\newcommand {\R}	{\mathbb{R}}
\newcommand {\N}	{\mathbb{N}}
\DeclareMathOperator{\Id}{Id}
\newcommand{\rW}{\mathrm{W}}
\newcommand{\rH}{\mathrm{H}}
\newcommand{\rL}{\mathrm{L}}
\newcommand{\eps}{\varepsilon}
\renewcommand{\epsilon}{\varepsilon}
\newcommand{\dbullet}{\partial^{\bullet}}
\newcommand{\GammaX}{\Gamma[X]}
\newcommand{\nablaGX}{\nabla_{\GammaX}}
\newcommand{\intGX}{\int_{\GammaX}}
\newcommand{\bx}{{\mathbf x}}
\newcommand{\by}{{\mathbf y}}
\newcommand{\bn}{{\mathbf n}}
\newcommand{\bu}{{\mathbf u}}
\newcommand{\bw}{{\mathbf w}}
\newcommand{\bz}{{\mathbf z}}
\newcommand{\bg}{{\mathbf g}}
\newcommand{\bM}{{\mathbf M}}
\newcommand{\bA}{{\mathbf A}}
\newcommand{\bK}{{\mathbf K}}
\newcommand{\Gammahx}{\Gamma_h[\bx]}
\newcommand{\Gammahtheta}{\Gamma_h^\theta}
\newcommand{\nablaGhx}{\nabla_{\Gamma_h[\bx]}}
\newcommand{\intGhx}{\int_{\Gamma_h[\bx]}}
\newcommand{\intGhtheta}{\int_{\Gamma_h^\theta}}
\newcommand{\ddtheta}{\frac{\d}{\d\theta}}
\newcommand\bfd{{\mathbf d}}
\newcommand\bfe{{\mathbf e}}
\newcommand\bff{{\mathbf f}}
\newcommand\bfg{{\mathbf g}}
\newcommand\bfn{{\mathbf n}}
\newcommand\bfr{{\mathbf r}}
\newcommand\bfu{{\mathbf u}}
\newcommand\bfv{{\mathbf v}}
\newcommand\bfw{{\mathbf w}}
\newcommand\bfx{{\mathbf x}}
\newcommand\bfy{{\mathbf y}}
\newcommand\bfz{{\mathbf z}}
\newcommand\bfA{{\mathbf A}}
\newcommand\bfH{{\mathbf H}}
\newcommand\bfK{{\mathbf K}}
\newcommand\bfM{{\mathbf M}}
\newcommand\bfV{{\mathbf V}}
\newcommand\bfW{{\mathbf W}}
\newcommand\qand{\quad\hbox{ and }\quad}
\renewcommand{\d}{\text{d}}
\newcommand{\Ga}{\Gamma}
\newcommand{\mat}{\partial^{\bullet}}
\newcommand{\diff}{\frac{\d}{\d t}}
\newcommand{\inv}{^{-1}}
\newcommand{\la}{\langle}
\newcommand{\nb}{\nabla}
\newcommand{\pa}{\partial}
\newcommand{\ra}{\rangle}
\newcommand{\spn}{\textnormal{span}}
\newcommand{\st}{such that}
\def \t {(t)}
\def \to {\rightarrow}
\newcommand{\vphi}{\varphi}
\def \nu {\textnormal{n}}
\newcommand{\phiv}{\varphi^v}
\newcommand{\phin}{\varphi^\n}
\newcommand{\phiV}{\varphi^V}
\newcommand{\Co}{C_0}
\newcommand{\wtu}{\widetilde{\bfu}}
\newcommand{\wtx}{\widetilde{\bfx}}
\newcommand{\us}{\bfu^\ast}
\newcommand{\vs}{\bfv^\ast}
\newcommand{\xs}{\bfx^\ast}
\newcommand{\uls}{\bfu_\ast}
\newcommand{\vls}{\bfv_\ast}
\newcommand{\xls}{\bfx_\ast}
\newcommand{\wtuls}{\widetilde{\bfu}_\ast}
\newcommand{\wtxls}{\widetilde{\bfx}_\ast}
\newcommand{\eu}{\bfe_\bfu}
\newcommand{\ev}{\bfe_\bfv}
\newcommand{\ex}{\bfe_\bfx}
\newcommand{\wteu}{{\bfe}_{\widetilde\bfu}}
\newcommand{\wtex}{{\bfe}_{\widetilde\bfx}}
\newcommand{\doteu}{\dot\bfe_\bfu}
\newcommand{\dotex}{\dot\bfe_\bfx}
\newcommand{\du}{\bfd_\bfu}
\newcommand{\dv}{\bfd_\bfv}
\newcommand{\dx}{\bfd_\bfx}
\newcommand{\ru}{\bfr_\bfu}
\newcommand{\n}{\nu}
\newcommand{\dof}{N}
\newcommand{\btb}{\color{black}}
\newcommand{\etb}{\color{black}}
\newcommand{\bbk}{\color{black}}
\newcommand{\ebk}{\color{black}}
\newcommand{\be}{\partial^\tau\!}
\newcommand{\Mxu}[1][n]{\bfM(\wtx^{#1},\wtu^{#1})}
\begin{document}

\title{A convergent finite element algorithm \\ for generalized mean curvature flows of closed surfaces}
% Short title for running heads:
\shorttitle{A convergent finite element algorithm for generalized mean curvature flows}

\author{%
	{\sc Tim Binz\thanks{Email: binz@mathematik.tu-darmstadt.de}} \\[2pt]
	Fachbereich Mathematik, Technische Universität Darmstadt, \\ 
	Schlossgartenstrasse 7, 64289 Darmstadt, Germany \\[6pt]
	{\sc and}\\[6pt]
	{\sc Bal\'{a}zs Kov\'{a}cs\thanks{Corresponding author. Email: kovacs@mathematik.uni-regensburg.de}} \\[2pt]
	Faculty of Mathematics, University of Regensburg,\\ 
	93040 Regensburg, Germany
}
% Short list of authors for running heads:
\shortauthorlist{T.~Binz and B.~Kov\'{a}cs}

\maketitle

\begin{abstract}
% Body of abstract:
	{An algorithm is proposed for generalized mean curvature flow of closed two-dimensional surfaces, which include inverse mean curvature flow, powers of mean and inverse mean curvature flow, etc. Error estimates are proven for semi- and full discretisations for the generalized flow.
	The algorithm proposed and studied here combines evolving surface finite elements, whose nodes determine the discrete surface, and linearly implicit backward difference formulae for time integration. The numerical method is based on a system coupling the surface evolution to non-linear second-order parabolic evolution equations for the normal velocity and normal vector.
	Convergence proof is presented in the case of finite elements of polynomial degree at least two and backward difference formulae of orders two to five. The error analysis combines stability estimates and consistency estimates to yield optimal-order $H^1$-norm error bounds for the computed surface position, velocity,
	normal vector, normal velocity, and therefore for the mean curvature. The stability analysis is performed in the matrix--vector formulation, and is independent of geometric arguments, which only enter the consistency analysis.
	Numerical experiments are presented to illustrate the convergence results, and also to report on monotone quantities, e.g.~Hawking mass for inverse mean curvature flow. Complemented by experiments for non-convex surfaces.}
% Keywords:
{generalized mean curvature flow, inverse mean curvature flow, $H^\alpha$-flow, optimal-order convergence, evolving surface finite elements, linearly implicit BDF methods, energy estimates, monotone quantities}
\end{abstract}

\section{Introduction}

In this paper we propose and prove convergence of a numerical method for the evolution of a two-dimensional closed surface $\Ga\t$ evolving under \emph{generalized mean curvature flow}. The velocity of the surface $\Ga\t$ is given by the velocity law:
\begin{equation}
\label{intro:velocity law}
v = - V(H) \nu_{\Ga\t} ,
\end{equation}
here is $H$ the mean curvature of the surface $\Ga\t$, $\nu_{\Ga\t}$ denotes the outward unit normal vector, (using the convention that the mean curvature of a sphere is positive), and $V$ is a given function. 

\pagebreak

Many notable geometric flows fit into this framework, in particular
\begin{center}
	\emph{mean curvature flow, see \cite{Huisken1984}, \\ 
		inverse mean curvature flow, see \cite{HuiskenPolden,HuiskenIlmanen}, \\
		powers of mean curvature flow, see \cite{Schulze_1,Schulze_2,Schulze_3,Schulze_diss}, \\ 
		and powers of inverse mean curvature flow, see \cite{Gerhardt_pIMCF,Scheuer_pIMCF}, \\ 
		as well as a logarithmic mean curvature flow, see \cite{AlessandroniSinestrari_nhMCF,Espin_nhMCF}, \\ 
		etc.}
\end{center} 
There are a few papers which prove theoretical results for the general flow \eqref{intro:velocity law}, see, e.g.~\cite{HuiskenPolden,AlessandroniSinestrari_nhMCF,Espin_nhMCF}.

\smallskip

The solution of these flows, and their properties as well, are of theoretical and modelling interest: For example, to prove interesting geometric inequalities, most notably, the weak solvability theory of inverse mean curvature flow was used to prove: the positive mass conjecture \cite{SchoenYau}, and the Riemannian Penrose inequality from general relativity \cite{HuiskenIlmanen}. On the modelling side, mean curvature flow is used for various purposes, see the references in \cite{MCF}, various types of inverse mean curvature flows are utilised in image processing \cite[equation (23) and Section~8]{AlvarezGuichardLionsMorel}, \cite[equation (15)]{AngenentSapiroTannenbaum}, and \cite{SapiroTannenbaum}, see as well \cite[]{MalladiSethian}.

A number of numerical methods have been proposed for the above flows. 
For \emph{surfaces} finite volume algorithms were introduced by \cite{diss_Pasch} for mean curvature and inverse mean curvature flow.  
A surface finite element based algorithm for the general flow \eqref{intro:velocity law} was proposed by Barrett, Garcke, and N\"urnberg in \cite{BGN2008}. For \emph{curves and networks} evolving according the general flow they had also proposed a finite element algorithm in \cite{BGN2007}. 
For \emph{inverse mean curvature flow} many algorithms have been proposed, which use an equivalent formulation based on a \emph{non-linear singular elliptic equation on an unbounded domain} derived by \cite{HuiskenIlmanen}. This problem represents a level-set formulation for the inverse mean curvature
flow. A finite element method for a regularized flow, based on the regularisation of the singular elliptic problem, was proposed and analysed by Feng, Neilan, and Prohl in \cite{FengNeilanProhl}. They also proved error estimates for the regularized problem.
Using the same approach, level-set finite element method was proposed for \emph{powers of mean curvature flow} by \cite{Kroener_levelsetFEM_pMCF_arxiv}, the method was analysed by Kr\"oner, Kr\"oner, and Kr\"oner in \cite{KroenerKroenerKroener_levelsetFEM_pMCF}, while convergence rates for the semi-discretisation were proven by \cite{Kroener_levelset_pMCF}, similarly, for \emph{inverse mean curvature flow} by \cite{Kroener_iMCf_pMCF}. 

However, to our knowledge, no convergence results have been proved for evolving surface finite element algorithms for any of the above generalized mean curvature flows. Also, to our knowledge, no convergence results are available for any algorithm for the \emph{generalized} mean curvature flow \eqref{intro:velocity law}.

The main goals of the present paper are:
\begin{center}
	\emph{To propose a finite element algorithm for the generalized mean curvature flow \eqref{intro:velocity law} of closed two-dimensional surfaces, and to prove optimal-order error estimates for the proposed algorithm, under minor conditions on the function $V$.}
\end{center} 

To achieve these goals, the key idea is to derive \emph{non-linear} parabolic evolution equations for the normal velocity $V = V(H)$ and the surface normal $\nu$ along the generalized mean curvature flow, \bbk under natural assumptions on $V$. \ebk In recent previous works for mean curvature flow \cite{MCF} and Willmore flow \cite{Willmore}, which motivate this approach, the analogous evolution equations for $H$ and $\nu$ were used. 
In \cite{HuiskenPolden,HuiskenIlmanen,HuiskenIlmanan_highreg} the authors have already derived evolution equations for the mean curvature $H$ and the surface normal $\nu$. The evolution equation for the normal velocity $V$ is first derived here, and until the present work it was not evident that the evolution equations for $V$ and $\nu$ form a closed system that does not involve further geometric quantities.

A new non-linearity appears with the time derivatives in the evolution equations for $V$ and $\nu$, but otherwise they are very similar to the evolution equations for $H$ and $\nu$ for mean curvature flow \cite{MCF}. This structural similarity enables us to use many results from \cite{MCF}, but there are substantial parts that require a careful and extended analysis.

The system coupling the non-linear evolution equations for the geometric variables, the velocity law \eqref{intro:velocity law}, and an ordinary differential equation (ODE) for the surface evolution is  discretized using evolving surface finite elements (of degree at least 2) and using linearly implicit backward difference formulae (of order 2 to 5), \bbk under a mild step size restriction. \ebk 

We will prove optimal-order $H^1$-norm semi- and fully discrete error estimates for the surface position $X$ and all variables $v, \nu, V$ (and hence also for $H$). The fully discrete convergence proof clearly separates the issues of stability and consistency, and holds under sufficient regularity assumptions on the solution of generalized mean curvature flow, which excludes the formation of singularities.

For proving convergence of the full discretisation, the main issue is to prove \emph{stability}, that is to bound the errors in terms of consistency defects and errors in the initial values. The stability proof (as for \cite{MCF} and \cite{Willmore}) is performed in the matrix--vector formulation, where the similarity of the coupled system for generalized mean curvature flow and mean curvature flow \cite{MCF} will become more apparent. The main difference -- and difficulty -- is estimating the solution-dependent mass matrix terms. A key step in the stability proof is to establish uniform-in-time $W^{1,\infty}$-norm error bounds for all variables, shown via $H^1$-norm error bounds using inverse estimates. Additionally, in order to estimate the terms with the non-linear mass matrix, these $W^{1,\infty}$-bounds are used to prove $h$-uniform upper and lower bounds for the approximation of the mean curvature and some related variables. 
Due to the mentioned structural similarity, most of the stability proof uses the same techniques as \cite[Proposition~10.1]{MCF}, it is based on energy estimates testing with the time-derivative of the errors, via \cite{Dahlquist} and \cite{NevanlinnaOdeh}. The terms involving the solution-dependent mass matrix are estimated using a new technical lemma and a solution-dependent norm equivalence result based on the mentioned upper and lower bounds. 

Consistency estimates, i.e.~bounding the defects occurring upon inserting appropriate projections of the exact solution into the method, are analogous to the same result for mean curvature flow \cite[Lemma~8.1]{MCF}, and we mainly focus on the differences due to the non-linearity.

The paper is organized as follows.
Section~\ref{section:generalized MCF} introduces some basic notations and geometric concepts and is mainly devoted to deriving the evolution equations for the normal velocity and the normal vector along the generalized mean curvature flow. The coupled non-linear system and its weak formulation, which serves as the basis of the algorithm, is presented here. 
Section~\ref{section:ESFEM} describes the evolving surface finite element semi-discretization, the matrix--vector formulation. We also discuss here the similarity of the matrix--vector formulation to that of mean curvature flow.
Section~\ref{section:BDF} describes the linearly implicit time discretisation.
Section~\ref{section:main results} states the main results of the paper: optimal-order semi- and fully discrete error bounds in the $H^1$-norm for the errors in all variables. %positions, velocity, normal velocity, normal vector and mean curvature.
In Section~\ref{section:stability-full} we prove the fully discrete stability result after presenting the required auxiliary results in Section~\ref{section:relating surfaces}. Section~\ref{section:consistency - full} contains the consistency analysis. In Section~\ref{section:fully discrete proof completed} we combine the results of the previous two sections to prove the fully discrete convergence theorem.
Section~\ref{section:numerics} presents numerical experiments illustrating and complementing our theoretical results: reporting on convergence tests, on numerical solutions for various flows also with non-convex initial surfaces, and on the behaviour of monotone quantities, e.g.~Hawking mass.

\section{Evolution equations for generalized mean curvature flow}
\label{section:generalized MCF}

\subsection{Basic notions and notation}
\label{subsection:basic notions}

We start by introducing some basic concepts and notations, taking this description almost verbatim from \cite{MCF}. 

We consider the evolving two-dimensional closed surface $\Gamma[X]\subset\R^3$ as the image
$$
	\Ga[X] = \Ga[X(\cdot,t)] = \{ X(p,t) \,:\, p \in \Ga^0 \},
$$
of a smooth mapping $X:\Ga^0\times [0,T]\to \R^3$ such that $X(\cdot,t)$ is an embedding for every $t$. Here, $\Ga^0$ is a smooth closed initial surface, and $X(p,0)=p$. 
In view of the subsequent numerical discretization, it is convenient to think of $X(p,t)$ as the position at time $t$ of a moving particle with label $p$, and of $\Ga[X] $ as a collection of such particles. 

The {\it velocity} $v(x,t)\in\R^3$ at a point $x=X(p,t)\in\Ga[X(\cdot,t)]$  equals
\begin{equation}
\label{eq:velocity ODE}
	\partial_t X(p,t)= v(X(p,t),t).
\end{equation}
For a known velocity field  $v$,  the position $X(p,t)$ at time $t$ of the particle with label $p$ is obtained by solving the ordinary differential equation \eqref{eq:velocity ODE} from $0$ to $t$ for a fixed $p$.

For a function $u(x,t)$ ($x\in \Gamma[X]$, $0\le t \le T$) we denote the {\it material derivative} (with respect to the parametrization $X$) as
$$
	\mat u(x,t) = \frac \d{\d t} \,u(X(p,t),t) \quad\hbox{ for } \ x=X(p,t).
$$

On any regular surface $\Gamma\subset\R^3$, we denote by $\nabla_{\Ga}u:\Gamma\to\R^3$ the  {\it tangential gradient} of a function $u:\Gamma\to\R$, and in the case of a vector-valued function $u=(u_1,u_2,u_3)^T:\Gamma\to\R^3$, we let
$\nabla_{\Ga}u=
(\nabla_{\Ga}u_1,
\nabla_{\Ga}u_2,
\nabla_{\Ga}u_3)$. We thus use the convention that the gradient of $u$ has the gradient of the components as column vectors. 
We denote by $\nabla_{\Ga} \cdot f$ the {\it surface divergence} of a vector field $f$ on $\Gamma$, 
and by %$\varDelta_{\Ga[X]} u=\nabla_{\Ga[X]}\cdot \nabla_{\Ga[X]}u$
$\varDelta_{\Ga} u=\nabla_{\Ga}\cdot \nabla_{\Ga}u$ the {\it Laplace--Beltrami operator} applied to $u$; see the review \cite{DeckelnickDE2005} or \cite[Appendix~A]{Ecker2012} or any textbook on differential geometry for these notions. 
%{\color{blue}If $f$ is a general vector field  on $\Gamma$, then we define its surface divergence as 
%$$\nabla_{\Ga} \cdot f:=\nabla_{\Ga} \cdot P_{\rm tan}f ,$$ where $P_{\rm tan}f$ denotes the tangential component of $f$ at each point of the surface.} 

We denote the unit outer normal vector field to $\Gamma$ by $\n:\Gamma\to\R^3$. Its surface gradient contains the (extrinsic) curvature data of the surface $\Gamma$. At every $x\in\Gamma$, the matrix of the extended Weingarten map,
$$
	A(x)=\nabla_\Gamma \n(x),
$$ 
is a symmetric $3\times 3$ matrix (see, e.g., \cite[Proposition~20]{Walker2015}). Apart from the eigenvalue $0$ (with eigenvector $\n$), its other two eigenvalues are the principal curvatures $\kappa_1$ and $\kappa_2$. They determine the fundamental quantities
\begin{align}\label{Def-H-A2}
	H:={\rm tr}(A)=\kappa_1+\kappa_2, \qquad |A|^2 = \kappa_1^2 +\kappa_2^2 ,
\end{align}
where $|A|$ denotes the Frobenius norm of the matrix $A$.
Here, $H$ is called the {\it mean curvature} (as in most of the literature, we do not put a factor 1/2).

\subsection{Evolution equations for normal vector and normal velocity of a surface evolving under generalized mean curvature flow}

The velocity law of the \emph{generalized} mean curvature flow is given by
\begin{equation}
\label{eq:velocity law}
	v = - V(H) \nu ,
\end{equation}
where $V = V(H)$ denotes the normal velocity of the surface depending on the mean curvature $H$.

It is important to observe that \eqref{eq:velocity law} includes many classical surface flows (with non-exhaustive reference lists):
\begin{alignat*}{3}
	V(H) = &\ H , & \quad & \text{mean curvature flow, see \cite{Huisken1984}} , \\
	V(H) = &\ - \frac{1}{H} , & \quad & \text{inverse mean curvature flow,} \\[-3mm]
	&&&\text{see \cite{HuiskenPolden,HuiskenIlmanen}} , \\
	V(H) = &\ H^\alpha , & \quad & \text{powers of mean curvature ($\alpha > 0$), see  \cite{Schulze_diss,Schulze_1,Schulze_2,Schulze_3}} , \\
	V(H) = &\ -\frac{1}{H^\alpha} , & \quad & \text{powers of inverse mean curvature ($\alpha > 0$),} \\[-3mm]
	&&&\text{see  \cite{Gerhardt_pIMCF,Scheuer_pIMCF}} , \\
	V(H) = &\ \frac{H + \widetilde{H}}{\log{(H + \widetilde{H})}} , & \quad & \text{a non-homogeneous mean curvature flow ($\widetilde{H} > 0$),} \\[-4.5mm]
	&&&\text{see \cite{AlessandroniSinestrari_nhMCF,Espin_nhMCF}} , \\
	\text{etc.}&&&
\end{alignat*}

%\redon [kb: Check the properties for the fifth function (depending on the choice of $\widetilde{H}$)!] \redoff 
% https://www.wolframalpha.com/input/?i=plot+%28x%2B1%29%2Flog%28x%2B1%29%2C+x+%3D+1..3

Throughout the paper we will assume that along the flow the mean curvature satisfies, for $0 \leq t \leq T$ and $x \in \Ga[X(\cdot,t)]$,
\begin{equation}
\label{eq:bounds on mean curvature}
	0 < H_0 \leq H(x,t) \leq H_1 \qquad \text{with constants} \quad H_0, H_1 > 0 .
\end{equation}

We assume that, \bbk for an interval $I$ (depending on the problem), \ebk 
\begin{equation}
\label{eq:V assumptions}
	\text{\bbk $V  : \R_+ \to I$ is a smooth and strictly monotone increasing bijection, \ebk }
\end{equation}
hence it has -- in particular -- the properties:
\begin{equation}
\label{eq:properties of V}
	\begin{aligned}
		& \, V \text{ is invertible such that its inverse } V^{-1}:I \to \R_+ \text{ is (locally) Lipschitz, and} \\
		& \, \text{its derivative } V' \text{ is everywhere positive and locally Lipschitz} . 	
	\end{aligned}
\end{equation}
\bbk We note here that the interval $I$ should suitably include the image of $[H_0,H_1]$, e.g.~$I = [V(H_0/c),V(c\,H_1)]$ with a suitable factor $c > 1$. \ebk 
Similar assumptions were made, e.g., in \cite{Espin_nhMCF}.

\begin{remark}
	\label{remark:strict parabolicity}
	We note here that a positive lower bound on the mean curvature of the initial surface is usually ensured by assuming its strict convexity (i.e.~the principal curvatures $\kappa_i$ are all positive). In general such an assumption is not restrictive, since assuming strict convexity of the initial surface is necessary to show the parabolicity of the evolution equations and, hence, existence results for the exemplary generalized mean curvature flows above. This property is conserved along the flow for these problems. See, e.g., \cite[Theorem~3.1]{Huisken1984} for mean curvature flow, \cite[Theorem~3.1]{HuiskenPolden} for inverse mean curvature flow, \cite[Theorem~1.1]{Schulze_diss}, \cite[Theorem~1.1]{Schulze_1} for the $H^\alpha$-flow, \cite[Theorem~1.2]{Gerhardt_pIMCF} for the $H^{-\alpha}$-flow, and \cite[Theorem~1]{AlessandroniSinestrari_nhMCF} for the non-homogeneous flow. (In some of these theorems only weak convexity ($\kappa_i \geq 0$) is needed.) 
	It is worth to note here that the role of the (sufficiently large) constant $\widetilde{H}$ is exactly to ensure that the function $(H + \widetilde{H}) / \log{(H + \widetilde{H})}$ preserves this property of the flow, in contrast to the flow with $H/\log(H)$, cf.~\cite{AlessandroniSinestrari_nhMCF}.
\end{remark}

The normal velocity $V = V(H)$ and the normal vector $\nu$ from \eqref{eq:velocity law} satisfy the following non-linear evolution equations along the generalized mean curvature flow \eqref{eq:velocity law}.

%\pagebreak

\begin{lemma}
\label{lemma:evolution equations}
	Assume that the function $V$ satisfies \eqref{eq:V assumptions}. 
	For a regular surface $\Ga[X]$ moving under \emph{generalized} mean curvature flow, satisfying \eqref{eq:bounds on mean curvature} , the normal vector $\nu$ and the normal velocity $V:=V(H)$ satisfy the following non-linear strictly parabolic evolution equations:
	\begin{align}
		\label{eq:evolution eq - normal}
		\frac{1}{V'(H)} \, \mat \nu &= \Delta_{\Gamma[X]} \nu + |A|^2 \nu, \\
		\label{eq:evolution eq - V}
		\frac{1}{V'(H)} \, \mat V &= \Delta_{\Gamma[X]} V  + |A|^2 V .
	\end{align} 
\end{lemma}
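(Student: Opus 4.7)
My plan is to reduce both identities to two well-known evolution formulas valid along any flow with signed normal speed $v_N := v\cdot\nu$, namely
\[
  \mat\nu \,=\, -\nabla_{\Gamma[X]} v_N \qquad\text{and}\qquad \mat H \,=\, -\Delta_{\Gamma[X]} v_N \,-\, |A|^2 v_N,
\]
as derived in Huisken--Polden \cite{HuiskenPolden} and recalled for mean curvature flow in \cite{MCF}. Plugging in $v_N = -V(H)$ from the velocity law \eqref{eq:velocity law} immediately yields the intermediate identities
\[
  \mat\nu \,=\, \nabla_{\Gamma[X]} V(H), \qquad \mat H \,=\, \Delta_{\Gamma[X]} V(H) \,+\, |A|^2 V(H).
\]

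To obtain \eqref{eq:evolution eq - normal}, I would apply the chain rule $\nabla_{\Gamma[X]} V(H) = V'(H)\,\nabla_{\Gamma[X]} H$ and combine it with the classical identity
\[
  \Delta_{\Gamma[X]} \nu \,=\, \nabla_{\Gamma[X]} H \,-\, |A|^2 \nu,
\]
which follows from the symmetry of the Weingarten map $A = \nabla_{\Gamma[X]}\nu$ and $H = \mathrm{tr}(A)$ (see, e.g., \cite{DeckelnickDE2005}). Together these produce $\mat\nu = V'(H)\bigl(\Delta_{\Gamma[X]}\nu + |A|^2\nu\bigr)$, and dividing by $V'(H)$ gives \eqref{eq:evolution eq - normal}. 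Assumptions \eqref{eq:V assumptions} and \eqref{eq:bounds on mean curvature} guarantee that $V'(H) > 0$ is bounded above and below by positive constants on the range of $H$, which makes the division legitimate and renders the equation strictly parabolic.

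Equation \eqref{eq:evolution eq - V} -- the genuinely new identity here -- follows from a second application of the chain rule: $\mat V(H) = V'(H)\,\mat H$. Substituting the intermediate formula for $\mat H$ from the first step and dividing through by $V'(H)$ yields exactly \eqref{eq:evolution eq - V}. The observation worth highlighting is that, after this substitution, the right-hand side depends only on $V$ and the purely geometric quantities $|A|^2$ and $\Delta_{\Gamma[X]}$; the mean curvature $H$ re-enters only through the non-linear coefficient $1/V'(H) = 1/V'(V^{-1}(V))$, which by \eqref{eq:properties of V} is smooth and positive on $I$. Thus the pair $(\nu,V)$ closes into a coupled strictly parabolic system without reintroducing $H$ as an independent variable. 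I do not anticipate a serious obstacle: the only delicate point is keeping $V^{-1}$ and $V'$ under control along the flow so that the non-linear mass-type coefficient $1/V'(H)$ is uniformly positive, which is precisely the role of \eqref{eq:bounds on mean curvature} and \eqref{eq:V assumptions}.
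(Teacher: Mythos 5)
Your proof is correct and follows essentially the same route as the paper: you derive the intermediate identities $\mat\nu = \nabla_{\Gamma[X]} V(H)$ and $\mat H = \Delta_{\Gamma[X]} V(H) + |A|^2 V(H)$ from the classical normal-speed evolution formulas, apply the chain rule and the identity $\nabla_{\Gamma[X]} H = \Delta_{\Gamma[X]}\nu + |A|^2\nu$, and divide by $V'(H)>0$, which is exactly the paper's argument (phrased there as a direct adaptation of Huisken's Lemma~3.3 and Theorem~3.4/Corollary~3.5).
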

\begin{proof}
	By using the normal velocity in the proof of \cite[Lemma~3.3]{Huisken1984}, or see also \cite{Ecker2012}, \cite[Lemma~2.37]{BGN_survey}, the following evolution equation for the normal vector holds: 
	$$
	\mat \n = \nb_{\Ga[X]} \big( V(H) \big) =  V'(H) \nb_{\Ga[X]} H,  
	$$
	by the velocity law \eqref{eq:velocity law} and using the chain rule.
	On any surface $\Gamma$, it holds true that (see \cite[(A.9)]{Ecker2012} or \cite[Proposition~24]{Walker2015}):
	$$
	\nabla_{\Ga[X]} H = \Delta_{\Ga[X]} \n + |A|^2 \n .
	$$
	This, in combination with the previous equation and noting that, by \eqref {eq:properties of V}, via the monotonicity of $V$ it follows that $V'(H) > 0$. Dividing both sides by $V'(H)$ then gives the stated evolution equation for $\nu$.
	
	By revising the proof of \cite[Theorem~3.4 and Corollary~3.5]{Huisken1984}, or see \cite{Ecker2012}, \cite[Lemma~2.39]{BGN_survey}, with the normal velocity $V$ we obtain 
	\begin{equation*}
	\mat H = \Delta_{\Ga[X]} \big( V(H) \big) + |A|^2 V(H) ,
	\end{equation*}
	which, again by the chain rule for $\mat V = \mat\big(V(H)\big)$, and dividing by $V'(H) > 0$ again, finally yields the evolution equation for $V=V(H)$.
\end{proof}

%\textcolor{magenta}{Hier könnte man bereits anmerken, dass man große Teile des Beweises direkt übernehmen kann... Dann ist von Anfang an klar wie eng dieses Problem mit dem MCF verwandt ist... \\ kb: Gute Idee!  Dass sollen wir bei (fast) jede m\"oglichkeit tun. \\ Habe eine Vorschlag geschrieben, und auch eine \"ahnliche Vergleich beim (eine andere gute m\"oglichkeit) Matrix--Vektor Fromulierung.}

It is instructive to relate the evolution equations for \emph{generalized} mean curvature flow, Lemma~\ref{lemma:evolution equations}, with those for \emph{standard} mean curvature flow, \cite{Huisken1984}, or \cite[Lemma~2.1]{MCF}. In particular, we point out that the right-hand sides of \eqref{eq:evolution eq - normal} and \eqref{eq:evolution eq - V} are formally the same (with $V$ instead of $H$) as the right-hand sides of \cite[equations~(2.4) and (2.5)]{MCF}:
\begin{align*}
\mat \nu &= \Delta_{\Gamma[X]} \nu + |A|^2 \nu, \\
\mat H &= \Delta_{\Gamma[X]} H  + |A|^2 H .
\end{align*}
The only differences are the non-linear factors on the left-hand sides. This structural similarity already suggests that the approach and numerical analysis presented in \cite{MCF} can be extended to the generalized mean curvature flow, but will require modifications treating the terms involving the non-linear weights in front of the material derivatives.

In this paper, we will address these modifications, and extend the stability and convergence analysis of \cite{MCF} for the generalized mean curvature flow \eqref{eq:velocity law}.

\subsection{The evolution equation system for generalized mean curvature flow}

The evolution of a surface under generalized mean curvature flow is then governed by the coupled system \eqref{eq:velocity law}, \eqref{eq:evolution eq - normal}--\eqref{eq:evolution eq - V} together with the ODE \eqref{eq:velocity ODE}. 
The numerical method is based on the weak form of the above coupled system which reads, denoting $|A|^2 = |\nablaGX \nu|^2$ and $V' = V'(H)$ (with $H$ obtained by inverting $V=V(H)$):
\begin{subequations}
	\label{evolutioneqs-weak}
	\begin{align}
	\label{evolutioneqs-weak a}
	\intGX \nablaGX v \cdot \nablaGX \vphi^v + \intGX v \cdot \vphi^v  
	&= - \intGX  \nablaGX (V\nu) \cdot \nablaGX \vphi^v - \intGX V \nu \cdot \vphi^v , \\
	\intGX \frac{1}{V'} \, \dbullet \nu \cdot \vphi^\nu 
	&= - \intGX \nablaGX \nu \cdot \nablaGX \vphi^\nu + \intGX |A|^2 \nu \cdot \vphi^\nu , \\
	\intGX \frac{1}{V'} \, \dbullet V \vphi^V 
	&= - \intGX \nablaGX V \cdot \nablaGX \vphi^V  
	+ \intGX |A|^2 V \vphi^V,
	\end{align}
\end{subequations}
for all test functions $\phiv \in H^1(\Ga[X])^3$ and $\phin \in H^1(\Ga[X])^3$, $\phiV \in H^1(\Ga[X])$, together with the ODE for the positions \eqref{eq:velocity ODE}. 
This system is complemented with the initial data $X^0$, $\n^0$ and $V^0 = V(H^0)$.

For simplicity, by $\, \cdot \,$ we denote both the Euclidean scalar product for vectors, and the Frobenius inner product for matrices (i.e., the Euclidean product with an arbitrary vectorisation).

\section{Evolving finite element semi-discretization}
\label{section:ESFEM}

%\bbk [kb: There are much more in the comments. If something is missing, it might be already there.] \ebk 

\subsection{Evolving surface finite elements}
We formulate the evolving surface finite element (ESFEM) discretization for the velocity law coupled with evolution equations on the evolving surface, following (almost verbatim) the description in \cite{KLLP2017,MCF}, which is based on \cite{Dziuk88,DziukElliott_ESFEM,Demlow2009,Kovacs2018}. We use simplicial finite elements and continuous piecewise polynomial basis functions of degree~$k$, as defined in \cite[Section 2.5]{Demlow2009}.

We triangulate the given smooth initial surface $\Gamma^0$ by an admissible family of triangulations $\mathcal{T}_h$ of decreasing maximal element diameter $h$; see \cite{DziukElliott_ESFEM} for the notion of an admissible triangulation, which includes quasi-uniformity and shape regularity. For a momentarily fixed $h$, we denote by $\bfx^0 $  the vector in $\R^{3\dof}$ that collects all nodes $p_j$ $(j=1,\dots,\dof)$ of the initial triangulation. By piecewise polynomial interpolation of degree $k$, the nodal vector defines an approximate surface $\Gamma_h^0$ that interpolates $\Gamma^0$ in the nodes $p_j$. We will evolve the $j$th node in time, denoted $x_j(t)$ with $x_j(0)=p_j$, and collect the nodes at time $t$ in a column vector% in $\R^{3\dof}$,
$$
\bfx(t) \in \R^{3\dof}. %= (x_1(t),\dots,x_\dof(t)) ^T 
$$
We just write $\bfx$ for $\bfx(t)$ when the dependence on $t$ is not important.

By piecewise polynomial interpolation on the  plane reference triangle that corresponds to every
curved triangle of the triangulation, the nodal vector $\bfx$ defines a closed surface denoted by $\Gamma_h[\bfx]$. We can then define globally continuous finite element {\it basis functions}
$$
\phi_i[\bfx]:\Gamma_h[\bfx]\to\R, \qquad i=1,\dotsc,\dof,
$$
which have the property that on every triangle their pullback to the reference triangle is polynomial of degree $k$, and which satisfy at the nodes $\phi_i[\bfx](x_j) = \delta_{ij}$ for all $i,j = 1,  \dotsc, \dof .$
These functions span the finite element space on $\Gamma_h[\bfx]$,
\begin{equation*}
S_h[\bfx] = S_h(\Gamma_h[\bfx])=\spn\big\{ \phi_1[\bfx], \phi_2[\bfx], \dotsc, \phi_\dof[\bfx] \big\} .
\end{equation*}
For a finite element function $u_h\in S_h[\bfx]$, the tangential gradient $\nabla_{\Gamma_h[\bfx]}u_h$ is defined piecewise on each element.

The discrete surface at time $t$ is parametrized by the initial discrete surface via the map $X_h(\cdot,t):\Gamma_h^0\to\Gamma_h[\bfx(t)]$ defined by
$$
X_h(p_h,t) = \sum_{j=1}^\dof x_j(t) \, \phi_j[\bfx(0)](p_h), \qquad p_h \in \Gamma_h^0,
$$
which has the properties that $X_h(p_j,t)=x_j(t)$ for $j=1,\dots,\dof$, that  $X_h(p_h,0) = p_h$ for all $p_h\in\Gamma_h^0$, and
$$
\Gamma_h[\bfx(t)]=\Gamma[X_h(\cdot,t)] = \{ X_h(p_h,t) \mid p_h \in \Ga_h^0 \}. 
$$

The {\it discrete velocity} $v_h(x,t) \in \R^3$ at a point $x=X_h(p_h,t) \in \Gamma[X_h(\cdot,t)]$ is given by
$$
\partial_t X_h(p_h,t) = v_h(X_h(p_h,t),t).
$$
In view of the transport property of the basis functions  \cite{DziukElliott_ESFEM},
$
\tfrac\d{\d t} \big( \phi_j[\bfx(t)](X_h(p_h,t)) \big) =0 ,
$
%which by integration from $0$ to $t$ yields
%$$
%	\phi_j[\bfx(t)](X_h(p_h,t)) = \phi_j[\bfx(0)](p_h).
%$$
the discrete velocity equals, for $x \in \Gamma_h[\bfx(t)]$,
$$
v_h(x,t) = \sum_{j=1}^\dof v_j(t) \, \phi_j[\bfx(t)](x) \qquad \hbox{with } \ v_j(t)=\dot x_j(t),
$$
where the dot denotes the time derivative $\d/\d t$. 
Hence, the discrete velocity $v_h(\cdot,t)$ is in the finite element space $S_h[\bfx(t)]$, with nodal vector $\bfv(t)=\dot\bfx(t)$.
%Although it would be possible to define the basis functions using the formula $\phi_j[\bfx\t](X_h(p_h,t)) = \phi_j[\bfx(0)](p_h)$ (obtained by integrating the transport property), such a formulation would involve pullbacks, which are avoided here.
%
%??? $X_h$ ist ja mittels der Basisfunktionen definiert. Ich wuerde das weglassen. ???
%

The {\it discrete material derivative} of a finite element function $u_h(x,t)$ with nodal values $u_j(t)$ is
$$
\mat_h u_h(x,t) = \frac{\d}{\d t} u_h(X_h(p_h,t)) = \sum_{j=1}^\dof \dot u_j(t)  \phi_j[\bfx(t)](x)  \quad\text{at}\quad x=X_h(p_h,t).
$$

%We will determine an approximate normal vector $\n_h$ and an approximate mean curvature $H_h$ as finite element functions: for $x \in \Gamma_h[\bfx(t)]$,
%\begin{align*}
%	\n_h(x,t) &= \sum_{j=1}^\dof \n_j(t) \, \phi_j[\bfx(t)](x) , \\
%	H_h(x,t) &= \sum_{j=1}^\dof H_j(t) \, \phi_j[\bfx(t)](x).
%\end{align*}
%Note that these finite element functions are {\it not} the normal vector and the mean curvature of the discrete surface $\Gamma_h[\bfx(t)]$. 
%For a curved triangle $E_h\subset\Gamma_h[\bfx]$, let $\varphi_{_{E_h}}: E\rightarrow E_h$ denote the one-to-one and onto map (a polynomial of degree $k$) defined on the reference triangle $\widetilde E$. 

\subsection{ESFEM spatial semi-discretizationss}
\label{subsection:semi-discretization}

Now we will describe the semi-discretization of the coupled system for generalized mean curvature flow.

The finite element spatial semi-discretization of the weak coupled parabolic system \eqref{evolutioneqs-weak} reads as follows: Find the unknown nodal vector $\bfx(t)\in \R^{3\dof}$ and the unknown finite element functions $v_h(\cdot,t)\in S_h[\bfx(t)]^3$ and $\nu_h(\cdot,t)\in S_h[\bfx(t)]^3$, and $V_h(\cdot,t) \in S_h[\bfx(t)]$ such that, by denoting $|A_h|^2 = | \nb_{\Ga_h[\bfx]} \n_h |^2$ and $V_h' = V'(H_h)$ (with $H_h$ obtained by inverting $V_h=V(H_h)$),
\begin{subequations}
	\label{semidiscretization}
	\begin{align}
	\label{semidiscretization: v}
		\intGhx \nablaGhx v_h \cdot \nablaGhx \vphi^v_h 
		+ \intGhx v_h \cdot \vphi^v_h 
		&= - \intGhx \nablaGhx (V_h \nu_h) \cdot \nablaGhx \vphi^v_h
		- \intGhx V_h \nu_h \cdot \vphi^v_h, \\
		\intGhx \frac{1}{V_h'} \, \dbullet_h \nu_h \cdot \vphi^\nu_h
		&= - \intGhx \nablaGhx \nu_h \cdot \nablaGhx \vphi^\nu_h 
		+ \intGhx |A_h|^2 \nu_h \cdot \vphi^\nu_h, \\
		\intGhx \frac{1}{V_h'} \, \dbullet_h V_h \vphi^V_h 
		&= - \intGhx \nablaGhx V_h \cdot \nablaGhx \vphi^V_h 
		+ \intGhx |A_h|^2 V_h \vphi^V_h .
	\end{align}
\end{subequations}
for all $\vphi^v_h \in S_h[\bfx(t)]^3$, $\vphi^\nu_h \in S_h[\bfx(t)]^3$, and $\vphi^V_h\in S_h[\bfx(t)]$, with the surface $\Gamma_h[\bfx(t)]=\Gamma[X_h(\cdot,t)] $ given by the differential equation
\begin{equation}
\label{eq:xh}
\partial_t X_h(p_h,t) = v_h(X_h(p_h,t),t), \qquad p_h\in\Ga_h^0.
\end{equation}
The initial values for the nodal vector $\bfx$ are taken as the positions of the nodes of the triangulation of the given initial surface $\Gamma^0$.
The initial data $\n_h^0$ and $V_h^0=V(H_h^0)$ are determined by Lagrange interpolation of $\n^0$ and $H^0$. 

\subsection{Matrix--vector formulation}
\label{section:matrix-vector form}

The nodal values of the unknown semi-discrete functions are collected into column vectors  $\bfv=(v_j) \in \R^{3N}$, $\bfn=(\n_j) \in \R^{3N}$, and $\bfV=(V_j)\in\R^N$. We furthermore collect
\begin{equation*}
	\bu := \begin{pmatrix} \bn \\ \bfV \end{pmatrix} \in \R^{4N} .
\end{equation*}

We define the surface-dependent mass matrix $\bfM(\bfx)$ and stiffness matrix $\bfA(\bfx)$, as well as the solution-dependent mass matrix $\bfM(\bfx,\bfu)$:
\begin{align*}
	\bfM(\bx)|_{ij} = \intGhx \phi_i[\bfx] \phi_j[\bfx] \qand \bA(\bx)|_{ij} = \intGhx \nablaGhx \phi_i[\bfx] \cdot \nablaGhx \phi_j[\bfx] , 
\end{align*}
\begin{equation}
\label{eq:solution-dependent mass matrix}
	\bM(\bx,\bfu)|_{ij} = \intGhx \frac{1}{V_h'} \, \phi_i[\bfx] \phi_j[\bfx] , 
\end{equation}
for $i,j = 1,  \dotsc,N$.
The non-linear terms $\bff(\bx,\bu) = \big(\bff_1(\bx,\bu) , \bff_2(\bx,\bu)\big)^T$ and $\bg(\bx,\bu)$ are defined by
\begin{align*}
	\bff_1(\bx,\bu)|_{j+(l-1)N} &= \intGhx |A_h|^2 \, (\nu_h)_l \phi_j[\bfx] , \\
	\bff_2(\bx,\bu)|_{j} &= \intGhx |A_h|^2 V_h \phi_j[\bfx] , \\
	\bfg(\bx,\bu)|_{j + (l-1)N} &= - \intGhx V_h \, (\nu_h)_l \phi_j[\bfx]
	- \intGhx \nablaGhx (V_h \, (\nu_h)_l) \cdot \nablaGhx \phi_j[\bfx]  ,
\end{align*}
for $j = 1, \dotsc, N$ and $\ell=1,2,3$. We recall that $|A_h|^2 = |\nablaGhx \nu_h|^2$ and $V_h' = V'(H_h)$, with $H_h$ obtained by inverting $V_h=V(H_h)$. 

We further let, for $d \in \N$ (with the identity matrices $I_d \in \R^{d \times d}$) 
$$
	\bfM^{[d]}(\bfx)= I_d \otimes \bfM(\bfx), \qquad
	\bfA^{[d]}(\bfx)= I_d \otimes \bfA(\bfx), \qquad
	\bfK^{[d]}(\bfx) = I_d \otimes \bigl( \bfM(\bfx) + \bfA(\bfx) \bigr) ,
$$
and similarly for $\bfM(\bfx,\bfu)$. 
When no confusion can arise, we will write $\bfM(\bfx)$ for $\bfM^{[d]}(\bfx)$, $\bfM(\bfx,\bfu)$ for $\bfM^{[d]}(\bfx,\bfu)$, $\bfA(\bfx)$ for $\bfA^{[d]}(\bfx)$, and $\bfK(\bfx)$ for $\bfK^{[d]}(\bfx)$.

Using these definitions \eqref{semidiscretization} with \eqref{eq:xh} can be written in the matrix--vector form:
\begin{subequations}
	\label{eq:matrix-vector form}
	\begin{align}
	\label{eq:matrix-vector form - velocity law}
	\bK^{[3]}(\bx) \bfv &= \bg(\bx,\bu), \\
	\bM^{[4]}(\bx,\bfu) \dot{\bu} + \bA^{[4]}(\bx) \bu &= \bff(\bx,\bu), \\
	\intertext{with \eqref{eq:xh} equivalent to}
	\dot \bfx &= \bfv .
	\end{align} 
\end{subequations}

We now compare the above matrix--vector formulation \eqref{eq:matrix-vector form} for \emph{generalized} mean curvature flow, to the same formulas for \emph{standard} mean curvature flow \cite[equation~(3.4)--(3.5)]{MCF}:
\begin{equation}
\label{eq:MCF matrix-vector form}
\begin{aligned}
\bK^{[3]}(\bx) \bfv &= \bg(\bx,\bu), \\
\bM^{[4]}(\bx) \dot{\bu} + \bA^{[4]}(\bx) \bu &= \bff(\bx,\bu), \\
\text{with} \qquad 
\dot \bfx &= \bfv .
\end{aligned} 
\end{equation}
The two formulations are formally the same, the only difference is the solution-dependent mass matrix $\bfM(\bfx,\bfu)$ in the term with a time derivative of $\bfu$, which in the case of mean curvature flow is simply $\bfM(\bfx)$. 
(Also note that here $\bfu$ collects $\bfn$ and $\bfV$, whereas for mean curvature flow $\bfu = (\bfn,\bfH)^T$.) 
The stability proof presented in \cite[Section~10]{MCF} will therefore be generalized below to accommodate the use of solution-dependent mass matrices, but we will exploit the similarities of the two problems as much as possible. We will also extend here the estimates of \cite[Section~7.1]{MCF}, relating different finite element surfaces, to the solution-dependent case.

\begin{remark}
	\label{remark:pointwise discrete velocity law}
	Instead of enforcing the velocity law \eqref{eq:velocity law} via the Ritz projection \eqref{semidiscretization: v}, in \cite{Willmore} the velocity law is enforced using the nodal finite element interpolation. That is \eqref{eq:matrix-vector form - velocity law} is replaced by
	\begin{equation}
	\label{eq:pointwise discrete velocity law}
	\bfv = - \bfV \bullet \bfn , \qquad \text{with} \qquad \big(\bfV \bullet \bfn\big)_j = \bfV_j \, \bfn_j \quad (j=1,\dotsc,N) .
	\end{equation}
	
	The stability proof requires an $H^1$-norm stability for the velocity law, which is rather straightforward for the Ritz projection \cite{MCF}, while for the above pointwise velocity law it is shown in \cite[Part~(B) of Proposition~5.1]{Willmore}.
\end{remark}

\subsection{Lifts}
\label{section:lifts}

%\bbk [kb: Again more in the comments.] \ebk 

As in \cite{KLLP2017} and \cite[Section~3.4]{MCF}, we compare functions on the {\it exact surface} $\Gamma[X(\cdot,t)]$ with functions on the {\it discrete surface} $\Gamma_h[\bfx(t)]$, via functions on the {\it interpolated surface} $\Gamma_h[\xs(t)]$, where
$\xs(t)$ denotes the nodal vector collecting the grid points $x_j^*(t)=X(p_j,t)$ on the exact surface, where $p_j$ are the nodes of the discrete initial triangulation $\Ga_h^0$.

Any finite element function $w_h:\Ga_h[\bfx] \to \R$ on the discrete surface, with nodal values $w_j$, is associated with a finite element function $\widehat w_h$ on the interpolated surface $\Ga_h[\xs]$ with the exact same nodal values. 
This can be further lifted to a function on the exact surface by using the \emph{lift operator} $\,^\ell$, mapping a function on the interpolated surface $\Ga_h[\xs]$ to a function on the exact surface $\Gamma$,
\bbk via the identity, for $x \in \Ga_h[\xs]$,
\begin{equation*}
	x^\ell = x - d(x,t) \nu_{\Ga[X]}(x^\ell,t) , \qquad \text{and setting} \qquad \widehat w_h^\ell(x^\ell) = \widehat w_h(x) ,
\end{equation*}
using the distance function $d$, provided that the two surfaces \ebk are sufficiently close, see \cite{Dziuk88,DziukElliott_ESFEM,Demlow2009}.

Then the composed lift $\,^L$ maps finite element functions on the discrete surface $\Gamma_h[\bfx(t)]$ to functions on the exact surface $\Gamma[X(\cdot,t)]$ via the interpolated surface $\Gamma_h[\xs(t)]$ is denoted by 
$$
w_h^L = (\widehat w_h)^\ell.
$$

\section{Linearly implicit full discretization}
\label{section:BDF}

Similarly as for mean curvature flow \cite{MCF}, for the time discretization of the system of ordinary differential equations \eqref{eq:matrix-vector form} we use a $q$-step linearly implicit backward difference formula (BDF). For a step size $\tau>0$, and with $t_n = n \tau \leq T$, we determine the approximations to all variables $\bfx^n$ to $\bfx(t_n)$, $\bfv^n$ to $\bfv(t_n)$, and $\bfu^n$ to $\bfu(t_n)$ by the fully discrete system of \emph{linear} equations
\begin{subequations}
	\label{eq:BDF}
	\begin{align}
	\bfK(\widetilde \bfx^n) \bfv^n &= \bfg(\widetilde \bfx^n,\widetilde \bfu^n) , \\
	\bfM(\widetilde \bfx^n,\widetilde \bfu^n) \dot \bfu^n + \bfA(\widetilde \bfx^n) \bfu^n &= \bff(\widetilde \bfx^n,\widetilde \bfu^n) ,  \\
	\dot \bfx^n &=  \bfv^n,
	\end{align}
\end{subequations}
where  the discretized time derivatives are given by
\begin{equation}
\label{eq:backward differences def}
\dot \bfx^n = \frac{1}{\tau} \sum_{j=0}^q \delta_j \bfx^{n-j} , \qquad	\dot \bfu^n = \frac{1}{\tau} \sum_{j=0}^q \delta_j \bfu^{n-j} , \qquad n \geq q .
\end{equation}
and where $\widetilde \bfx^n$ and $\widetilde \bfu^n$ are extrapolated values, approximating $\bfx(t_n)$ and $\bfu(t_n)$: 
\begin{equation}
\label{eq:extrapolation def}
\widetilde \bfx^n = \sum_{j=0}^{q-1} \gamma_j \bfx^{n-1-j} , \qquad	\widetilde \bfu^n = \sum_{j=0}^{q-1} \gamma_j \bfu^{n-1-j} , \qquad n \geq q .
\end{equation}
The starting values $\bfx^i$ and $\bfu^i$ ($i=0,\dotsc,q-1$) are assumed to be given. They can be precomputed using either a lower order method with smaller step sizes, or an implicit Runge--Kutta method.

The method is determined by its coefficients, given by $\delta(\zeta)=\sum_{j=0}^q \delta_j \zeta^j=\sum_{\ell=1}^q \frac{1}{\ell}(1-\zeta)^\ell$ and $\gamma(\zeta) = \sum_{j=0}^{q-1} \gamma_j \zeta^j = (1 - (1-\zeta)^q)/\zeta$. 
The classical BDF method is known to be zero-stable for $q\leq6$ and to have order $q$; see \cite[Chapter~V]{HairerWannerII}.
This order is retained by the linearly implicit variant using the above coefficients $\gamma_j$; 
cf.~\cite{AkrivisLubich_quasilinBDF,AkrivisLiLubich_quasilinBDF}.

The analogous linearly implicit backward difference methods were used for mean curvature flow \cite{MCF}. Theorem~6.1 in \cite{MCF} proves optimal-order error bounds for the combined ESFEM--BDF full discretization
of the mean curvature flow system, for finite elements of polynomial degree $k \geq 2$ and BDF methods of order $2 \leq q \leq 5$.

We note that in the $n$th time step, the method requires solving two linear systems  with the symmetric positive definite matrices $\bfK(\widetilde \bfx^n)$ and $\frac{\delta_0}\tau \bfM(\widetilde \bfx^n) + \bfA(\widetilde \bfx^n)$.

From the vectors $\bfx^n =(x_j^n)$, $\bfv^n = (v_j^n)$, and $\bfu^n=(u_j^n)$ with $u_j^n=(\n_j^n,V_j^n)\in\R^3\times \R$ we obtain position approximations to $X(\cdot,t_n)$,  ${\rm id}_{\Gamma[X(\cdot,t_n)]}$, velocity approximations to $v(\cdot,t_n)$, and approximations to the normal vector and the normal velocity, respectively, at time $t_n$ as
\begin{equation}
\label{eq:approx}
\begin{aligned}
X_h^n(p_h) &= \sum_{j=1}^N x_j^n \, \phi_j[\bfx(0)](p_h) \quad\hbox{ for } p_h \in \Gamma_h^0, \\   
x_h^n & = {\rm id}_{\Gamma[X_h^n]} , \\
v_h^n(x)      &= \sum_{j=1}^N v_j^n \, \phi_j[\bfx^n](x)  \qquad\hbox{ for } x \in \Gamma_h[\bfx^n], \\
\n_h^n(x)      &= \sum_{j=1}^N \n_j^n \, \phi_j[\bfx^n](x)  \qquad\hbox{ for } x \in \Gamma_h[\bfx^n], \\
V_h^n(x)      &= \sum_{j=1}^N V_j^n \, \phi_j[\bfx^n](x)  \qquad\hbox{ for } x \in \Gamma_h[\bfx^n].
\end{aligned}
\end{equation}
The approximation $H_h$ of the mean curvature is similarly given by the nodal values $\bfH^n = V^{-1}(\bfV^n)$. 

In the semi-discrete case, the approximations of the same quantities are given analogously.

\section{Main results: error estimates}
\label{section:main results}

We will now formulate the main results of this paper, which provide optimal-order error bounds for the finite element semi-discretisation, for finite elements of polynomial degree $k\ge 2$, and of the full discretisation with linearly implicit BDF methods, of order $2 \leq q \leq 5$.

We denote by $\Gamma(t)=\Gamma[X(\cdot,t)]$ the exact surface and by $\Ga_h[\bfx(t)]$ the discrete surface at time $t$. We introduce the notation
$$
x_h^L(x,t) =  X_h^L(p,t) \in \Ga_h[\bfx(t)] \qquad\hbox{for}\quad x=X(p,t)\in\Gamma(t).
$$

\subsection{Convergence of the semi-discretization}

\begin{theorem}
	\label{thm:semi-discrete convergence}
	Consider the semi-discretization \eqref{semidiscretization} of the coupled generalized mean curvature flow problem \eqref{evolutioneqs-weak} with \eqref{eq:velocity ODE}, using evolving surface finite elements of polynomial degree~$k \geq 2$. Let the function $V$ satisfy \eqref{eq:properties of V}. \bbk Suppose that \ebk the flow \eqref{eq:velocity law} has a sufficiently regular solution $(X,v,\nu,V)$ on some time interval $[0,T]$, and that the flow map $X(\cdot,t):\Gamma^0\to \Gamma(t)\subset\R^3$ is non-degenerate so that $\Gamma(t)$ is a regular surface, with mean curvature $0 < H_0 \leq H(\cdot,t) \leq H_1$, on the time interval $t\in[0,T]$.
	
	Then, there exists constants $h_0 > 0$ and $C>0$ such that 
	\begin{align*}
	\| x_h^L (\cdot,t) - \Id_{\Gamma(t)} \|_{H^1(\Gamma(t))} \leq C h^k, \\
	\| v_h^L (\cdot,t) - v(\cdot,t) \|_{H^1(\Gamma(t))} \leq C h^k, \\
	\| \nu_h^L (\cdot,t) - \nu(\cdot,t) \|_{H^1(\Gamma(t))} \leq C h^k, \\
	\| V_h^L (\cdot,t) - V(\cdot,t) \|_{H^1(\Gamma(t))} \leq C h^k, 
	\end{align*}
	and, since $V$ is a smooth and invertible function of the mean curvature $H$, we also obtain
	\begin{align*}
	\| H_h^L (\cdot,t) - H(\cdot,t) \|_{H^1(\Gamma(t))} \leq C h^k ,
	\end{align*}
	for all $h < h_0$. Furthermore, we obtain
	\begin{align}
	\| X_h^\ell (\cdot,t) - X(\cdot,t) \|_{H^1(\Gamma(t))} \leq C h^k
	\end{align}
	for all $h < h_0$. 
	The constant $C$ is independent of $h$, but depends on bounds of higher derivatives of the solution $(X,v,\nu,V)$ of the generalized mean curvature flow, and on the length $T$ of the time interval.
\end{theorem}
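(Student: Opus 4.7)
The plan is to follow the classical \emph{stability $+$ consistency $\Rightarrow$ convergence} paradigm, working in the matrix--vector formulation \eqref{eq:matrix-vector form} so that the geometric errors are neatly separated from the analytic ones, and exploiting as far as possible the structural similarity to the mean curvature flow system \eqref{eq:MCF matrix-vector form}. Specifically, I would introduce the nodal vectors $\xs(t),\vs(t),\us(t)$ of the Lagrange interpolation of the exact solution onto $\Gamma_h[\xs(t)]$, split the errors as
\begin{equation*}
    \bfx - \xs = \ex, \qquad \bfv - \vs = \ev, \qquad \bfu - \us = \eu,
\end{equation*}
and derive error equations by subtracting \eqref{eq:matrix-vector form} evaluated at $(\xs,\vs,\us)$ (which gives a residual) from the semi-discrete system at $(\bfx,\bfv,\bfu)$. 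The final $H^1$-error bounds then follow from the matrix--vector bounds via the geometric estimates relating $\Gamma_h[\bfx]$, $\Gamma_h[\xs]$ and $\Gamma(t)$ (cf.~the results that will be gathered in Section~\ref{section:relating surfaces}).

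For \emph{consistency}, inserting $(\xs,\vs,\us)$ into \eqref{eq:matrix-vector form} produces defects $\bfd_\bfv$ and $\bfd_\bfu$. Since the right-hand sides of the weak evolution equations for $(v,\nu,V)$ are formally identical (with $V$ replacing $H$) to those analysed in \cite[Lemma~8.1]{MCF}, I would adapt that proof verbatim for the velocity equation, the stiffness term and the non-linear curvature terms $|A_h|^2\nu_h$, $|A_h|^2 V_h$, using standard surface FEM interpolation estimates of order $h^k$ for degree-$k$ elements. The only genuinely new contribution is the mass term $\intGhx V_h'^{-1}\,\dbullet_h(\cdot)\,\varphi_h$: using the smoothness and local Lipschitz property of $V^{-1}$ and $V'$ from \eqref{eq:properties of V}, together with the pointwise bounds \eqref{eq:bounds on mean curvature}, I would expand $1/V'(H_h^*) - 1/V'(H)$ and bound the remainder by $h^k$ in $L^\infty$, yielding defect bounds of optimal order $h^k$ in the dual $H^{-1}$-type norm.

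The \emph{stability} step is the main obstacle. I would follow the template of \cite[Proposition~10.1]{MCF}, testing the error equation for $\eu$ with $\doteu$ (or $\eu$) to obtain an energy identity, and using the $H^1$-stability of the Ritz projection for the velocity error equation as in [MCF]. The new difficulty is the solution-dependent mass matrix $\bfM(\bfx,\bfu)$: its time derivative and the difference $\bfM(\bfx,\bfu)-\bfM(\xs,\us)$ involve $1/V'(H_h) - 1/V'(H_h^*)$ and must be controlled uniformly in time. To this end I would (i) derive pointwise $W^{1,\infty}$-smallness of $\eu$ by combining $H^1$-smallness with an inverse estimate under a mild CFL-type condition $h^{k}\leq ch$, (ii) use this to get $h$-uniform upper and lower bounds $\tfrac12 H_0 \leq H_h \leq 2H_1$, which by \eqref{eq:properties of V} give two-sided bounds $c_0 \leq V'(H_h) \leq c_1$, and (iii) establish a solution-dependent norm equivalence $\|\cdot\|_{\bfM(\bfx,\bfu)} \sim \|\cdot\|_{\bfM(\bfx)}$. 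The term $\bigl(\bfM(\bfx,\bfu)-\bfM(\xs,\us)\bigr)\dotus$ is then absorbed using local Lipschitzness of $1/V'$ and the inductive $H^1$-error hypothesis, while the remaining non-linearities $\bff$ and $\bfg$ are estimated as in \cite[Section~10]{MCF}. A bootstrap (continuation) argument on the time interval where the $W^{1,\infty}$-smallness holds then closes the stability estimate.

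Combining the consistency defect bounds and the stability estimate through a Gronwall argument yields $\|\eu(t)\|_{\bfK(\xs(t))}+\|\ex(t)\|_{\bfK(\xs(t))}+\|\ev(t)\|_{\bfK(\xs(t))}\leq C h^k$. Translating from the matrix--vector norms to the $H^1(\Gamma(t))$-norms of the lifts via the geometric perturbation results completes the proof of all listed bounds; the estimate for $H_h^L - H$ follows since $H = V^{-1}(V)$ with $V^{-1}$ smooth on $[V(H_0),V(H_1)]$, and the bound for $X_h^\ell - X$ is obtained by integrating the velocity error in time. The hard part will clearly be (iii) above — the solution-dependent mass matrix estimates — which is where the generalized-flow analysis genuinely departs from \cite{MCF}.
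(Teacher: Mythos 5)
Your proposal follows the paper's overall architecture — stability $+$ consistency in the matrix--vector formulation, using the structural similarity to \cite{MCF}, a $W^{1,\infty}$-smallness bootstrap via an inverse estimate, $h$-uniform two-sided bounds on $H_h$ and hence on $V'(H_h)$, and the solution-dependent norm equivalence $\|\cdot\|_{\bfM(\bfx,\bfu)}\sim\|\cdot\|_{\bfM(\bfx)}$. In the paper's actual treatment, the semi-discrete proof is omitted as being essentially the time-continuous analogue of the fully discrete Proposition~\ref{proposition:stability - full} and Lemma~\ref{lemma:consistency - fully-discrete}; you have correctly reconstructed most of that skeleton. There is, however, one genuine gap in your error decomposition.

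You set $\us(t)$ to be the nodal vector of the \emph{Lagrange interpolation} of $u=(\nu,V)$ on $\Gamma_h[\xs(t)]$. The paper instead defines $\us$ through the \emph{Ritz map} $\widetilde{R}_h u$ on $\Gamma_h[\xs]$, see \eqref{eq:Ritz map def}. This choice is not cosmetic. With the interpolant, inserting $\us$ into the semi-discrete evolution equation leaves a defect whose dominant contribution comes from the stiffness term $\int_{\Gamma_h[\xs]}\nabla_{\Gamma_h[\xs]}(I_h u - u^{-\ell})\cdot\nabla_{\Gamma_h[\xs]}\varphi_h$, which is only $O(h^k)\|\varphi_h\|_{H^1}$ — i.e., $\du$ is $O(h^k)$ in a dual $H^{-1}$-type norm, exactly as you write. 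But the stability argument tests the error equation for $\eu$ with the time derivative $\doteu$, and the defect enters as $\doteu^T\bfM(\xs)\du$. To absorb this term you need $\|\du\|_{\bfM(\xls)}=\|d_u\|_{L^2}=O(h^k)$; an $H^{-1}$-type bound only gives $\doteu^T\bfM(\xs)\du\le\|\doteu\|_{\bfK(\xls)}\|\du\|_{\star,\xls}$, and the factor $\|\doteu\|_{\bfK}$ cannot be absorbed by the $\|\doteu\|_{\bfM(\bfx,\bfu)}^2$ on the left. The Ritz map is constructed precisely to cancel the gradient-term contribution, which is how Lemma~\ref{lemma:consistency - fully-discrete} obtains $\|\du\|_{\bfM(\xls^n)}\le c(h^k+\tau^q)$ in the $L^2$-norm. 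With the Lagrange interpolant your stability estimate would not close. (Your invocation of an ``$h^k\le ch$'' CFL-type condition is also off: that is vacuous; in the semi-discrete setting no step-size restriction enters, and the bootstrap just needs $k\geq2$ so that $h^{k-1}$ is small.)
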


\subsection{Convergence of the full discretization}

\begin{theorem}
	\label{theorem:fully discrete convergence}
	Consider the ESFEM--BDF full discretization \eqref{eq:BDF} of the coupled generalized mean curvature flow problem \eqref{evolutioneqs-weak} with \eqref{eq:velocity ODE}, using evolving surface finite elements of polynomial degree~$k\ge 2$ and linearly implicit BDF time discretization of order $q$ with $2\le q\le 5$. 
	Let the function $V$ satisfy \eqref{eq:properties of V}. Suppose that the generalized flow admits an exact solution $(X,v,\nu,V)$ that is sufficiently smooth on some time interval $t\in[0,T]$, and that the flow map $X(\cdot,t):\Gamma^0\to \Gamma(t)\subset\R^3$ is non-degenerate so that $\Gamma(t)$ is a regular surface, with mean curvature $0 < H_0 \leq H(\cdot,t) \leq H_1$, on the time interval $t\in[0,T]$. 
	
	Then, there exist  $h_0>0$, $\tau_0>0$, and $C>0$ such that for all mesh sizes $h \leq h_0$  and time step sizes $\tau\le\tau_0$ satisfying the step size restriction 
	\begin{equation}
	\label{stepsize-restriction}
	\tau \le \Co h 
	\end{equation}
	(where $\Co >0$ can be chosen arbitrarily), 
	the following error bounds for the lifts of the discrete position, velocity, normal vector and normal velocity hold over the exact surface: provided that the starting values are sufficiently accurate in the $H^1$-norm at time $t_i=i\tau$ for $i=0,\dots,q-1$, we have at time $t_n=n\tau\le T$
	\begin{align*}
	\|(x_h^n)^L - \Id_{\Gamma(t_n)}\|_{H^1(\Ga(t_n))^3} &\leq C(h^k+\tau^q), \\
	\|(v_h^n)^L - v(\cdot,t_n)\|_{H^1(\Ga(t_n))^3} &\leq C(h^k+\tau^q), \\ 
	\|(\nu_h^n)^L - \nu(\cdot,t_n)\|_{H^1(\Ga(t_n))^3} &\leq C(h^k+\tau^q), \\ 
	\|(V_h^n)^L - V(\cdot,t_n)\|_{H^1(\Ga(t_n))} &\leq C(h^k+\tau^q), 
	\end{align*}
	and also
	\begin{align*}
	\|(X_h^n)^l - X(\cdot,t_n)\|_{H^1(\Ga^0)^3} & \leq C(h^k+\tau^q), \qquad \text{and} \\
	\|(H_h^n)^L - H(\cdot,t_n)\|_{H^1(\Ga(t_n))} &\leq C(h^k+\tau^q), 
	\end{align*}
	where the constant $C$ is independent of $h$, $\tau$ and $n$ with $n\tau\le T$, but depends on bounds of higher derivatives of the solution $(X,v,\nu,V)$ of the generalised mean curvature flow, and on the length $T$ of the time interval, and on $\Co$.
\end{theorem}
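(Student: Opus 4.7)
The plan is to follow the classical error-analysis scheme \emph{stability plus consistency yields convergence}, adapted from \cite{MCF} but modified to handle the new solution-dependent mass matrix $\bfM(\bfx,\bfu)$ and the non-linear factor $1/V'(H)$ appearing in the evolution equations. The core idea is to introduce error vectors
$\ex^n = \bfx^n - \xs^n$, $\ev^n = \bfv^n - \vs^n$, $\eu^n = \bfu^n - \us^n$,
where the starred quantities are the nodal vectors of an appropriate interpolation (Lagrange, or a suitable Ritz-type projection) of the exact solution onto $\Ga_h[\xs(t_n)]$, and to derive a perturbed system for these errors whose right-hand side is a defect measuring how well the exact solution satisfies the scheme~\eqref{eq:BDF}.

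The consistency step will show, under the smoothness hypothesis on $(X,v,\nu,V)$, that when $\xs^n,\us^n,\vs^n$ are substituted into \eqref{eq:BDF} the equations hold up to defects $\bfd_\bfx^n,\bfd_\bfv^n,\bfd_\bfu^n$ bounded in the discrete $H^1$-norm by $C(h^k+\tau^q)$. The spatial part of the defect is inherited essentially verbatim from \cite[Lemma~8.1]{MCF} (geometric interpolation estimates on $\Gah$ versus $\Ga$ of degree $k$), while the temporal part arises from the BDF quadrature truncation error, of order $\tau^q$. The only genuine novelty here is handling the term $\bfM(\wtx^n,\wtu^n)\dot\bu^n$: one needs to Taylor-expand the non-linear factor $1/V'(H)$ around the exact $H$ using that $V$ is smooth with $V'>0$ bounded below on the interval $[H_0,H_1]$, which is guaranteed by \eqref{eq:properties of V} and \eqref{eq:bounds on mean curvature}. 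I expect this consistency analysis to be technically involved but structurally routine.

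The main obstacle is the stability proof, carried out in matrix--vector form on a time interval $0\le t_n\le T$ under the bootstrap hypothesis
\[
\|\ex^j\|_{\bfK(\xs^j)}+\|\eu^j\|_{\bfK(\xs^j)}\le h^{(k+1)/2}, \qquad j\le n-1,
\]
which by an inverse estimate yields $W^{1,\infty}$-smallness of $\eu^j$ and hence uniform upper and lower bounds on $H_h^j$, $V_h^j$ and $V'(H_h^j)$. These bounds are essential for two purposes: (i) they guarantee that $\bfM(\wtx^n,\wtu^n)$ is symmetric positive definite with a solution-independent norm equivalence $c_0\bfM(\xs^n)\le\bfM(\wtx^n,\wtu^n)\le c_1\bfM(\xs^n)$, so energy estimates with this mass matrix are possible; (ii) they enable Lipschitz estimates for $\bff,\bg$ in their arguments. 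The energy estimate then mimics the scheme of \cite[Proposition~10.1]{MCF}: test the error equations with the Nevanlinna--Odeh multiplier applied to $\dot\eu^n$ (for $q\le 5$) and apply Dahlquist's $G$-stability of the BDF $q$-step method to obtain a discrete $G$-norm identity. The extra terms arising from the difference of mass matrices $\bfM(\wtx^n,\wtu^n)-\bfM(\wtx^n,\bfu^{n,\ast})$ must be estimated by a new technical lemma that splits the difference as $\int(1/V'(\wtH_h^n)-1/V'(H_h^{n,\ast}))\varphi_i\varphi_j$, bounded using the local Lipschitz continuity of $1/V'$ and a Cauchy--Schwarz argument against the $W^{1,\infty}$-small error; similarly for the geometric $\bfx$-dependence as in \cite{MCF}.

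Collecting these estimates and absorbing the small terms into the left-hand side yields, after summation and a discrete Gronwall argument, a stability bound of the form
\[
\|\ex^n\|_{\bfK(\xs^n)}^2+\|\eu^n\|_{\bfK(\xs^n)}^2 \le C\bigl(\text{initial errors}+\max_{j\le n}(\|\dx^j\|^2+\|\du^j\|^2)\bigr),
\]
which combined with the consistency bound $O(h^k+\tau^q)$ and the starting-value assumption gives the stated $H^1$ error bounds. The step size restriction $\tau\le \Co h$ is required to close the bootstrap, because the $W^{1,\infty}$-control of $\eu^n$ via inverse estimates converts an $H^1$-bound of order $h^k+\tau^q$ into an $L^\infty$-bound of order $h^{k-1}+\tau^q h^{-1}$, which is $o(1)$ under the CFL-type condition and $k\ge 2$. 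Finally, lifting the finite element functions to $\Ga(t_n)$ and using the geometric comparison estimates between $\Ga_h[\bfx^n]$, $\Ga_h[\xs^n]$ and $\Ga(t_n)$ (extended to the solution-dependent setting as announced after \eqref{eq:MCF matrix-vector form}) yields the claimed $H^1$-bounds for the lifted variables, and the bound for $H$ follows from the smoothness and invertibility of $V$.
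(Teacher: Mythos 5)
Your proposal is correct and follows essentially the same strategy as the paper: a stability estimate in the matrix--vector formulation obtained by testing with $\dot\bfe_\bfu^n$ and combining Dahlquist $G$-stability with the Nevanlinna--Odeh multiplier, closed via a $W^{1,\infty}$-bootstrap enforced through inverse estimates under the CFL-type condition $\tau\le C_0 h$; a consistency bound $O(h^k+\tau^q)$ in which the only genuine novelty is the solution-dependent mass term handled through the local Lipschitz continuity of $1/V'$ and the $h$-uniform bounds on $H_h$ and $V'(H_h)$; a new technical lemma (the paper's Lemma~\ref{lemma:nonlinear mass matrix diff}) bounding differences $\bfM(\bfx,\bfu)-\bfM(\bfx,\bfu^\ast)$; and a final lift and error decomposition via interpolation of $X,v$ and the Ritz map for $u$. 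Minor presentational differences (you state the bootstrap in $\bfK$-norm and convert by inverse estimate, while the paper states it directly in $W^{1,\infty}$; your bootstrap exponent $h^{(k+1)/2}$ corresponds to the paper's $h^{(\kappa-1)/2}$ with $\kappa\ge 2$) do not change the argument.
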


Sufficient regularity assumptions are the following: uniformly in $t\in[0,T]$ and for $j=1,\dotsc,q+1$,
\begin{align*}
&\ X(\cdot,t)  \in  H^{k+1}(\Ga^0), \ \pa_t^{j} X(\cdot,t) \in  H^{1}(\Ga^0)  , \\
&\ v(\cdot,t)  \in H^{k+1}(\Ga(X(\cdot,t))),\ {\mat}^j v(\cdot,t) \in H^{2}(\Ga(X(\cdot,t)))  , \\
\text{for } \ u=(\nu,V) , \quad &\ u(\cdot,t) , \mat u(\cdot,t) \in  W^{k+1,\infty}(\Ga(X(\cdot,t)))^4  , \\
&\ {\mat}^j u(\cdot,t) \in  H^{2}(\Ga(X(\cdot,t)))^4  .
\end{align*}

For the starting values, sufficient approximation conditions are the following: for $i=0,\dotsc,q-1$,
\begin{align*}
\|(x_h^i)^L - \Id_{\Gamma(t_i)}\|_{H^1(\Ga(t_i))^3} &\leq C(h^k+\tau^q), \\
\|(v_h^i)^L - v(\cdot,t_i)\|_{H^1(\Ga(t_i))^3} &\leq C(h^k+\tau^q), \\ 
\|(\n_h^i)^L - \n(\cdot,t_i)\|_{H^1(\Ga(t_i))^3} &\leq C(h^k+\tau^q), \\ 
\|(V_h^i)^L - V(\cdot,t_i)\|_{H^1(\Ga(t_i))} &\leq C(h^k+\tau^q), 
\end{align*}
and in addition, for $i=1,\dotsc,q-1$,
\begin{equation*}
\tau^{1/2} \Big\|\frac{1}{\tau}\big(X_h^i - X_h^{i-1} \big)^l - \frac{1}{\tau}\big(X(\cdot,t_i) -  X(\cdot,t_{i-1}) \big) \Big\|_{H^1(\Ga^0)^3} \leq C(h^k+\tau^q) .
\end{equation*}

In view of Remark~\ref{remark:pointwise discrete velocity law}, both of the above theorems hold verbatim if the discretized velocity law is enforced using the nodal finite element interpolation, cf.~\eqref{eq:pointwise discrete velocity law}, instead of the Ritz map.

It is important to note here that, since both of the above results are shown by extending the techniques of \cite{MCF} to generalized mean curvature flow, the observations (including preservation of mesh admissibility and non-degeneration under the assumed regularity) after Theorem~4.1 and 6.1 from \cite{MCF} hold analogously to Theorem~\ref{thm:semi-discrete convergence} and \ref{theorem:fully discrete convergence} here.

\bbk 
Since the proof of this result is almost identical to the proof of the semi-discrete convergence theorem, the latter is omitted for brevity.
\ebk

%\section{Semi-discrete stability}
%\label{section:semi-discrete stability}

\section{Relating different surfaces}
\label{section:relating surfaces}

In our previous work \cite{KLLP2017,MCF} we proved some technical results relating different finite element surfaces. Here we use the same setting, and briefly (and almost verbatim) recapitulate it below.

The finite element matrices defined in Section~\ref{section:matrix-vector form} induce discrete versions of Sobolev norms on the discrete surface $\Gamma_h[\bfx]$. For any nodal vector $\bfw \in \R^{N}$, with the corresponding finite element function $w_h \in S_h[\bfx]$, we define the following \bbk (semi)-norms: \ebk 
\begin{equation}
\label{eq:norms}
\begin{aligned}
&  \|\bfw\|_{\bfM(\bfx)}^{2} = \bfw^T \bfM(\bfx) \bfw = \|w_h\|_{L^2(\Ga_h[\bfx])}^2 , \\
&  \|\bfw\|_{\bfA(\bfx)}^{2} = \bfw^T \bfA(\bfx) \bfw = \|\nb_{\Ga_h[\bfx]} w_h\|_{L^2(\Ga_h[\bfx])}^2 , \\
&  \|\bfw\|_{\bfK(\bfx)}^{2} = \bfw^T \bfK(\bfx) \bfw = \|w_h\|_{H^1(\Ga_h[\bfx])}^2 .
\end{aligned}
\end{equation}
We also note here that the matrix $\bM(\bx,\bfu)$ also generates a solution dependent norm:
\begin{equation*}
\|\bfw\|_{\bfM(\bfx,\bfu)}^{2} = \bfw^T \bfM(\bfx,\bfu) \bfw = \intGhx (V_h')^{-1} \, |w_h|^2 ,
\end{equation*}
equivalent to $\|\cdot\|_{\bfM(\bfx)}$.

\bbk Let arbitrary nodal vectors $\bx, \bfy \in \R^{3N}$ define \ebk the discrete surfaces $\Gammahx$ and $\Ga_h[\by]$, respectively. Their difference is denoted by
\begin{equation*}
\bfe = (e_j)_{j=1}^N = \bx - \by \in \R^{3N} . 
\end{equation*}
For $\theta \in [0,1]$ we consider the intermediate surface $\Gammahtheta = \Ga_h[\by+\theta\bfe]$, and for \bbk any vectors $\bw,\bz \in \R^N$ we consider the corresponding \ebk finite element functions on $\Ga_h^\theta$:
\begin{align*}
e_h^\theta = \sum_{j = 1}^N e_j \phi_j[\by+\theta\bfe] , \quad 
w_h^\theta = \sum_{j = 1}^N w_j \phi_j[\by+\theta\bfe] , \quad \text{and} \quad
z_h^\theta = \sum_{j = 1}^N z_j \phi_j[\by+\theta\bfe] .
\end{align*}
Figure~\ref{figure:relating different surfaces} illustrates the described construction.
\begin{figure}[htbp]
	\begin{center}
		\includegraphics[scale=1]{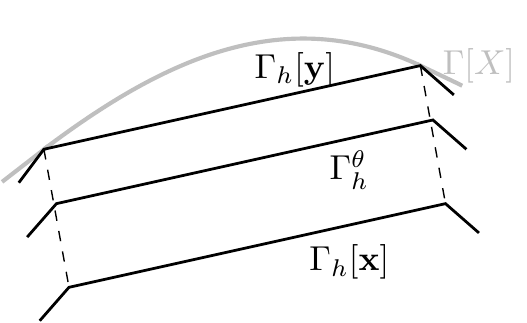}
		\caption{The construction of the intermediate surfaces $\Gamma_h^\theta$}
		\label{figure:relating different surfaces}
	\end{center}
\end{figure}

Similarly, for $\bfu = (\bfn,\bfV)^T \in \R^{4N}$, with $\bfV \in \R^N$ defining $V^{-1}(\bfV) = \bfH \in \R^N$ by inverting the function $V$, we consider the corresponding finite element function $(V_h')^\theta$ on $\Ga_h^\theta$, which appears in the solution-dependent mass matrix $\bfM(\bfx,\bfu)$:
\begin{equation}
\label{eq:def V_h prime}
	(V_h')^\theta = \sum_{j = 1}^N V'(\bfH_j) \phi_j[\by+\theta\bfe] .
\end{equation}

Analogous to \cite[Section~7]{MCF}, \bbk we will use the following results which relate \ebk quantities on different surfaces, in particular proving a new result which compares solution dependent matrices.

Assuming that $\| \nabla_{\Gamma_h[\bfy]} e_h^0 \|_{L^\infty(\Gamma_h[\bfy])}\le \frac14$ Lemma~7.2 of \cite{MCF} (with $p=2$) shows that 
\begin{equation}
\label{norm equivalence}
\begin{aligned}
&\text{the norms $\|\cdot\|_{\bfM(\bfy+\theta\bfe)}$ and the \bbk semi-norms \ebk $\|\cdot\|_{\bfA(\bfy+\theta\bfe)}$} 
\text{are $h$-uniformly equivalent for $0\le\theta\le 1$.}
\end{aligned}
\end{equation}

Under the condition that $\| \nabla_{\Gamma_h[\bfy]} e_h^0 \|_{L^\infty(\Gamma_h[\bfy])} \le \tfrac14$, using the definition of $e_h^\theta$ in Lemma~4.1 of \cite{KLLP2017} and applying the Cauchy--Schwarz inequality yields the bounds,
\begin{equation}
\label{matrix difference bounds}
\begin{aligned}
\bfw^T (\bfM(\bfx)-\bfM(\bfy)) \bfz \leq &\ c \, \bbk \| \nabla_{\Gamma_h[\bfy]} e_h^0 \|_{L^\infty(\Gamma_h[\bfy])} \ebk  \|\bfw\|_{\bfM(\bfy)} \|\bfz\|_{\bfM(\bfy)} , \\[1mm]
\bfw^T (\bfA(\bfx)-\bfA(\bfy)) \bfz \leq &\ c \, \bbk \| \nabla_{\Gamma_h[\bfy]} e_h^0 \|_{L^\infty(\Gamma_h[\bfy])} \ebk \|\bfw\|_{\bfA(\bfy)} \|\bfz\|_{\bfA(\bfy)} .
\end{aligned}
\end{equation}
We will also use the bounds with additionally assuming $z_h \in W^{1,\infty}(\Ga_h[\bfy])$:
\begin{equation}
\label{matrix difference bounds e_x}
\begin{aligned}
\bfw^T (\bfM(\bfx)-\bfM(\bfy)) \bfz \leq &\ c \, \|\bfw\|_{\bfM(\bfy)} \|\bfe\|_{\bfA(\bfy)} \bbk \| z_h \|_{W^{1,\infty}(\Gamma_h[\bfy])} \ebk , \\[1mm]
\bfw^T (\bfA(\bfx)-\bfA(\bfy)) \bfz \leq &\ c \, \|\bfw\|_{\bfA(\bfy)} \|\bfe\|_{\bfA(\bfy)} \bbk \| z_h \|_{W^{1,\infty}(\Gamma_h[\bfy])} \ebk .
\end{aligned}
\end{equation}

Consider now a continuously differentiable function $\bfx:[0,T]\to\R^{3N}$ that defines a finite element surface $\Gamma_h[\bfx(t)]$ for every $t\in[0,T]$, and assume that its time derivative $\bfv(t)=\dot\bfx(t)$ is the nodal vector of a finite element function $v_h(\cdot,t)$ that satisfies
\begin{equation}
\label{vh-bound}
\| \nabla_{\Gamma_h[\bfx(t)]}v_h(\cdot,t) \|_{L^{\infty}(\Gamma_h[\bfx(t)])} \le K, \qquad 0\le t \le T.
\end{equation}
With $\bfe=\bfx(t)-\bfx(s)=\int_s^t \bfv(r) \d r$, the bounds \eqref{matrix difference bounds} then yield the following bounds, which were first shown in Lemma~4.1 of \cite{DziukLubichMansour_rksurf}: for $0\le s, t \le T$ with $K|t-s| \le \tfrac14$, 
we have with $C=c K$
\begin{equation}
\label{matrix difference bounds-t}
\begin{aligned}
\bfw^T \bigl(\bfM(\bfx(t))  - \bfM(\bfx(s))\bigr)\bfz \leq&\ C \, |t-s| \, \|\bfw\|_{\bfM(\bfx(t))}\|\bfz\|_{\bfM(\bfx(t))} , \\[1mm]
\bfw^T \bigl(\bfA(\bfx(t))  - \bfA(\bfx(s))\bigr)\bfz \leq&\ C\,  |t-s| \, \|\bfw\|_{\bfA(\bfx(t))}\|\bfz\|_{\bfA(\bfx(t))}.   
\end{aligned}
\end{equation}
Letting $s\to t$, this implies the bounds stated in Lemma~4.6 of~\cite{KLLP2017}:
\begin{equation}
\label{matrix derivatives}
\begin{aligned}
\bfw^T \frac\d{\d t}\bfM(\bfx(t))  \bfz \leq &\ C \, \|\bfw\|_{\bfM(\bfx(t))}\|\bfz\|_{\bfM(\bfx(t))} , \\[1mm]
\bfw^T \frac\d{\d t}\bfA(\bfx(t))  \bfz \leq &\ C \, \|\bfw\|_{\bfA(\bfx(t))}\|\bfz\|_{\bfA(\bfx(t))} .
\end{aligned}
\end{equation}
Moreover, by patching together finitely many intervals over which $K|t-s| \le \tfrac14$, we obtain that
\begin{equation}
\label{norm-equiv-t}
\begin{aligned}
&\text{the norms $\|\cdot\|_{\bfM(\bfx(t))}$ and the \bbk semi-norms \ebk  $\|\cdot\|_{\bfA(\bfx(t))}$}
\text{are $h$-uniformly equivalent for $0\le t \le T$.}
\end{aligned}
\end{equation}

The following new result is a solution dependent variant of the estimates relating mass matrices on different surfaces and with different geometric variables. In both cases we establish the analogons of \eqref{matrix difference bounds}--\eqref{matrix difference bounds e_x}. These estimates will play a crucial role in the stability proofs. 
\begin{lemma}
	\label{lemma:nonlinear mass matrix diff}
	%In the above setting the following identities hold:
	Let $\varepsilon := \| \nabla_{\Ga_h[y]} e_h^0 \|_{L^\infty(\Ga_h[\bfy])}$ and let $\bfu = (\bfn,\bfV)^T$ and $\us = (\bfn^*,\bfV^*)^T \in \R^{4N}$ such that the corresponding $(V_h')^{-1}$ and $((V_h')^*)^{-1}$, defined by \eqref{eq:def V_h prime}, 
	\btb
	have bounded positive upper and lower bounds. 
	\etb 
	%have bounded $L^\infty(\Ga_h[\bfy])$ norms. 
	If $\varepsilon \leq \frac{1}{4}$, then, in the above setting, the following bounds hold:
	\begin{align}
	\tag{i}
	\bfw^T \big( \bfM(\bfx,\bfu)-\bfM(\bfy,\bfu) \big) \bfz 
	&\leq c \| \nabla_{\Ga_h[y]} e_h^0 \|_{L^\infty(\Ga_h[\bfy])} \, \| \bfw \|_{\bfM(\bfy)} \, \| \bfz \|_{\bfM(\bfy)}, \\
	\tag{ii}
	\bfw^T \big( \bfM(\bfx,\bfu)-\bfM(\bfy,\bfu) \big) \bfz 
	&\leq c \|\bfe\|_{\bfA(\bfy)} \| \bfw \|_{\bfM(\bfy)} \, \| z_h \|_{L^\infty(\Ga_h[\bfy])} ,
	\end{align}
	and
	\begin{align}
	\tag{iii}
	\bfw^T \big( \bfM(\bfx,\bfu)-\bfM(\bfx,\us) \big) \bfz 
	&\leq c \| u_h - u_h^* \|_{L^\infty(\Ga_h[\bfy])} \, \| \bfw \|_{\bfM(\bfx)}  \, \| \bfz \|_{\bfM(\bfx)} , \\
	\tag{iv}
	\bfw^T \big( \bfM(\bfx,\bfu)-\bfM(\bfx,\us) \big) \bfz 
	&\leq c \| \bfu - \us \|_{\bfM(\bfx)} \, \| \bfw \|_{\bfM(\bfx)} \, \| z_h \|_{L^\infty(\Ga_h[\bfx])} .
	\end{align}
\end{lemma}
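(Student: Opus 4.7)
The plan is to handle the four bounds in two groups, reducing each to the corresponding linear results of \cite{KLLP2017,MCF}, with the weight $1/V_h'$ treated through its pointwise upper and lower bounds and the local Lipschitz property of $V'\circ V^{-1}$ guaranteed by \eqref{eq:properties of V}.

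For (i) and (ii), I would write the difference as a telescoping integral over the intermediate surfaces $\Gammahtheta$,
\[
\bfw^T \bigl( \bfM(\bfx,\bfu)-\bfM(\bfy,\bfu) \bigr) \bfz
= \int_0^1 \ddtheta \intGhtheta \frac{w_h^\theta \, z_h^\theta}{(V_h')^\theta} \, \d\theta ,
\]
and exploit that the nodal values of $w_h^\theta$, $z_h^\theta$, and $(V_h')^\theta$ (the last by \eqref{eq:def V_h prime}) are all $\theta$-independent. Hence by the discrete transport property along the flow $\theta\mapsto \bfy+\theta\bfe$ each of these finite element functions has vanishing $\theta$-material derivative, and by the chain rule together with the uniform positive lower bound of $(V_h')^\theta$, so does $1/(V_h')^\theta$. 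The surface transport theorem then collapses to
\[
\ddtheta \intGhtheta \frac{w_h^\theta z_h^\theta}{(V_h')^\theta}
= \intGhtheta \bigl( \nablaGhtheta \cdot e_h^\theta \bigr) \frac{w_h^\theta z_h^\theta}{(V_h')^\theta} .
\]
Bounding $1/(V_h')^\theta$ uniformly and applying H\"older's inequality of type $L^\infty\times L^2\times L^2$ produces (i); of type $L^2\times L^2\times L^\infty$, after factoring out $\|z_h^\theta\|_{L^\infty}$ and using $\|\nablaGhtheta e_h^\theta\|_{L^2}\le c\|\bfe\|_{\bfA(\bfy)}$, produces (ii). In both cases the $\theta$-uniform norm equivalence analogous to \eqref{norm-equiv-t} transfers the norms on $\Gammahtheta$ back to $\Ga_h[\bfy]$.

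For (iii) and (iv) the two surfaces coincide, so
\[
\bfw^T \bigl( \bfM(\bfx,\bfu)-\bfM(\bfx,\us) \bigr) \bfz
= \intGhx \Bigl(\frac{1}{V_h'} - \frac{1}{(V_h')^*}\Bigr) \, w_h \, z_h .
\]
The positive lower bounds on $V_h'$ and $(V_h')^*$ give the pointwise estimate $\bigl| 1/V_h' - 1/(V_h')^* \bigr| \leq c\,| V_h' - (V_h')^* |$. Since $V_h'$ and $(V_h')^*$ are the finite element functions with nodal values $V'(V^{-1}(V_j))$ and $V'(V^{-1}(V_j^*))$, the local Lipschitz property of $V'\circ V^{-1}$ from \eqref{eq:properties of V} yields nodewise $|V'(H_j) - V'(H_j^*)| \le c|V_j - V_j^*| \le c|u_j - u_j^*|$. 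Standard finite element nodal-value equivalences then give
\[
\|V_h' - (V_h')^*\|_{L^\infty(\Ga_h[\bfx])} \le c \| u_h - u_h^* \|_{L^\infty(\Ga_h[\bfx])} , \qquad \|V_h' - (V_h')^*\|_{L^2(\Ga_h[\bfx])} \le c \|\bfu - \us\|_{\bfM(\bfx)} .
\]
Bound (iii) follows by placing the pointwise difference in $L^\infty$ and $w_h, z_h$ in $L^2$; bound (iv) follows by factoring out $\|z_h\|_{L^\infty}$ and pairing the $L^2$-bound of the weight difference with $\|w_h\|_{L^2}$.

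The main obstacle will be the careful use of the discrete transport property for the non-polynomial factor $1/(V_h')^\theta$ in (i)--(ii): although its $\theta$-material derivative vanishes because the nodal values of $(V_h')^\theta$ are fixed, one must ensure that the positive lower bound used to differentiate the reciprocal is uniform in $\theta\in[0,1]$. This follows from the $h$-uniform equivalence of finite element function values on the intermediate surfaces $\Gammahtheta$ under the smallness condition $\varepsilon\le 1/4$, which transfers the assumed bounds on $(V_h')^{-1}$ on $\Ga_h[\bfy]$ to each intermediate surface; once this is in place, the remaining computation is a direct adaptation of the argument for the standard mass matrix in Lemma~4.1 of \cite{KLLP2017}.
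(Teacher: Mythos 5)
Your proposal is correct and takes essentially the same route as the paper: a fundamental-theorem-of-calculus/Leibniz-formula telescope over the intermediate surfaces $\Gammahtheta$ for (i)--(ii), and a pointwise split of the weight $1/(V_h')-1/(V_h')^*$ combined with the Lipschitz property of $V'\circ V^{-1}$ and H\"older for (iii)--(iv). One small remark: for (iii)--(iv) you argue \emph{nodewise}, i.e.\ you apply the Lipschitz estimate to the nodal values $V'(H_j)-V'(H_j^*)$ and then transfer to $L^2$/$L^\infty$ norms of the finite element functions via nodal-value norm equivalences; the paper instead writes the pointwise chain $|1/V'(H_h)-1/V'(H_h^*)|\le c_0^{-2}|V'(H_h)-V'(H_h^*)|\le c|H_h-H_h^*|$ as if $V'(H_h)$ were the pointwise composition $V'\circ H_h$, whereas by \eqref{eq:def V_h prime} it is the Lagrange interpolant of the nodal values $V'(\bfH_j)$. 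Your nodewise argument sidesteps this (harmless) imprecision and reaches the same bounds, so the two are interchangeable, but yours is the cleaner reading of the definition.
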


\begin{proof} The first step of the proof is similar to that of \cite[Lemma~4.1]{KLLP2017}. 	
	Using the fundamental theorem of calculus and the Leibniz formula \cite[Lemma~2.2]{DziukElliott_ESFEM} we obtain
	\begin{equation}
	\label{eq:estimate for first two est}
	\begin{aligned}
	&\ \bfw^T \big( \bfM(\bfx,\bfu)-\bfM(\bfy,\bfu) \big) \bfz \\
	= &\ \intGhx \frac{1}{(V_h')^1} \, w_h^1 z_h^1 
	- \int_{\Ga_h[\bfy]} \frac{1}{(V_h')^0} \, w_h^0 z_h^0 \\
	= &\ \int_0^1 \ddtheta \intGhtheta \frac{1}{(V_h')^\theta} \, w_h^\theta z_h^\theta \, \d\theta \\
	= &\ \int_0^1 \intGhtheta \frac{1}{(V_h')^\theta} \, w_h^\theta \, \big(\nablaGhx \cdot e_h^\theta\big) \, z_h^\theta \, \d \theta \\
	\leq &\ \int_0^1 \|1 / (V_h')^\theta \|_{L^\infty(\Ga_h^\theta)} \, \|w_h^\theta\|_{L^2(\Ga_h^\theta)} \, \|\big(\nablaGhx \cdot e_h^\theta\big)\|_{L^\infty(\Ga_h^\theta)} \, \|z_h^\theta\|_{L^2(\Ga_h^\theta)}  \, \d \theta , 
	\end{aligned}
	\end{equation}
	where we have used that the material derivatives of $w_h^\theta$, $z_h^\theta$ and $(V_h')^\theta$ are vanishing with respect to $\theta$. 
	By \cite[Lemma~7.2]{MCF} it follows then, that under the condition
	\begin{equation*}
	\| \nabla_{\Ga_h^0} e_h^0 \|_{L^\infty(\Ga_h^0)} \leq \frac{1}{2} ,
	\end{equation*}
	the $L^p(\Ga_h^\theta)$ norms and $W^{1,p}(\Ga_h^\theta)$ semi-norms are equivalent for all $0 \leq \theta \leq 1$. 
	%%%%%%%%%%%%%%%%%%%%%%%%%
	%By the norm equivalence \cite[Lemma~7.2]{MCF} and {bounds V-s} it follows
	%\begin{equation*}
	%\big\| ((V_h')^\theta)^{-1} \big\|_{\rL^\infty(\Gammahtheta)} 
	%\leq 
	%\big\| ((V_h')^*)^{-1} \big\|_{\rL^\infty(\Gammahxast)} \leq C ,
	%\end{equation*}
	%independently of $h$, while the following estimates were shown in (A.iii) of \cite[]{MCF}:
	%\begin{align*}
	%\| \dot{e}_u^\theta \|_{\rL^2(\Gammahtheta)} \leq &\ c \| \dot{e}_u \|_{\rL^2(\Gammahxast)} = c \| \doteu \|_{\bM^*} , \\
	%\| \nablaGhtheta e_x^\theta \|_{\rL^2(\Gammahtheta)} 
	%\leq &\ c \| \nablaGhxast e_x \|_{\rL^2(\Gammahxast)} = c \|\ex\|_{\bfA^*} , \\
	%\| \dbullet_h u_h^{\ast,\theta} \|_{\rL^\infty(\Gammahtheta)} \leq &\ c \| \dbullet_h u_h^{\ast} \|_{\rL^\infty(\Gammahxast)} \leq C .
	%\end{align*}
	%%%%%%%%%%%%%%%%%%%%%%%%%
	This, together with the the previous estimates imply the first estimate:
	\begin{equation*}
	\bfw^T \big( \bfM(\bfx,\bfu)-\bfM(\bfy,\bfu) \big) \bfz 
	\leq c \, \bbk \| \nabla_{\Gamma_h[\bfy]} e_h^0 \|_{L^\infty(\Gamma_h[\bfy])} \ebk \| \bfw \|_{\bfM(\bfy)} \, \| \bfz \|_{\bfM(\bfy)} ,
	\end{equation*}
	where the constant $c$ depends on the $L^\infty(\Ga_h[\bfy])$ norm of $1/(V_h')$.
	By interchanging the roles of $\nablaGhx \cdot e_h^\theta$ and $z_h$ in the last estimate of \eqref{eq:estimate for first two est}, and then using the same argument from above, we obtain the second bound.
	
	\btb 
	For the third estimate note that the functions $V'(H_h)$ and $V'(H_h^*)$ are bounded from below by some constant $c_0 > 0$. Since $[c_0,\infty) \to \R \colon x \mapsto x^{-1}$ is Lipschitz continuous and $V'$ is locally Lipschitz continuous, the composition $1/(V')$ is also locally Lipschitz continuous, therefore
		\begin{equation}
		\label{eq:estimate for second two est}
		\begin{aligned}
			\bfw \big( \bM(\bx,\bfu)-\bM(\bx,\us) \big) \bfz 
			%	= &\ \intGhx w_h \Big( ((V_h'))^{-1}  - ((V_h')^*)^{-1} \Big) z_h \\
			= &\ \intGhx w_h \bigg( \frac{1}{V'(H_h)}  - \frac{1}{V'(H_h^*)} \bigg) z_h \\
			\leq &\ \intGhx |w_h| \bigg| \frac{1}{V'(H_h)}  - \frac{1}{V'(H_h^*)} \bigg| |z_h| \\
			\leq &\ c_0^{-2} \intGhx |w_h|\big| V'(H_h) - V'(H_h^*) \big| |z_h| \\
			\leq &\ c \intGhx |w_h| \big| H_h - H_h^* \big| |z_h| \\
			\leq &\ c \| w_h \|_{L^2(\Ga_h[\bfx])} \|H_h - H_h^*\|_{L^2(\Ga_h[\bfx])} \|z_h\|_{L^\infty(\Ga_h[\bfx])} .
		\end{aligned}
	\end{equation}
	where the constant $c$ depends on $c_0$ and on the Lipschitz constant of $V'$.
	\etb
%	The third estimate is shown, using Lemma~\ref{lem.A.1} with $\gamma = -1$ and the local Lipschitz continuity of $V'$, by (momentarily dropping the superscript $\,^1$ for the finite element functions on $\Ga_h[\bfx]$)
%	\begin{equation}
%	\label{eq:estimate for second two est}
%	\begin{aligned}
%	\bfw \big( \bM(\bx,\bfu)-\bM(\bx,\us) \big) \bfz 
%%	= &\ \intGhx w_h \Big( ((V_h'))^{-1}  - ((V_h')^*)^{-1} \Big) z_h \\
%	= &\ \intGhx w_h \bigg( \frac{1}{V'(H_h)}  - \frac{1}{V'(H_h^*)} \bigg) z_h \\
%	\leq &\ \intGhx |w_h| \bigg| \frac{1}{V'(H_h)}  - \frac{1}{V'(H_h^*)} \bigg| |z_h| \\
%	\leq &\ c \intGhx |w_h| \big| V'(H_h^*) \big|^{-2} \big| V'(H_h) - V'(H_h^*) \big| |z_h| \\
%	\leq &\ c \intGhx |w_h| \big| V'(H_h^*) \big|^{-2} \big| H_h - H_h^* \big| |z_h| \\
%	\leq &\ c \| w_h \|_{L^2(\Ga_h[\bfx])} \|H_h - H_h^*\|_{L^2(\Ga_h[\bfx])} \|z_h\|_{L^\infty(\Ga_h[\bfx])} .
%	\end{aligned}
%	\end{equation}
%	where the constant $c$ depends on the $L^\infty(\Ga_h[\bfx])$ norm of $((V_h')^*)^{-1}$.
	
	Using the fact that the function $V$ is a smooth and invertible \eqref{eq:properties of V}, and recalling that $V_h = V(H_h)$ and $V_h^* = V(H_h^*)$, it follows
	\begin{equation*}
	\|H_h - H_h^*\|_{L^2(\Ga_h[\bfx])} \leq c \|V_h - V_h^*\|_{L^2(\Ga_h[\bfx])}
	\leq c \|u_h - u_h^*\|_{L^2(\Ga_h[\bfx])} = c \| \bfu - \us \|_{\bfM(\bfx)} .
	\end{equation*}
	Combining these inequalities we obtain
	\begin{equation*}
	\bfw^T \big( \bfM(\bfx,\bfu)-\bfM(\bfx,\us) \big) \bfz 
	\leq c \| \bfu - \us \|_{\bfM(\bfx)} \, \| \bfw \|_{\bfM(\bfx)} \|z_h^1\|_{L^\infty(\Ga_h[\bfx])} ,
	\end{equation*}
	where the constant depends on the $L^\infty(\Ga_h[\bfx])$ norm of $1 / (V_h')^*$. 
	Similarly as before, by interchanging the roles of $H_h - H_h^*$ and $z_h$ in the last estimate of \eqref{eq:estimate for second two est} we obtain the fourth estimate.
\end{proof}

\section{Stability of the  full discretization}
\label{section:stability-full}

In the following section we will prove a stability result for linearly implicit BDF discretisations. \bbk Due to the mentioned structural similarity between \eqref{eq:matrix-vector form} and \eqref{eq:MCF matrix-vector form} the proof is based on \cite[Proposition~10.1]{MCF}, with some substantial differences due to the solution-dependent mass matrix $\bfM(\bfx,\bfu)$. \ebk 

\subsection{Auxiliary results by Dahlquist and Nevanlinna \& Odeh}

We recall two important results that enable us to use energy estimates for BDF methods up to order 5: the first result is from Dahlquist's $G$-stability theory, and the second one from the multiplier technique of Nevanlinna and Odeh.

\begin{lemma}[\cite{Dahlquist}]
	\label{lemma: Dahlquist}
	Let $\delta(\zeta) = \sum_{j=0}^q \delta_j \zeta^j$ and $\mu(\zeta) = \sum_{j=0}^q \mu_j \zeta^j$ be polynomials of degree at most $q$ (at least one of them of degree $q$) that have no common divisor. Let $\la \, \cdot , \cdot \, \ra$ denote an inner product on $\R^N$. 
	%  with associated norm $\| \cdot \|$. 
	If
	\begin{equation*}
	\textnormal{Re} \frac{\delta(\zeta)}{\mu(\zeta)} > 0 \qquad \textrm{for} \quad |\zeta|<1,
	\end{equation*}
	then there exists a symmetric positive definite matrix $G = (g_{ij}) \in \R^{q\times q}$ %and real $\varrho_0,\dotsc,\varrho_q$ 
	such that for all $\bfw_0,\dotsc,\bfw_q\in\R^N$
	\begin{equation*}
	\Big\la \sum_{i=0}^q \delta_i \bfw_{q-i} , \sum_{i=0}^q \mu_i \bfw_{q-i}  \Big\ra \ge \sum_{i,j=1}^q g_{ij} \la \bfw_i , \bfw_j \ra - \sum_{i,j=1}^q g_{ij} \la \bfw_{i-1} , \bfw_{j-1} \ra .%+ \Big\| \sum_{i=0}^q \varrho_i _i \Big\|^2
	\end{equation*}
	%    holds.
\end{lemma}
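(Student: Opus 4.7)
The plan is to follow Dahlquist's classical argument: the result is an instance of the correspondence between positive-real rational functions on the unit disc and positive definite quadratic forms on finite sequences, which becomes concrete via the Fej\'er--Riesz factorization.

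First, I would reduce to the scalar case $N=1$. Since $\la\cdot,\cdot\ra$ is an inner product on $\R^N$, expanding each $\bfw_i$ in an orthonormal basis and summing componentwise shows that the stated vector-valued inequality follows as soon as it holds for scalar sequences $w_0,\dotsc,w_q \in \R$ with the ordinary product; the same $G$ can then be used. Thus it suffices to construct a single symmetric positive definite matrix $G \in \R^{q\times q}$.

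Next, in the scalar case, I would seek $G=(g_{ij})$ so that the symmetric quadratic form $Q(w_0,\dotsc,w_{q-1}) := \sum_{i,j=1}^q g_{ij}w_{i-1}w_{j-1}$ satisfies a telescoping inequality
\begin{equation*}
B_q[w_0,\dotsc,w_q] \;\ge\; Q(w_1,\dotsc,w_q) - Q(w_0,\dotsc,w_{q-1}),
\end{equation*}
where $B_q[w_0,\dotsc,w_q] := \bigl(\sum_{i=0}^q\delta_i w_{q-i}\bigr)\bigl(\sum_{i=0}^q\mu_i w_{q-i}\bigr)$. The construction of $Q$ proceeds by Fourier analysis on the unit circle. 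Using the no-common-divisor hypothesis to ensure that $\mu$ has no zeros in the open disc where the ratio is defined, the strict positivity $\re\,\delta(\zeta)/\mu(\zeta)>0$ for $|\zeta|<1$ implies by a boundary-value argument that the self-adjoint symbol $\Phi(\zeta) := \delta(\zeta)\overline{\mu(\zeta)}+\overline{\delta(\zeta)}\mu(\zeta)$ is non-negative on $|\zeta|=1$. By the Fej\'er--Riesz theorem, $\Phi$ factors as $2|p(\zeta)|^2$ for a polynomial $p$ of degree at most $q$, and the coefficients of $p$ determine $G$ via Parseval's identity applied to the generating polynomial $W(\zeta)=\sum_{i=0}^q w_i \zeta^i$.

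The main obstacle, and the technical heart of the argument, is verifying that the quadratic form appearing shifted by one index on the right-hand side is the \emph{same} $Q$, rather than two unrelated forms: this shift-invariance is exactly what makes the inequality telescoping in time, and it is a direct consequence of the Laurent-polynomial structure underlying the Fej\'er--Riesz factorization (multiplying $p$ by $\zeta$ acts as an index shift on the coefficients). The strict positivity of $\re\,\delta/\mu$ on the open disc (as opposed to mere non-negativity) is what upgrades $G$ from positive semi-definite to positive definite, since it prevents $\Phi$ from vanishing identically on arcs of the circle and hence ensures that the leading $p$ in the Fej\'er--Riesz factorization is non-degenerate. Since this is a classical result, I would ultimately refer to \cite{Dahlquist} for the complete technical execution.
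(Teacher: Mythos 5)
The paper does not prove this lemma; it states it with a citation to Dahlquist's 1978 G-stability paper, so there is no in-text argument to compare against. Your sketch captures the standard route (reduce to the scalar case by expanding in an orthonormal basis, verify $\Phi(\zeta)=2\,\re\bigl(\delta(\zeta)\overline{\mu(\zeta)}\bigr)\ge 0$ on $|\zeta|=1$ from the positive-real condition, apply Fej\'er--Riesz, and read off $G$), and deferring the technical execution to the source is appropriate for a cited classical result. Two of your supporting claims, however, do not hold as written. First, $G$ is not obtained from $p$ ``via Parseval's identity''; the actual mechanism is that the symmetric expression $\delta(\zeta)\mu(\eta)+\delta(\eta)\mu(\zeta)-2p(\zeta)p(\eta)$ vanishes on $\zeta\eta=1$ and is therefore divisible by $(1-\zeta\eta)$, the quotient being the generating polynomial $\sum_{i,j=1}^q g_{ij}\zeta^{i-1}\eta^{j-1}$ of $G$; it is precisely this $(1-\zeta\eta)$ factor, not Parseval, that produces the telescoping $\|\cdot\|_G^2$-difference you rightly identify as the crux. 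Second, your argument for positive definiteness of $G$ is not correct: $\Phi$ is a nonzero Laurent polynomial and so cannot vanish on arcs in any case, and for BDF methods $\Phi$ does vanish at the isolated boundary point $\zeta=1$, so non-degeneracy of $p$ is not the relevant invariant. Positive definiteness of $G$ instead hinges on irreducibility of $\delta$ and $\mu$ (the no-common-divisor hypothesis) through a rank argument on the quotient form, and this is the genuinely delicate point in Dahlquist's proof. Neither issue derails your plan, since you ultimately defer to the reference, but the heuristics offered for those two steps would mislead a reader trying to reconstruct the details.
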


In view of the following result, the choice $\mu(\zeta)=1-\eta\zeta$ together with the polynomial $\delta(\zeta)$ of the BDF methods will play an important role later on.
\begin{lemma}[\cite{NevanlinnaOdeh}]
	\label{lemma: NevanlinnaOdeh multiplier}
	If $q\leq5$, then there exists $0\leq\eta<1$ \st\ for $\delta(\zeta)=\sum_{\ell=1}^q \frac{1}{\ell}(1-\zeta)^\ell$,
	\begin{equation*}
	\textnormal{Re} \,\frac{\delta(\zeta)}{1-\eta\zeta} > 0 \qquad \textrm{for} \quad |\zeta|<1.
	\end{equation*}
	The smallest possible values of $\eta$ are found to be 
	$\eta= 0,  0,  0.0836, 0.2878,  0.8160$ 
	for $q=1,\dotsc,5$, respectively.
\end{lemma}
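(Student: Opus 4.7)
The plan is to invoke the minimum principle for harmonic functions on the open unit disk in order to reduce the strict positivity of $\textnormal{Re}\,\delta(\zeta)/(1-\eta\zeta)$ to a real trigonometric inequality on the unit circle, and then to analyse that inequality separately for each order $q=1,\dots,5$.

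First, I would observe that, since $0\le\eta<1$, the function $g(\zeta):=\delta(\zeta)/(1-\eta\zeta)$ has its only singularity at $\zeta=1/\eta\notin\overline D$, so it is analytic on an open neighbourhood of the closed unit disk $\overline D$. Thus $\textnormal{Re}\,g$ is harmonic on $|\zeta|<1$ and continuous on $\overline D$. Since $g(1)=\delta(1)/(1-\eta)=0$ but $g$ is not identically constant ($g(0)=\delta(0)=\sum_{\ell=1}^q 1/\ell>0$), the strong minimum principle reduces the claim to showing the \emph{non-strict} bound $\textnormal{Re}\,g\ge 0$ on $|\zeta|=1$: strict positivity in the interior then follows automatically, because an interior zero would force $\textnormal{Re}\,g\equiv 0$ and hence $g$ to be a purely imaginary constant, contradicting $g(0)>0$.

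Second, writing $\zeta=e^{i\theta}$ and clearing the positive denominator $|1-\eta e^{i\theta}|^2$, the boundary inequality becomes
\begin{equation*}
P_\eta(\theta)\;:=\;\textnormal{Re}\bigl[\delta(e^{i\theta})(1-\eta e^{-i\theta})\bigr]
\;=\;\sum_{j=0}^{q}\delta_j\bigl[\cos(j\theta)-\eta\cos((j-1)\theta)\bigr]\;\ge\;0,\qquad\theta\in[-\pi,\pi],
\end{equation*}
i.e.\ a real trigonometric polynomial depending affinely on $\eta$. Optimality of $\eta$ is then equivalent to $\min_\theta P_\eta(\theta)=0$. I would treat the cases $q=1,\dots,5$ in turn: for $q=1$ and $q=2$, classical $A$-stability of BDF1 and BDF2 gives $P_0(\theta)\ge 0$ directly (indeed, elementary identities yield $P_0(\theta)=1-\cos\theta$ and $(1-\cos\theta)^2$, respectively), so $\eta=0$ is admissible. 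For $q=3,4,5$, $A$-stability fails and $\eta>0$ is needed; the smallest admissible $\eta^\ast$ is determined by the tangency conditions $P_{\eta^\ast}(\theta^\ast)=0$ and $\partial_\theta P_{\eta^\ast}(\theta^\ast)=0$, a system in $(\eta,\theta)$ whose solution recovers the tabulated values $0.0836,\,0.2878,\,0.8160$. The second-derivative test $\partial_\theta^2 P_{\eta^\ast}(\theta^\ast)>0$, together with monotonicity of $P_\eta$ in $\eta$ at fixed non-minimising $\theta$, then confirms both optimality and global non-negativity.

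The main obstacle is the case $q=5$: here $\eta^\ast\approx 0.816$ is close to the forbidden value $\eta=1$, the denominator $|1-\eta e^{i\theta}|^2$ becomes very small near $\theta=0$, and only a narrow margin of positivity remains. A careful localisation of the minimiser $\theta^\ast$ away from $\theta=0$, combined with a Chebyshev-type rewriting of $P_\eta(\theta)$ in the variable $t=\cos\theta$, is needed to rule out spurious additional zeros elsewhere on the circle and to verify the claimed numerical value. This analysis breaks down at $q=6$ — no admissible $\eta<1$ exists — which is precisely why the lemma, and hence the stability theory developed in the paper, is restricted to $q\le 5$.
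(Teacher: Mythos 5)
The paper does not prove this lemma at all: it is stated with the citation key \texttt{[NevanlinnaOdeh]} and used as a black box, so there is no ``paper's own proof'' to compare against. Your blind reconstruction follows the standard route and is sound at the level of detail you give. The reduction to the boundary via the minimum principle is correct, with the two key observations in place: $g(\zeta)=\delta(\zeta)/(1-\eta\zeta)$ is analytic on a neighbourhood of $\overline D$ because its only pole lies at $1/\eta>1$, and $g(0)=\delta(0)=\sum_{\ell=1}^q 1/\ell>0$ rules out the degenerate case $\textnormal{Re}\,g\equiv 0$ that would otherwise be compatible with an interior zero of $\textnormal{Re}\,g$. The passage to the trigonometric polynomial $P_\eta(\theta)=\textnormal{Re}\bigl[\delta(e^{i\theta})(1-\eta e^{-i\theta})\bigr]$ is correct, and your explicit identities $P_0=1-\cos\theta$ for $q=1$ and $P_0=(1-\cos\theta)^2$ for $q=2$ check out, recovering $A$-stability as $\eta=0$.

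One refinement to your description of the case $q=3,4,5$: since $\delta(1)=0$, one has $P_\eta(0)=(1-\eta)\delta(1)=0$ and $\partial_\theta P_\eta(0)=0$ for \emph{every} $\eta\in[0,1)$, so $\theta=0$ is always at least a double zero of $P_\eta$ and does not by itself pin down $\eta^\ast$. The tangency system $P_{\eta^\ast}(\theta^\ast)=0$, $\partial_\theta P_{\eta^\ast}(\theta^\ast)=0$ that you invoke must therefore be understood as holding at some secondary minimiser $\theta^\ast\neq 0$ (which is indeed what happens for $q\ge 3$); alternatively, the failure mode can be tracked through the sign of $\partial_\theta^2 P_\eta(0)$. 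With that understanding, and granting the numerical verification that you leave implicit, your argument is a valid reconstruction of the classical proof of Nevanlinna and Odeh, supplemented by the later computation of the optimal multiplier constants.
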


These results have previously been applied in the error analysis of BDF methods, in particular for mean curvature flow \cite{MCF}, and also for various parabolic problems in
\cite{AkrivisLiLubich_quasilinBDF,AkrivisLubich_quasilinBDF,KL2018,KovacsPower_quasilinear,LubichMansourVenkataraman_bdsurf,LLG}, where they were used when testing the error equation with the error. Similarly as in \cite{MCF} and \cite{LLG}, and in contrast to the other references above, here these results are used for testing the error equation with the discretized time derivative of the error. 

For the six-step BDF method a new and intriguing energy approach was recently introduced in \cite{Akrivis_etal_BDF6}.

\subsection{Errors and defects}

We define the nodal vectors $\xs(t) \in \R^{3N}$ and $\vs\t \in \R^{3N}$ by collecting the values of the exact solution $X(\cdot,t)$ and $v(\cdot,t)$, respectively, at the finite element nodes.
The vector $\us(t)$ contains the nodal values of the finite element function $u_h^*(\cdot,t) := \widetilde{R}_h u(\cdot,t) \in S_h[\xs(t)]^4$ that is defined by a Ritz map on the interpolated surface $\Gamma_h[\xs(t)]$:
\begin{equation}
\label{eq:Ritz map def}
	\int_{\Ga_h[\xs]} \!\! \!\!\!\! \nb_{\Gamma_h[\xs]} u_h^* \cdot \nabla_{\Gamma_h[\xs]} \varphi_h 
	+ \int_{\Ga_h[\xs]} \!\!\!\!  u_h^* \cdot \varphi_h 
	= \int_{\Ga[X]} \!\! \!\!\!\! \nabla_{\Gamma[X]} u \cdot \nabla_{\Gamma[X]} \varphi_h^\ell 
	+ \int_{\Ga[X]} \!\!\!\! u \cdot  \varphi_h^\ell
\end{equation}
for all $\varphi_h \in S_h[\xs]^4$, where again $\varphi_h^\ell$ denotes the lift \bbk of the function $\varphi_h$ onto $\Gamma[X]$. \ebk 

%We recall from Section~\ref{eq:semi-discrete errors and defects}, that the nodal vectors $\xs\t \in \R^{3N}$, $\vs\t \in \R^{3N}$ and $\us\t \in \R^{3N}$ denote the nodal vectors of the finite element interpolations to the exact solution $X(\cdot,t)$ and $v(\cdot,t)$, respectively, while the vector $\us(t)$ contains the nodal values of the Ritz map of $u=(\nu,V)^T$ on the interpolated surface $\Gamma_h[\xs(t)]$. We abbreviate $\bfx_*^n=\xs(t_n)$, $\bfv_*^n=\vs(t_n)$, and $\bfu_*^n=\us(t_n)$. 

We insert these values into the numerical scheme, and obtain defects $\dv^n$, $\du^n$, $\dx^n$: for $n\ge q$,
\begin{subequations}
	\label{eq:BDF defect definitions}
	\begin{align}
	\bfK(\widetilde \bfx_*^n) \bfv_*^n &= \bfg(\widetilde \bfx_*^n,\widetilde \bfu_*^n) +  \bfM(\wtxls^n)  \dv^n, \\
	\bfM(\widetilde \bfx_*^n,\widetilde \bfu_*^n)\dot \bfu_*^{n} + \bfA(\widetilde \bfx_*^n) \bfu_*^n &= \bff(\widetilde \bfx_*^n,\widetilde \bfu_*^n) +  \bfM(\wtxls^n)  \du^n,  \\
	\dot \bfx_*^{n} &= \bfv_*^n + \dx^n ,
	\end{align}
\end{subequations}
where the backward difference time derivatives and the extrapolated values are given by \eqref{eq:backward differences def} and \eqref{eq:extrapolation def}.

\subsubsection{An $L^\infty$ error estimates and bound for the Ritz map}
\label{section:Ritz map bounds - semidiscrete}

In the upcoming stability proof, we will need some $L^\infty$-norm estimates for the Ritz map of $V$. This preparatory section is devoted to the proof of these estimates.

Recalling, form \eqref{eq:Ritz map def}, that $V_h^*(\cdot,t) = \widetilde{R}_h V(\cdot,t) \in S_h[\xs\t]$ is the Ritz map of $V(\cdot,t)$, and we denote, (omitting the argument $t$), $(V_h^*)^\ell = (\widetilde{R}_h V)^\ell = R_h V \in H^1(\Ga[X])$. Then using multiple triangle inequalities and an inverse estimate \cite[Theorem~4.5.11]{BrennerScott} \bbk with dimension $d = 2$, \ebk and norm equivalences, we obtain, for $t \in [0,T]$ and $h \leq h_0$,
\begin{equation}
\label{eq:Linfty error of Ritz map of Vh*}
	\begin{aligned}
		\|(V_h^*)^\ell - V\|_{L^\infty(\Ga[X])} 
		\leq &\ c \|\widetilde{R}_h V - \widetilde{I}_h V\|_{L^\infty(\Ga_h[\xs])} + \|I_h V - V\|_{L^\infty(\Ga[X])} \\
		\leq &\ c h^{-d/2} \|\widetilde{R}_h V - \widetilde{I}_h V\|_{L^2(\Ga_h[\xs])} + \|I_h V - V\|_{L^\infty(\Ga[X])} \\
		\leq &\ c h^{-d/2} \Big( \|R_h V - V\|_{L^2(\Ga[X])} + \|V - I_h V\|_{L^2(\Ga[X])} \Big) + \|I_h V - V\|_{L^\infty(\Ga[X])} \\
		\leq &\ c h^{2-d/2} \|V\|_{H^{2}(\Ga[X])} + c h^2 \|V\|_{W^{2,\infty}(\Ga[X])} ,
	\end{aligned}
\end{equation}
where for the last estimate, we have used the (sup-optimal) error bounds for the Ritz map in the $H^1$ norm, see \cite[Theorem~6.2]{Kovacs2018}, and error estimates for the interpolation in the $H^1$ and the in the $L^\infty$ norm, see Proposition~2.7 in \cite{Demlow2009}, with $m=2$ for $p=2$ and $p=\infty$ therein, respectively.

Using the equivalence of the $L^\infty$ norms on $\Ga_h[\xs]$ and $\Ga[X]$, via \cite[Lemma~7.2]{MCF}, the error bound \eqref{eq:Linfty error of Ritz map of Vh*} and a reverse triangle inequality, we then immediately obtain, for $t \in [0,T]$ (again omitted as an argument), $h \leq h_0$,  \bbk and with dimension $d = 2$, \ebk
\begin{equation}
\label{eq:Linfty bound for Vh*}
	\begin{aligned}
		\|V_h^*\|_{L^\infty(\Ga_h[\xs])} 
		\leq &\ c \|(V_h^*)^\ell\|_{L^\infty(\Ga[X])} \\
		\leq &\ c\|(V_h^*)^\ell - V\|_{L^\infty(\Ga[X])} + c\|V\|_{L^\infty(\Ga[X])} \\
		\leq &\ c h^{2-d/2} \|V\|_{H^{2}(\Ga[X])} + (c h^2 + c) \|V\|_{W^{2,\infty}(\Ga[X])} 
		\leq M , 
	\end{aligned}
\end{equation}
with an $M>0$ independent of $h$ and $t$.

\subsubsection{Two estimates for the extrapolation}

Analogously to the time-continuous stability proof, in the proof of fully discrete stability we will need $L^\infty$-norm estimates (now) for the extrapolation of the Ritz map of $u$. These preparatory estimates will play analogous roles as those in Section~\ref{section:Ritz map bounds - semidiscrete}, and are proved below.

We first derive an estimate for the error in the extrapolation of the Ritz map of the exact normal velocity. Using the Peano kernel representation (see \cite[Section~3.2.6]{Gautschi}) of the extrapolation error, analogously to the proof of Lemma~4.3 in \cite{KL2018}, we obtain
\begin{equation}
\label{eq:Linfty error of Ritz map of extrapolated Vh*}
	\begin{aligned}
		\|((V_h^*)^n)^\ell - ((\widetilde{V}_h^*)^n)^\ell\|_{L^\infty(\Ga[X(\cdot,t_n)])} 
		= \Big\|((V_h^*)^n)^\ell - \Big(\sum_{j=0}^{q-1} \gamma_j (V_h^*)^{n-1-j}\Big)^\ell \Big\|_{L^\infty(\Ga[X(\cdot,t_n)])} 
		\leq &\ c \tau^q .
	\end{aligned}
\end{equation}

Furthermore, we prove that the extrapolations at subsequent times $t_n$ and $t_{n-1}$ of the Ritz map of $u(\cdot,t)$ are $L^\infty$-norm bounded by $O(\tau)$, similarly as in the proof of Lemma~8.1 in \cite{LLG}. Namely, the following estimate holds:
\begin{equation}
\label{eq:comparing Ritz maps in time}
	\begin{aligned}
		\|\widetilde{u}_*^n - \widetilde{u}_*^{n-1}\|_{L^{\infty}(\Ga_h[\wtx^{n-1}])}
		\leq &\ \sum_{j=0}^{k-1} |\gamma_j| \int_{t_{n-j-2}}^{t_{n-j-1}} \|\mat_h \widetilde{R}_h u(\cdot,s)  \|_{L^{\infty}(\Ga_h[\wtx^{n-1}])} \d s 
		\leq c \tau ,
	\end{aligned}
\end{equation}
where in the last estimate to show the boundedness of $\mat_h \widetilde{R}_h u$, we have used a similar argument as \eqref{eq:Linfty error of Ritz map of Vh*}, but here using the error estimates in the material derivative of the Ritz map \cite[Theorem~6.4]{Kovacs2018}.

\subsubsection{Fully discrete error equations}
The errors of the numerical solution $\bfx^n$, $\bfv^n$ and $\bfu^n = (\bfn^n,\bfV^n)^T$ are denoted by
\begin{align*}
\ex^n = \bfx^n - \xls^n , \qquad \ev^n = \bfv^n - \vls^n, \qquad \eu^n = \bfu^n - \uls^n,
\end{align*}
and we abbreviate
\begin{equation}
\label{dotexun}
\dotex^n = \frac{1}{\tau} \sum_{j=0}^q \delta_j \ex^{n-j}, \qquad 
\doteu^n = \frac{1}{\tau} \sum_{j=0}^q \delta_j \eu^{n-j} .
\end{equation}
Subtracting \eqref{eq:BDF defect definitions} from \eqref{eq:BDF}, we obtain the following error equations:
\begin{subequations} 
	\label{eq:error equations - full}
	\begin{align}
	\label{eq:error eq - v - full}
	\bfK(\widetilde \bfx^n)  \ev^n &= \bfr_\bfv^n ,	\\
	\label{eq:error eq - u - full}
	\bfM(\widetilde \bfx^n,\widetilde \bfu^n)\doteu^n+ \bfA(\widetilde \bfx^n)\eu^n &= \bfr_\bfu^n , \\
	\label{eq:error eq - x - full}
	\dotex^n &= \ev^n - \dx^n , 
	\end{align}
\end{subequations}
where the right-hand side terms denote
\begin{subequations} 
	\label{eq:error terms - full}
	\begin{align}
	\label{eq:error term - v - full}
	\bfr_\bfv^n
	=&\ -  \big( \bfK(\widetilde \bfx^n)-\bfK(\widetilde \bfx_*^n) \big) \vls^n \\
	\nonumber 
	&\ + \big(\bfg(\widetilde \bfx^n,\widetilde \bfu^n) - \bfg(\widetilde \bfx_*^n,\widetilde \bfu_*^n)\big) - \bfM(\wtxls^n) \dv^n , \\
	\label{eq:error term - u - full}
	\bfr_\bfu^n
	=&\ -  \big( \bfM(\widetilde \bfx^n,\widetilde \bfu^n)-\bfM(\widetilde \bfx^n,\widetilde \bfu_*^n) \big) \dot\bfu_*^n  \\
	\nonumber
	&\ -  \big( \bfM(\widetilde \bfx^n,\widetilde \bfu_*^n)-\bfM(\widetilde \bfx_*^n,\widetilde \bfu_*^n) \big) \dot\bfu_*^n  \\
	\nonumber
	&\ - \big( \bfA(\widetilde \bfx^n)-\bfA(\widetilde \bfx_*^n) \big) \bfu_*^n \\
	\nonumber &\ + \big(\bff(\widetilde \bfx^n,\widetilde \bfu^n) - \bff(\widetilde \bfx_*^n,\widetilde \bfu_*^n)\big) -  \bfM(\wtxls^n)  \du^n .
	\end{align}
\end{subequations}  

In the sequel we need the following discrete dual norm
\begin{equation*}
	\| \bfd \|_{\star,\xls^n}^2 := \bfd^T \bfM(\xls^n) \bfK(\xls^n)^{-1} \bfM(\xls^n) \bfd. 
\end{equation*}

\subsection{Stability estimate}

The following fully discrete stability result holds for the errors in the positions $\ex^n$, in the velocity $\ev^n$, and in the geometric variables $\eu^n = (\bfe_\bfn^n,\bfe_\bfV^n)^T$, provided that the defects are small enough.

The basic idea of the proof of this result is the same as for the stability result for mean curvature flow \cite[Proposition~10.1]{MCF}, however there are substantial differences due to the solution-dependent mass matrix which have to be addressed carefully.

%The following fully discrete stability result is a time discrete analogue of Proposition~\ref{proposition:stability}, and a generalization of \cite[Proposition~10.1]{MCF} to generalized mean curvature flow.

\begin{proposition}
	\label{proposition:stability - full} 
	Assume that the function $V$ satisfies the assumptions in \eqref{eq:properties of V}.
	Consider the \bbk full discretisation of generalized mean curvature flow using evolving surface finite elements of degree $k \geq 2$ in space, \ebk and linearly implicit BDF time discretization \eqref{eq:BDF} of order $q$ with $2 \leq q \leq 5$ \bbk in time. \ebk  
	Assume that, for step sizes restricted by $\tau \le \Co h$ (where $\Co > 0$ is arbitrary), there exists $\kappa$ with $2 \leq \kappa \leq k$ such that the defects are bounded by 
	\begin{equation}
	\label{eq:assume small defects}
		\|\dx^n\|_{\bfK(\xls^n)} \leq c h^\kappa , \quad
		\|\dv^n\|_{\star,\xls^n} \leq c h^\kappa , \quad 
		\|\du^n\|_{\bfM(\xls^n)} \leq c h^\kappa ,
	\end{equation} 
	for $q\tau \leq n\tau \leq T$, and that also the errors of the starting values are bounded by
	\begin{equation}
	\label{eq:assume small initial values}
		\|\ex^i\|_{\bfK(\xls^i)} \leq c h^\kappa , \quad
		\|\ev^i\|_{\bfK(\xls^i)} \leq c h^\kappa , \quad 
		\|\eu^i\|_{\bfK(\xls^i)} \leq c h^\kappa , \quad
	\end{equation} 
	for $i=0,\dots,q-1$, and, with the notation $\be \ex^i = (\ex^i-\ex^{i-1})/\tau$, for $i = 1,\dotsc,q-1$, 
	\begin{equation}
	\label{eq:assume small initial values 2}
	\tau^{1/2} \| \be \ex^i \|_{\bfK(\bfx^{i})}  \leq c h^\kappa .
	\end{equation} 
	
	Then, there exist  $h_0>0$ and $\tau_0>0$ such that the following stability estimate holds for all $h\leq h_0$, $\tau\le\tau_0$, and $n$ with $n\tau \le T$, satisfying $\tau \leq \Co h$,
	\begin{equation}
	\label{eq:stability bound - full}
	\begin{aligned}
	& \| \ex^n \|_{\bfK(\xls^n)}^2 +  \| \ev^n \|_{\bfK(\xls^n)}^2 +  \| \eu^n \|_{\bfK(\xls^n)}^2 \\
	& \leq \ C  \sum_{i=0}^{q-1}\Bigl( \| \ex^i \|_{\bfK(\xls^i)}^2 + \| \ev^i \|_{\bfK(\xls^i)}^2 + \| \eu^i \|_{\bfK(\xls^i)}^2\bigr) + C \tau \sum_{i=1}^{q-1} \| \be \ex^i \|_{\bfK(\bfx^{i})}^2 \\
	& \quad +C \max_{0\le j \le n}\|\dv^j\|_{\star,\xls^j}^2  
	+ C \tau\sum_{j=q}^{n} \|\du^j\|_{\bfM(\xls^j)}^2 + C  \tau\sum_{j=q}^{n} \|\dx^j\|_{\bfK(\xls^j)}^2,
	\end{aligned}
	\end{equation}
	where the constant $C > 0$ is independent of $h$, $\tau$ and $n$ with $n\tau \leq T$, but depends on the final time $T$.
\end{proposition}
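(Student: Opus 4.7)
The plan is to follow the energy-testing framework of \cite[Proposition~10.1]{MCF}, adapted to handle the solution-dependent mass matrix $\bfM(\bfx,\bfu)$, and carried out inside a bootstrap induction on $n$. Specifically, I will assume as an inductive hypothesis that, up to time $t_{n-1}$, the errors satisfy
\[
\|\ex^j\|_{\bfK(\xls^j)} + \|\ev^j\|_{\bfK(\xls^j)} + \|\eu^j\|_{\bfK(\xls^j)} \le h^{(\kappa+1)/2},
\qquad j \le n-1,
\]
and then, using the inverse estimate together with the CFL-type restriction $\tau\le \Co h$, upgrade these $H^1$ bounds on the lifted errors to $W^{1,\infty}$ bounds. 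This in turn allows me to invoke \eqref{matrix difference bounds}, \eqref{matrix difference bounds e_x} and, crucially, the new Lemma~\ref{lemma:nonlinear mass matrix diff} to estimate the differences of solution-dependent mass matrices. The $W^{1,\infty}$ control on $\bfe_\bfV$, combined with the $L^\infty$-bound \eqref{eq:Linfty bound for Vh*} on $V_h^*$, produces uniform positive upper and lower bounds on $V_h' = V'(H_h)$ and on $(V_h^*)'$; this is the prerequisite to apply Lemma~\ref{lemma:nonlinear mass matrix diff} and to keep $\bfM(\wtx^n,\wtu^n)$ uniformly equivalent to $\bfM(\wtx^n)$.

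The core energy argument then proceeds as follows. For \eqref{eq:error eq - u - full}, I test with the Nevanlinna--Odeh multiplied discretized time derivative $\doteu^n - \eta\, \doteu^{n-1}$, and use Lemma~\ref{lemma: Dahlquist} together with Lemma~\ref{lemma: NevanlinnaOdeh multiplier} to produce a telescoping $G$-norm quantity built from $\bfA(\wtx^n)$-norms of $\eu^n,\dots,\eu^{n-q+1}$. The matrix $\bfM(\wtx^n,\wtu^n)$ in front of $\doteu^n$ generates the positive term $\|\doteu^n\|_{\bfM(\wtx^n,\wtu^n)}^2$, which by the norm-equivalence established above dominates $c\|\doteu^n\|_{\bfM(\wtx^n)}^2$. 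Analogously to \cite[Prop.~10.1]{MCF}, the time derivative of the stiffness term is absorbed using \eqref{matrix derivatives}, now applied at time $t_n$ with the help of the identity $\wtx^n - \wtx^{n-1} = O(\tau)\,\wtv^n$ coming from the extrapolation. The new feature is the extra right-hand side term $\big(\bfM(\wtx^n,\wtu^n) - \bfM(\wtx^n,\wtu_*^n)\big)\dot\bfu_*^n$, which by Lemma~\ref{lemma:nonlinear mass matrix diff}(iv) is bounded by $c\,\|\wteu^n\|_{\bfM(\wtx^n)} \|\doteu^n\|_{\bfM(\wtx^n)} \,\|\dot u_*^n\|_{L^\infty}$; combined with the extrapolation estimate \eqref{eq:comparing Ritz maps in time} and an extrapolation/stability bound in terms of $\eu^{n-1},\dots,\eu^{n-q}$, this is absorbed on the left.

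For the velocity equation \eqref{eq:error eq - v - full} I use the Ritz-type coercivity of $\bfK(\wtx^n)$ exactly as in \cite{MCF} to extract $\|\ev^n\|_{\bfK(\wtx^n)}$ from the dual-norm-controlled right-hand side, obtaining a bound in terms of $\|\wtex^n\|_{\bfK(\wtx^n)}$, $\|\wteu^n\|_{\bfK(\wtx^n)}$ and $\|\dv^n\|_{\star,\xls^n}$. The position error is then controlled via \eqref{eq:error eq - x - full} by a Gronwall-type summation, once the BDF difference $\dotex^n = \ev^n - \dx^n$ is inverted using again Lemmas~\ref{lemma: Dahlquist}--\ref{lemma: NevanlinnaOdeh multiplier} applied to $\|\ex^n\|_{\bfK(\xls^n)}^2$; the initial terms $\tau\sum_{i=1}^{q-1}\|\be\ex^i\|_{\bfK(\bfx^i)}^2$ in \eqref{eq:stability bound - full} appear precisely at this step. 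Summing the three resulting inequalities with appropriate weights, absorbing all higher-order-in-$\tau$ terms and applying a discrete Gronwall inequality yields \eqref{eq:stability bound - full}.

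The main obstacle is the bootstrap for the $W^{1,\infty}$ bounds in the presence of the non-linear mass matrix: the constants hidden in Lemma~\ref{lemma:nonlinear mass matrix diff} depend on the $L^\infty$ bounds of $1/V_h'$ and on the Lipschitz constants of $V'$ on the range of $H_h$, and these are only available \emph{after} a small enough $H^1$-error has been established. I close the loop by showing that, within one BDF step, the inductive $W^{1,\infty}$ bound propagates: the gain of a factor $h^{(\kappa+1)/2-\kappa} = h^{(1-\kappa)/2} \cdot h^\kappa$ from the new step estimate versus the inverse-inequality threshold is absorbed by choosing $h_0$ (and hence $\tau_0 = \Co h_0$) sufficiently small, provided $\kappa \ge 2$, which is exactly the hypothesis. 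This lets the induction proceed to $n$ and completes the proof.
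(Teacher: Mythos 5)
Your plan coincides with the paper's proof in all essential respects: the bootstrap (you phrase it as induction on $n$, the paper as a maximal time $t^*$, but these are the same device), the derivation of uniform upper/lower bounds on $\widetilde{V}_h'$ via the $W^{1,\infty}$ control and inverse estimates, the Nevanlinna--Odeh/Dahlquist energy testing with $\doteu^n$, the use of Lemma~\ref{lemma:nonlinear mass matrix diff} to handle the new mass-matrix difference terms, the $\bfK$-coercivity argument for the velocity equation, and the Gronwall conclusion.

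One technical ingredient you do not mention and which does require its own preparatory work: the Nevanlinna--Odeh multiplier produces the cross term $-\eta\,(\doteu^n)^T\Mxu[n-1]\doteu^{n-1}$, and to absorb it one needs the $h$- and $\tau$-uniform equivalence of the \emph{solution-dependent} norms $\|\cdot\|_{\Mxu[n]}$ and $\|\cdot\|_{\Mxu[n-1]}$ at consecutive time levels, cf.~\eqref{norm-sol-equiv-BDF}. In the MCF setting the analogue is the purely geometric \eqref{matrix difference bounds-t}, but here one must additionally control the change of $\widetilde{u}$ between time steps; the paper does this by combining Lemma~\ref{lemma:nonlinear mass matrix diff}~(i) and (iii) with the Ritz-map comparison \eqref{eq:comparing Ritz maps in time}. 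Without this, the lower bound \eqref{M-lower bound} on the mass terms does not close. Also, as a minor point, the arithmetic in your final paragraph is garbled: $h^{(\kappa+1)/2-\kappa}=h^{(1-\kappa)/2}$, and the bootstrap closes because the stability estimate delivers $O(h^\kappa)$ which beats your $H^1$ threshold $h^{(\kappa+1)/2}$ exactly when $\kappa>1$; the stronger hypothesis $\kappa\ge 2$ is used elsewhere as well.
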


\bbk 
In Section~\ref{section:consistency - full} we will prove that the defects satisfy a $O(h^k + \tau^q)$ bound (with $k \geq 2$ and $2 \leq q \leq 5$), and in view of the mild step size restriction $\tau \leq \Co h$ the assumed bounds \eqref{eq:assumed bounds - n} indeed hold with $\kappa = \min\{k,q\} \geq 2$.
\ebk 

\begin{proof}
	\bbk Similarly, as in \cite{MCF}, the stability proof uses energy estimates in the matrix--vector formulation, and relies mostly on the preparatory results of Section~\ref{section:relating surfaces} (with $\bfx$ and $\xs$ in the role of $\bfx$ and $\bfy$).  
	The proof is based on energy estimates testing with the discrete time-derivative $\doteu^n$, relying on the $G$-stability theory of Dahlquist (Lemma~\ref{lemma: Dahlquist}), and the multiplier techniques of Nevanlinna and Odeh (Lemma~\ref{lemma: NevanlinnaOdeh multiplier}).
	By this we obtain uniform-in-time $H^1$-norm error bounds, which allow a control in the $W^{1,\infty}$-norm of the errors via an inverse estimate.
	
	As we have noted before algorithm \eqref{eq:BDF} for the generalized mean curvature flow is very similar to that of mean curvature flow \cite{MCF}. In particular, the error equations (\eqref{eq:error equations - full} with \eqref{eq:error terms - full}) are the same except the solution-dependent mass terms in \eqref{eq:error eq - u - full}. 
	Therefore, the proof of this result is also closely related to that of Proposition~10.1 in \cite{MCF}. Repeating estimates for identical terms would not yield any extra insight compared to the original proof in \cite{MCF}, and hence are only recalled therefrom. 
	Due to the mentioned similarities, the proof below has the same structure as the proof of Proposition~10.1 in \cite{MCF}. Corresponding estimates are carried out in corresponding parts. 
	
	A key difference in the proofs is that the present proof works with the solution dependent norm $\|\cdot\|_{\Mxu}$ instead of $\|\cdot\|_{\bfM(\wtx^n)}$. This, simple looking, yet crucial difference requires extra care during the stability analysis. 
	In particular the estimates for the solution-dependent mass terms require $L^\infty$-norm bounds on the weight function $V_h'$ in \eqref{eq:solution-dependent mass matrix}.
	
	Throughout the proof we will use the following conventions: References to the proof Proposition~10.1 in \cite{MCF} are abbreviated to \cite{MCF}, unless a specific reference therein is given. For example, (i) in part (A) of the proof of Proposition~10.1 of \cite{MCF} is referenced as \cite[(A.i)]{MCF}. 
	By $c$ and $C$ we will denote generic $h$- and $\tau$-independent constants, which might take different values on different occurrences. \ebk 
	
	\emph{Preparations:} Let $t^*$ with $0 < t^* \leq T$ (which {\it a priori} might depend on $\tau$ and $h$) be the maximal time such that the following inequalities hold: 
	\begin{equation}
	\label{eq:assumed bounds - n}
	\begin{aligned}
	\| e_x^n \|_{W^{1,\infty}(\Ga_h[\xls^n])} &\leq  h^{(\kappa-1)/2} , \\
	\| e_v^n \|_{W^{1,\infty}(\Ga_h[\xls^n])} &\leq  h^{(\kappa-1)/2} , \\
	\| e_u^n \|_{W^{1,\infty}(\Ga_h[\xls^n])} &\leq  h^{(\kappa-1)/2} , 
	\end{aligned} \qquad \textrm{ for } \quad n \tau \leq t^* .
	\end{equation}
	Note that the by the smallness condition for the errors in the initial data we have, at least, $t^* = q \tau$.
	At the end of the proof we will show that in fact $t^* = T$.

	Through a series of bounds we now show $L^\infty$-norm bounds for $((\widetilde V_h')^\ast)^n$ and $(\widetilde V_h')^n$, which are crucial to estimate the solution-dependent mass terms.
	A similar argument was necessary in the proof of Proposition~7.1 and 7.2 in \cite{LLG}.
	
	In particular, the third bound \bbk from \eqref{eq:assumed bounds - n} \ebk implies, using the equivalence of the $L^\infty$ norms, for $h \leq h_0$ sufficiently small and $n \tau \leq t^*$,
	\begin{equation}
	\label{eq:V Linfty difference - fully discrete}
		\begin{aligned}
			\| \widetilde{V}_h^n - (\widetilde{V}_h^*)^n \|_{\rL^\infty(\Ga_h[\xls^n])}
			\leq &\ \sum_{j=0}^{q-1} |\gamma_j| \| \widetilde{V}_h^{n-1-j} - (\widetilde{V}^*_h)^{n-1-j} \|_{\rL^\infty(\Ga_h[\xls^{n-1-j}])} \\
			\leq &\ \sum_{j=0}^{q-1} |\gamma_j| \| e_u^{n-1-j} \|_{\rW^{1,\infty}(\Ga_h[\xls^{n-1-j}])} \leq C_\gamma h^{(\kappa-1)/2} ,
		\end{aligned}
	\end{equation}
	with $C_\gamma = \sum_{j=0}^{q-1} |\gamma_j| = 2^q - 1$.
	
	The $L^\infty(\Ga_h[\xls^n])$ boundedness of $\widetilde{V}_h^n$ is obtained by combining \eqref{eq:Linfty bound for Vh*} and \eqref{eq:V Linfty difference - fully discrete}: for $h \leq h_0$ and for $n \tau \leq t^*$,
	\begin{equation}
	\label{eq:Linfty bound for Vh - fully discrete}
		\begin{aligned}
			\|\widetilde{V}_h^n\|_{L^\infty(\Ga_h[\xls^n])} \leq &\ 
			\|(\widetilde{V}_h^*)^n\|_{L^\infty(\Ga_h[\xls^n])} + \|\widetilde{V}_h^n - (\widetilde{V}_h^*)^n\|_{L^\infty(\Ga_h[\xls^n])} \\
			\leq &\ C_\gamma M + C_\gamma h^{(\kappa-1)/2} \leq 
			(C_\gamma+1) M .
		\end{aligned}
	\end{equation}
	
	Recall that $\widetilde{V}_h^n = V(\widetilde{H}_h^n)$, which defines the curvature data $\widetilde{H}_h^n$ by inverting the function $V$, \bbk since $\widetilde{V}_h^n \in I$ (see \eqref{eq:properties of V}). \ebk Note, however, that $\widetilde{H}_h^n$ is not an extrapolation for the mean curvature, but merely a suggestive notation, expressing its relation to $\widetilde{V}_h^n$.
	
	Then, by the local Lipschitz continuity of the function $V^{-1}$ (see~\eqref{eq:properties of V}), we have, for $n \leq n^*$,
	\begin{equation}
	\label{eq:H Linfty difference - fully discrete}
	\begin{aligned}
	C_\gamma h^{(\kappa-1)/2} \geq &\ 
	\| \widetilde{V}_h^n - (\widetilde{V}_h^*)^n \|_{\rL^\infty(\Ga_h[\xls^n])} 
	= \| V(\widetilde{H}_h^n) - V((\widetilde{H}_h^*)^n) \|_{\rL^\infty(\Ga_h[\xls^n])} \\
	\geq &\ \frac{1}{L_{2cM}^{V^{-1}}} \big\| V^{-1}\big(V(\widetilde{H}_h^n)\big) - V^{-1}\big(V((\widetilde{H}_h^*)^n)\big) \big\|_{\rL^\infty(\Ga_h[\xls^n])} \\
	= &\ \frac{1}{L_{2cM}^{V^{-1}}} \big\| \widetilde{H}_h^n - (\widetilde{H}_h^*)^n \big\|_{\rL^\infty(\Ga_h[\xls^n])} 
	= 
	c \| e_{\widetilde{H}}^n\|_{\rL^\infty(\Ga_h[\xls^n])} .
	\end{aligned}
	\end{equation}
	where $L_{2cM}^{V^{-1}}$ depends on the $L^\infty(\Ga_h[\xls^n])$ norms of $\widetilde{V}_h^*$ and $\widetilde{V}_h$, \eqref{eq:Linfty bound for Vh*} and \eqref{eq:Linfty bound for Vh - fully discrete}.
	
	As for the semi-discrete case, since the mean curvature $H$ is assumed to be time-uniformly bounded from above and below~\eqref{eq:bounds on mean curvature}, using the local Lipschitz continuity of $V^{-1}$ as in \eqref{eq:H Linfty difference - fully discrete} we derive $h$-uniform bounds on the Ritz map of the mean curvature.
	By the definition of the lift map, we have the equality $\widetilde{H}_h^*(x,t) = (\widetilde{H}_h^*)^\ell(x^\ell,t)$, for any time $t$ and for any $x \in \Ga_h[\xs\t]$. Then, by the triangle inequality, we obtain \bbk (with dimension $d = 2$) \ebk 
	\begin{align*}
	|(\widetilde{H}_h^*)^n| \leq &\ |H(\cdot,t_n)| + |H(\cdot,t_n) - ((\widetilde{H}_h^*)^n)^\ell| \\
	\leq &\ |H(\cdot,t_n)| + \|H(\cdot,t_n) - ((\widetilde{H}_h^*)^n)^\ell\|_{L^\infty(\Ga[X(\cdot,t_n)])} \\
	\leq &\ H_1 + c \|V(\cdot,t_n) - ((V_h^*)^n)^\ell\|_{L^\infty(\Ga[X(\cdot,t_n)])} + c \|((V_h^*)^n)^\ell - ((\widetilde{V}_h^*)^n)^\ell\|_{L^\infty(\Ga[X(\cdot,t_n)])} \\
	\leq &\ H_1 + c h^{2-d/2} + c \tau^q ,  
	\end{align*}
	and similarly 
	\begin{align*}
	|(\widetilde{H}_h^*)^n| \geq &\ |H(\cdot,t_n)| - |H(\cdot,t_n) - ((\widetilde{H}_h^*)^n)^\ell| \\
	\geq &\ |H(\cdot,t_n)| - \|H(\cdot,t_n) - ((\widetilde{H}_h^*)^n)^\ell\|_{L^\infty(\Ga[X(\cdot,t_n)])} \\
	\geq &\ H_1 - c \|V(\cdot,t_n) - ((V_h^*)^n)^\ell\|_{L^\infty(\Ga[X(\cdot,t_n)])} - c \|((V_h^*)^n)^\ell - ((\widetilde{V}_h^*)^n)^\ell\|_{L^\infty(\Ga[X(\cdot,t_n)])} \\
	\geq &\ H_1 - c h^{2-d/2} - c \tau^q ,
	\end{align*} 
	where in both estimates we have used \eqref{eq:Linfty error of Ritz map of Vh*} and \eqref{eq:Linfty error of Ritz map of extrapolated Vh*}. 
	The argument is now repeated for $\widetilde{H}_h^n$, now comparing $\widetilde{H}_h^n$ with $(\widetilde{H}_h^*)^n$, and using \eqref{eq:H Linfty difference - fully discrete} instead of \eqref{eq:Linfty error of Ritz map of Vh*}, and \eqref{eq:Linfty error of Ritz map of extrapolated Vh*}:
	\begin{equation*}
	\begin{aligned}
	|\widetilde{H}_h^n| \leq &\ |(\widetilde{H}_h^*)^n| + |\widetilde{H}_h^n - (\widetilde{H}_h^*)^n| \\
	\leq &\ |(\widetilde{H}_h^*)^n| + \|\widetilde{H}_h^n - (\widetilde{H}_h^*)^n\|_{L^\infty(\Ga_h[\xls^n])} \\
	\leq &\ |(\widetilde{H}_h^*)^n| + c \| e_{\widetilde{H}}^n\|_{\rL^\infty(\Ga_h[\xls^n])} \\
	\leq &\ |(\widetilde{H}_h^*)^n| + c h^{(\kappa-1)/2} , 
	\end{aligned}
	\qquad 
	\begin{aligned}
	|\widetilde{H}_h^n| \geq &\ |(\widetilde{H}_h^*)^n| - |\widetilde{H}_h^n - (\widetilde{H}_h^*)^n| \\
	\geq &\ |(\widetilde{H}_h^*)^n| - \|\widetilde{H}_h^n - (\widetilde{H}_h^*)^n\|_{L^\infty(\Ga_h[\xls^n])} \\
	\geq &\ |(\widetilde{H}_h^*)^n| - c \| e_{\widetilde{H}}^n\|_{\rL^\infty(\Ga_h[\xls^n])} \\
	\geq &\ |(\widetilde{H}_h^*)^n| - c h^{(\kappa-1)/2} . 
	\end{aligned} 
	\end{equation*}
	Altogether, recalling that $d = 2$ and $\kappa \geq 2$, we obtain the bounds, for $h \leq h_0$ and $\tau \leq \tau_0$,
	\begin{align}
	\label{eq:uniform bounds for Hh* - fully discrete}
	0 \, < \, \tfrac23 H_0 \, \leq \, \|(\widetilde{H}_h^*)^n\|_{L^\infty(\Ga_h[\xls^n])} \, \leq \, \tfrac32 H_1, \qquad \text{$h$- and $\tau$-uniformly for $n\tau \leq T$,}
	%\end{align}
	\intertext{and}
	%\begin{align}
	\label{eq:uniform bounds for Hh - fully discrete}
	0 \, < \, \tfrac12 H_0 \, \leq \, \|\widetilde{H}_h^n\|_{L^\infty(\Ga_h[\xls^n])} \, \leq \, 2 H_1, \qquad \text{$h$- and $\tau$-uniformly for $n \tau \leq t^*$.}
	\end{align}
	
	By \eqref{eq:uniform bounds for Hh* - fully discrete} and \eqref{eq:uniform bounds for Hh - fully discrete}, and using that the function $V':\R \to \R$ is positive everywhere, the functions $(\widetilde{V}_h')^n = V'(\widetilde{H}_h(\cdot,t_n))$ and $((\widetilde{V}_h')^*)^n = V'(\widetilde{H}_h^*(\cdot,t_n))$ satisfy that
	\begin{equation}
	\label{bounds V-s - fully discrete}
	\text{$(\widetilde{V}_h')^\ast(\cdot,t_n)$ and $\widetilde{V}_h'(\cdot,t_n)$ have $h$- and $\tau$-uniform positive upper and lower bounds for $n \tau \leq t^*$.} 
	\end{equation}
	\bbk These bounds are required by Lemma~\ref{lemma:nonlinear mass matrix diff}. \ebk

	\emph{Norm equivalences:} 
	Throughout the stability proof we will additionally need some norm equivalence results.
	The fact that 
	\begin{equation}
	\label{norm-equiv-BDF}
	\text{the \bbk (semi)-norms \ebk  $\|\cdot\|_{\bfM(\wtx^n)}$ and $\|\cdot\|_{\bfA(\wtx^n)}$ are $h$- and $\tau$-uniformly equivalent for $q \tau \leq n \tau \leq t^*$,}
	\end{equation}
	is proven by the same techniques as (10.12) in \cite{MCF}.
	
	As we have already pointed out, the function $\widetilde{H}_h^n$ is given by $\widetilde{H}_h^n = V^{-1}(\widetilde{V}_h^n)$ (and not an extrapolation). This then defines $(\widetilde{V}_h')^n =V'(\widetilde{H}_h^n)$, which appears in the solution-dependent norm generated by $\Mxu$:
	\begin{align*}
	\| \bfw \|_{\Mxu}^2 := \bfw^T \Mxu \bfw = &\ \int_{\Ga_h[\wtx^n]} \frac{1}{(\widetilde{V}_h')^n } \, |w_h|^2  .
	\end{align*}
	In view of \eqref{bounds V-s - fully discrete}, i.e.~$(\widetilde{V}'_h)^n$ is uniformly bounded away from zero and bounded from above, the matrix $\Mxu$ is indeed generates a norm.
	
	Furthermore, the bounds \eqref{bounds V-s - fully discrete} additionally yield that 
	\begin{equation}
	\label{norm and norm-sol equivalence}
	\text{the norm $\|\cdot\|_{\Mxu[n]}$ is equivalent to $\|\cdot\|_{\bfM(\wtx^n)}$ uniformly in $h$ and $\tau$ for any $q \tau \leq n \tau \leq t^*$}.
	\end{equation}
	
	As a final preparatory result, we prove the norm equivalence of the solution-dependent norm at different times, i.e.~the solution-dependent analogue of the norm equivalence \eqref{norm-equiv-BDF}. To this end, we start by rewriting
	\begin{align*}
	\| \bfw \|_{\Mxu}^2 - \| \bfw \|_{\Mxu[n-1]}^2 
	= &\ \bfw^T \big( \Mxu - \Mxu[n-1] \big) \bfw \\
	= &\ \bfw^T \big( \Mxu - \bfM(\wtx^{n-1},\wtu^{n}) \big) \bfw \\
	&\ + \bfw^T \big( \bfM(\wtx^{n-1},\wtu^{n}) - \Mxu[n-1] \big) \bfw .
	\end{align*}
	The first term on the right-hand side is estimated using Lemma~\ref{lemma:nonlinear mass matrix diff} (i), using \eqref{eq:assumed bounds - n} to ensure the $W^{1,\infty}$ norm boundedness of $\widetilde{u}_h^n$, together with an estimate for $\wtx^n - \wtx^{n-1}$. In (10.11) of \cite{MCF} it was shown that $\|\widetilde{W}_h^n\|_{W^{1,\infty}(\Ga_h[\wtx^n])} \leq K$, where the coefficient functions of $\widetilde{W}_h^n$ are given by $\widetilde{\bfW}^n = ( \wtx^n - \wtx^{n-1} ) / \tau$. Hence, we altogether obtain
	\begin{align*}
	\bfw^T \big( \Mxu - \bfM(\wtx^{n-1},\wtu^{n}) \big) \bfw \leq &\ c \tau \|\widetilde{W}_h^n\|_{W^{1,\infty}(\Ga_h[\wtx^n])} \|\bfw\|_{\bfM(\wtx^{n})}^2 \\
	\leq &\ c \tau \| \bfw \|_{\Mxu}^2 ,
	\end{align*}
	where for the second inequality we have used \eqref{norm and norm-sol equivalence}, i.e.~the equivalence between the norms $\|\cdot\|_{\Mxu}$ and $\|\cdot\|_{\bfM(\wtx^{n})}$.
	
	The second term on the right-hand side is estimated using Lemma~\ref{lemma:nonlinear mass matrix diff} (iii). Using the inequality \eqref{eq:comparing Ritz maps in time}, we estimate as
	\begin{align*}
	&\ \bfw^T \big( \bfM(\wtx^{n-1},\wtu^{n}) - \Mxu[n-1] \big) \bfw \\
	= &\ \bfw^T \big( \bfM(\wtx^{n-1},\wtu^{n}) - \bfM(\wtx^{n-1},\wtuls^{n}) \big) \bfw 
	%	\\ &\ 
	+ \bfw^T \big( \bfM(\wtx^{n-1},\wtuls^{n}) - \bfM(\wtx^{n-1},\wtuls^{n-1}) \big) \bfw \\
	&\ + \bfw^T \big( \bfM(\wtx^{n-1},\wtuls^{n}) - \bfM(\wtx^{n-1},\wtu^{n-1}) \big) \bfw \\
	\leq &\ c \Big( \|\widetilde{e}_u^n\|_{L^{\infty}(\Ga_h[\wtx^{n-1}])}
	+ \|\widetilde{u}_*^n - \widetilde{u}_*^{n-1}\|_{L^{\infty}(\Ga_h[\wtx^{n-1}])} 
	+ \|\widetilde{e}_u^{n-1}\|_{L^{\infty}(\Ga_h[\wtx^{n-1}])} \Big) \|\bfw\|_{\bfM(\wtx^{n-1})}^2 \\ % \|w_h\|_{L^2(\Ga_h[\wtx^{n-1}])}^2 \\
	%	\leq &\ c \Big( h^{(\kappa-1)/2} + \tau + h^{(\kappa-1)/2} \Big) \|w_h\|_{L^2(\Ga_h[\wtx^{n-1}])}^2 \\
	\leq &\ c \Big( h^{(\kappa-1)/2} + c \tau \Big) \|\bfw\|_{\Mxu[n-1]}^2 ,
	\end{align*}
	where in the last inequality we have again used the norm equivalence \eqref{norm and norm-sol equivalence}.
	
	The combination of the two above estimates, together with the mild restriction $\tau \leq C_0 h$, yields
	\begin{align*}
	\| \bfw \|_{\Mxu}^2 \leq &\ \| \bfw \|_{\Mxu[n-1]}^2  + c C_0 h \| \bfw \|_{\Mxu}^2 
	+ c \Big( h^{(\kappa-1)/2} + c C_0 h \Big) \|\bfw\|_{\Mxu[n-1]}^2 .
	\end{align*}
	Then absorbing the second term to the right-hand side yields
	\begin{align*}
	\| \bfw \|_{\Mxu}^2 \leq &\  \frac{ 1 + c  h^{(\kappa-1)/2} + c h }{1 - c h} \|\bfw\|_{\Mxu[n-1]}^2 
	\leq  \big(1 + C h^r \big) \|\bfw\|_{\Mxu[n-1]}^2 ,
	\end{align*}
	with $1/2 \leq r := \min\{1,(\kappa-1)/2\}$ and for $h \leq h_0$. For the last inequality here we have used that $(1-ch)^{-1} \leq 1 + C h$ for some constant $C > 0$.
	
	By reversing the roles of the arguments, we obtain
	\begin{equation}
	\label{eq:Mxu - n-1 to n}
	\big(1 - C h^r \big) \| \bfw \|_{\Mxu[n-1]}^2 \leq \|\bfw\|_{\Mxu}^2 .
	\end{equation}
	
	Therefore, for sufficiently small $h \leq h_0$ and $\tau \leq \tau_0$ (subject to $\tau \leq C_0 h$), we have that
	\begin{equation}
	\label{norm-sol-equiv-BDF}
	\text{the norms $\|\cdot\|_{\Mxu[n]}$ are $h$- and $\tau$-uniformly equivalent for $q \tau \leq n \tau \leq t^*$.}
	\end{equation}

	\medskip
	(A) \emph{Estimates for the surface PDE:} \bbk We test the error equation for $\eu$ with the time derivative $\doteu$. In order to obtain a Nevanlinna--Odeh multiplier term, \ebk we form the difference of equation \eqref{eq:error eq - u - full} for $n$ with $\eta$ times this equation for $n-1$, for $\eta\in[0,1)$ of 
	Lemma~\ref{lemma: NevanlinnaOdeh multiplier}, and then we test this difference
	with the discrete time derivative $\doteu^n$ defined by \eqref{dotexun}. This yields, for $(q+1) \tau \leq n \tau \leq t^*$,
	\begin{equation} 
	\label{eq:error eq tested - u - full}
	\begin{aligned}
	& (\doteu^n)^T\bfM(\wtx^n,\wtu^n)\doteu^n - \eta (\doteu^n)^T\bfM(\wtx^{n-1},\wtu^{n-1})\doteu^{n-1} + 
	(\doteu^n)^T\bfA(\wtx^n)(\eu^n - \eta \eu^{n-1}) 
	\\
	& = 
	-  \eta (\doteu^n)^T\big( \bfA(\wtx^n) -  \bfA(\wtx^{n-1}) \big)  \eu^{n-1} 
	+  (\doteu^n)^T (\ru^n-\eta\ru^{n-1}).
	\end{aligned}
	\end{equation}
	
	(i) On the left-hand side of \eqref{eq:error eq tested - u - full}, the first term is 
	\begin{equation*}
	(\doteu^n)^T\bfM(\wtx^n,\wtu^n)\doteu^n =  \|\doteu^n\|_{\Mxu}^2 .
	\end{equation*}
	The second term is bounded by
	\begin{align*}
	(\doteu^n)^T \Mxu[n-1] \doteu^{n-1}
	\leq &\ \|\doteu^n\|_{\Mxu[n-1]} \|\doteu^{n-1}\|_{\Mxu[n-1]} \\
	\leq &\ \tfrac{1}{2}   \|\doteu^n\|_{\Mxu[n-1]}^2 +  \tfrac{1}{2}   \|\doteu^{n-1}\|_{\Mxu[n-1]}^2 \\
	\leq &\ \tfrac{1}{2}  (1+ch^r) \|\doteu^n\|_{\Mxu}^2 +  \tfrac{1}{2}  \|\doteu^{n-1}\|_{\Mxu[n-1]}^2,
	\end{align*} 
	where for the last inequality we used the bound \eqref{eq:Mxu - n-1 to n} (to raise the superscript from $n-1$ to $n$).
	This yields
	\begin{equation}
	\label{M-lower bound}
	\begin{aligned}
	& (\doteu^n)^T \Mxu \doteu^n - \eta (\doteu^n)^T \Mxu[n-1] \doteu^{n-1} 
	\\
	& \qquad\geq  \big( 1 - \tfrac{1}{2} \eta (1+ ch^r) \big)  \|\doteu^n\|_{\Mxu}^2 - \tfrac{1}{2} \eta \|\doteu^{n-1}\|_{\Mxu[n-1]}^2.
	\end{aligned} 
	\end{equation}
	
	The above terms pose the requirement to work with the solution-dependent norm $\|\cdot\|_{\Mxu}$, and not with $\|\cdot\|_{\bfM(\wtx)}$-norms used for mean curvature flow.

	(ii) The terms involving the stiffness matrix are estimated exactly as in \cite[(A.ii)]{MCF}. We first estimate the third term on the left-hand side of \eqref{eq:error eq tested - u - full}: the combination of Lemma~\ref{lemma: Dahlquist} and \ref{lemma: NevanlinnaOdeh multiplier} yields
	\begin{equation}
	\label{eq:DNO term}
	\begin{aligned}
	& (\doteu^{n})^T \bfA(\wtx^n)(\eu^{n} - \eta \eu^{n-1}) = \Bigl(\frac{1}{\tau} \sum_{i=0}^q \delta_i \eu^{n-i}\Bigr)^T\bfA(\wtx^n)(\eu^{n} - \eta \eu^{n-1})
	\\
	&\geq \frac{1}{\tau} \bigg( \sum_{i,j=1}^q g_{ij} (\eu^{n-q+i})^T \bfA(\wtx^n) \eu^{n-q+j} - 
	\sum_{i,j=1}^q g_{ij} (\eu^{n-q+i-1})^T \bfA(\wtx^n) \eu^{n-q+j-1} \bigg) .
	\end{aligned}
	\end{equation}
	While for the first term on the right-hand side of \eqref{eq:error eq tested - u - full}, cf.~equation (10.18) in \cite{MCF}, we have
	\begin{equation}
	\label{eq:A-diff}
	\begin{aligned}
	- (\doteu^n)^T\bigl( \bfA(\wtx^n) -  \bfA(\wtx^{n-1}) \bigr)  \eu^{n-1} 
	\leq &\ c \rho \|\doteu^n\|_{\bfM(\wtx^n)}^2 +  c \rho^{-1}\|\eu^{n-1}\|_{\bfA(\wtx^{n-1})}^2 \\
	\leq &\ c \rho \|\doteu^n\|_{\Mxu}^2 +  c \rho^{-1}\|\eu^{n-1}\|_{\bfA(\wtx^{n-1})}^2 ,
	\end{aligned}
	\end{equation}
	where for the last estimate we have used the norm equivalence \eqref{norm and norm-sol equivalence} to measure $\doteu^n$ in the solution dependent norm $\|\cdot\|_{\Mxu}$.
	
	We now estimate the remaining terms. 
	Recalling \eqref{eq:error terms - full}, the last term on the right-hand side of \eqref{eq:error eq tested - u - full} altogether reads:
	\begin{equation}
	\label{eq:remainder terms}
	\begin{alignedat}{3}
	&\ (\doteu^n)^T (\ru^n-\eta\ru^{n-1}) \\
	= &\ - (\doteu^n)^T  \big( \bfM(\widetilde \bfx^n,\widetilde \bfu^n)-\bfM(\widetilde \bfx^n,\widetilde \bfu_*^n) \big) \dot\bfu_*^n 
	+ \eta (\doteu^n)^T  \big( \bfM(\widetilde \bfx^{n-1},\widetilde \bfu^{n-1})-\bfM(\widetilde \bfx^{n-1},\widetilde \bfu_*^{n-1}) \big) \dot\bfu_*^{n-1} \\
	&\ - (\doteu^n)^T  \big( \bfM(\widetilde \bfx^n,\widetilde \bfu_*^n)-\bfM(\widetilde \bfx_*^n,\widetilde \bfu_*^n) \big) \dot\bfu_*^n 
	+ \eta (\doteu^n)^T  \big( \bfM(\widetilde \bfx^{n-1},\widetilde \bfu_*^{n-1})-\bfM(\widetilde \bfx_*^{n-1},\widetilde \bfu_*^{n-1}) \big) \dot\bfu_*^{n-1} \\ % && \text{(iii)} \\ %
	&\ - (\doteu^n)^T  \big( \bfA(\widetilde \bfx^n)-\bfA(\widetilde \bfx_*^n) \big) \bfu_*^n 
	+ \eta (\doteu^n)^T  \big( \bfA(\widetilde \bfx^{n-1})-\bfA(\widetilde \bfx_*^{n-1}) \big) \bfu_*^{n-1} \\ % && \text{(iv)} \\
	&\ + (\doteu^n)^T  \big(\bff(\widetilde \bfx^n,\widetilde \bfu^n) - \bff(\widetilde \bfx_*^n,\widetilde \bfu_*^n)\big) 
	- \eta (\doteu^n)^T  \big(\bff(\widetilde \bfx^{n-1},\widetilde \bfu^{n-1}) - \bff(\widetilde \bfx_*^{n-1},\widetilde \bfu_*^{n-1})\big) \\ % && \text{(v)} \\
	&\ - (\doteu^n)^T  \bfM(\wtxls^n)  \du^n 
	+ \eta (\doteu^n)^T  \bfM(\wtxls^{n-1})  \du^{n-1} . % && \text{(vi)} 
	\end{alignedat}
	\end{equation}
	The terms not involving the solution-dependent mass matrix are estimated by the exact same techniques as the corresponding terms in \cite{MCF} (see (iv), (v), and (vi) below). On the other hand, the terms in the first two lines require new estimates compared to \cite[(A.ii)]{MCF}.
	
	(iii) In the two terms in the first line of \eqref{eq:remainder terms} the position vectors are fixed. Hence, using Lemma~\ref{lemma:nonlinear mass matrix diff} (iv) we estimate them (similarly to the time-continuous case) by
	\begin{equation}
	\label{eq:rhs estimate - 1st dot e vs solution}
	\begin{aligned}
	& - (\doteu^n)^T  \big( \bfM(\widetilde \bfx^n,\widetilde \bfu^n)-\bfM(\widetilde \bfx^n,\widetilde \bfu_*^n) \big) \dot\bfu_*^n 
	+ \eta (\doteu^n)^T  \big( \bfM(\widetilde \bfx^{n-1},\widetilde \bfu^{n-1})-\bfM(\widetilde \bfx^{n-1},\widetilde \bfu_*^{n-1}) \big) \dot\bfu_*^{n-1} \\
	&\leq  c \| \doteu^n\|_{\bfM(\wtx^n)} \|\wteu^n\|_{\bfM(\wtx^n)}
	+ c \| \doteu^n\|_{\bfM(\wtx^{n-1})} \|\wteu^{n-1}\|_{\bfM(\wtx^{n-1})} \\
	&\leq  \rho \| \doteu^n\|_{\Mxu}^2  + c  \rho^{-1}   \|\wteu^n\|_{\bfM(\wtx^n)}^2
	+ c  \rho^{-1} \|\wteu^{n-1}\|_{\bfM(\wtx^{n-1})}^2 ,
	\end{aligned}
	\end{equation}
	where we have used the norm equivalence \eqref{norm and norm-sol equivalence}, and then Young's inequality with a small $\rho > 0$, independent of $h$, $\tau$, and $n \leq n^*$, which will be chosen later on.
	
	Analogously, using Lemma~\ref{lemma:nonlinear mass matrix diff} (ii), for the terms in the second line of \eqref{eq:remainder terms} we obtain, with a small $\rho > 0$,
	\begin{equation}
	\label{eq:rhs estimate - 2nd dot e vs solution}
	\begin{aligned}
	& - (\doteu^n)^T  \big( \bfM(\widetilde \bfx^n,\widetilde \bfu_*^n)-\bfM(\widetilde \bfx_*^n,\widetilde \bfu_*^n) \big) \dot\bfu_*^n 
	+ \eta (\doteu^n)^T  \big( \bfM(\widetilde \bfx^{n-1},\widetilde \bfu_*^{n-1})-\bfM(\widetilde \bfx_*^{n-1},\widetilde \bfu_*^{n-1}) \big) \dot\bfu_*^{n-1} \\
	& \leq  c \| \doteu^n\|_{\bfM(\wtx^n)} \|\wtex^n\|_{\bfK(\wtx^n)}
	+ c \| \doteu^n\|_{\bfM(\wtx^{n-1})} \|\wtex^{n-1}\|_{\bfK(\wtx^{n-1})} \\
	& \leq  \rho \| \doteu^n\|_{\Mxu}^2  + c \rho^{-1} \|\wtex^n\|_{\bfK(\wtx^n)}^2 + c \rho^{-1} \|\wtex^{n-1}\|_{\bfK(\wtx^{n-1})}^2 ,
	\end{aligned}
	\end{equation}
	where we have again used the norm equivalence \eqref{norm and norm-sol equivalence}.
	
	(iv) For the stiffness matrix terms in \eqref{eq:remainder terms}, the rather complicated estimates of \cite[(A.iv)]{MCF} can be used verbatim, and they yield the bound, see (10.24)--(10.27) in \cite{MCF}:
	\begin{equation}
	\label{eq:A terms}
	\begin{aligned}
	& - (\doteu^n)^T  \big( \bfA(\widetilde \bfx^n)-\bfA(\widetilde \bfx_*^n) \big) \bfu_*^n 
	+ \eta (\doteu^n)^T  \big( \bfA(\widetilde \bfx^{n-1})-\bfA(\widetilde \bfx_*^{n-1}) \big) \bfu_*^{n-1} \\
	%		\leq &\ - \sum_{j=0}^{q-1} \sigma_j \be \Big( (\eu^{n-j})^T \big( \bfA(\wtx^n)-\bfA(\wtxls^n) \big) \bfu_*^n \Big) \\
	%		&\ + \eta \sum_{j=0}^{q-1} \sigma_j \be \Big( (\eu^{n-j})^T \big( \bfA(\wtx^{n-1})-\bfA(\wtxls^{n-1}) \big) \bfu_*^{n-1} \Big) \\
	%		&\ + c  \sum_{j=n-q}^{n} \|\eu^{j}\|_{\bfA(\wtx^{j})}^2  + c \Big( \sum_{j=n-q}^{n-1} \| \be \ex^j\|_{\bfK(\wtx^j)}^2 + \sum_{j=n-q-1}^{n-1} \|\ex^j\|_{\bfK(\wtx^j)}^2 \Big)  \\
	%		&\ + \eta c  \sum_{j=n-q}^{n} \|\eu^{j}\|_{\bfA(\wtx^{j})}^2 + \eta c \Big( \sum_{j=n-q-1}^{n-2} \| \be \ex^j\|_{\bfK(\wtx^j)}^2 + \sum_{j=n-q-2}^{n-2} \|\ex^j\|_{\bfK(\wtx^j)}^2 \Big) \\
	& \leq \bbk - \sum_{j=0}^{q-1} \sigma_j \be \Big( (\eu^{n-j})^T \big( \bfA(\wtx^n)-\bfA(\wtxls^n) \big) \bfu_*^n \Big)
	+ \eta \sum_{j=0}^{q-1} \sigma_j \be \Big( (\eu^{n-j})^T \big( \bfA(\wtx^{n-1})-\bfA(\wtxls^{n-1}) \big) \bfu_*^{n-1} \Big) \ebk \\
	& \quad c \sum_{j=0}^{q} \|\eu^{n-j}\|_{\bfA(\wtx^{n-j})}^2  + c \sum_{j=0}^{q} \|\ex^{n-1-j}\|_{\bfK(\wtx^{n-1-j})}^2 
	+ c \sum_{j=0}^{q-1} \| \be \ex^{n-1-j}\|_{\bfK(\wtx^{n-1-j})}^2
	\end{aligned}
	\end{equation}

	(v) The non-linear terms in \eqref{eq:remainder terms} are estimated as \cite[(A.v)]{MCF}, with a small $\rho > 0$, by
	\begin{equation}
	\label{eq:rhs estimate - nonlinearities}
	\begin{aligned}
	& (\doteu^n)^T \big(\bff(\wtx^n,\widetilde \bfu^n) - \bff(\wtxls^n,\widetilde \bfu_*^n)\big)  
	- \eta (\doteu^n)^T \big(\bff(\wtx^{n-1},\widetilde \bfu^{n-1}) - \bff(\wtxls^{n-1},\widetilde \bfu_*^{n-1})\big) \\
	& \leq  \rho \|\doteu^n\|_{\bfM(\wtx^n)}^2 + c \rho^{-1} \|\wteu^n\|_{\bfK(\wtx^n)}^2 + c\rho^{-1}  \|\wteu^{n-1}\|_{\bfK(\wtx^{n-1})}^2 
	%		\\ &\quad 
	+ c \rho^{-1} \|\wtex^n\|_{\bfK(\wtx^n)}^2 + c\rho^{-1} \|\wtex^{n-1}\|_{\bfK(\wtx^{n-1})}^2  \\
	& \leq c \rho \|\doteu^n\|_{\Mxu}^2 + c \rho^{-1} \|\wteu^n\|_{\bfK(\wtx^n)}^2 + c \|\wteu^{n-1}\|_{\bfK(\wtx^{n-1})}^2 
	%		\\ &\quad 
	+ c \rho^{-1} \|\wtex^n\|_{\bfK(\wtx^n)}^2 + c\rho^{-1} \|\wtex^{n-1}\|_{\bfK(\wtx^{n-1})}^2 .
	\end{aligned}
	\end{equation}
	where for the final estimate we have used the norm equivalence \eqref{norm and norm-sol equivalence}.
	
	(vi) Finally, the defect terms are bounded, exactly as \cite[(A.v)]{MCF} but additionally using the norm equivalence \eqref{norm and norm-sol equivalence}, with a small $\rho > 0$, by
	\begin{equation}
	\label{eq:rhs estimate - defects}
	\begin{aligned}
	& - (\doteu^n)^T \bfM(\wtxls^n) \du^n 
	+ \eta (\doteu^n)^T \bfM(\wtxls^{n-1}) \du^{n-1} \\
	& \leq  \rho \|\doteu^n\|_{\Mxu}^2 + c \rho^{-1} \|\du^n\|_{\bfM(\wtxls^n) }^2 + c\rho^{-1}  \|\du^{n-1}\|_{\bfM(\wtxls^{n-1} )}^2 .
	\end{aligned}
	\end{equation}

	We substitute the estimates from (i)--(vi) into \eqref{eq:error eq tested - u - full}, which altogether yields the inequality, for $q+1 \leq n \leq n^*$,
	\begin{equation}
	\label{eq:stability estimate - pre sum}
	\begin{aligned}
	& \Big( 1 -  \frac{1}{2} \eta (1+ ch^r) - c\rho \Big) \|\doteu^n\|_{\Mxu}^2 - \frac{1}{2} \eta  \|\doteu^{n-1}\|_{\Mxu[{n-1}]}^2 \\
	&\quad + \frac{1}{\tau} \bigg( \sum_{i,j=1}^q g_{ij} (\eu^{n-q+i})^T \bfA(\wtx^n) \eu^{n-q+j} 
	- \sum_{i,j=1}^q g_{ij} (\eu^{n-q+i-1})^T \bfA(\wtx^n) \eu^{n-q+j-1} \bigg) \\
	&\leq 
	- \sum_{j=0}^{q-1} \sigma_j \be \Big( (\eu^{n-j})^T \big( \bfA(\wtx^n)-\bfA(\wtxls^n) \big) \bfu_*^n \Big)
	\\
	&\quad	+ \eta \sum_{j=0}^{q-1} \sigma_j \be \Big( (\eu^{n-j})^T \big( \bfA(\wtx^{n-1})-\bfA(\wtxls^{n-1}) \big) \bfu_*^{n-1} \Big)
	+ c \eps_n
	\end{aligned}
	\end{equation}
	with 
	\begin{equation}
	\label{eq:eps-n}
	\begin{aligned}
	\eps_n &=  \sum_{j=0}^{q} \|\eu^{n-j}\|_{\bfA(\wtx^{n-j})}^2  + \sum_{j=0}^{q} \|\ex^{n-1-j}\|_{\bfK(\wtx^{n-1-j})}^2 
	+   \sum_{j=0}^{q-1} \| \be  \ex^{n-1-j}\|_{\bfK(\wtx^{n-1-j})}^2
	\\ &\quad 
	+   \|\du^n\|_{\bfM(\wtxls^n)}^2 + \|\du^{n-1}\|_{\bfM(\wtxls^{n-1})}^2 .
	\end{aligned}
	\end{equation}
	
	This is exactly the same formula as (10.30)--(10.31) in \cite{MCF}, except the solution-dependent norms (instead of $\|\cdot\|_{\bfM(\wtx)}$) on the $\doteu$ terms. Therefore, the proof can be finished by the exact same arguments, but using the norm equivalence \eqref{norm-sol-equiv-BDF}. 
%	Altogether, using \eqref{norm and norm-sol equivalence} in the final steps, we finally obtain
%	\begin{equation}
%	\label{eq:ex-ev}
%		\|\ex^{n}\|_{\bfK(\wtx^{n})}^2 \le  c\tau \sum_{j=q}^{n}  \|\ev^j\|_{\bfK(\wtx^{j})}^2 + c \sum_{i=0}^{q-1} \|\ex^i\|_{\bfK(\bfx^{i})}^2 
%		+ c\tau \sum_{j=q}^{n} \|\dx^j\|_{\bfK(\wtxls^{j})}^2 .
%	\end{equation}
	
%	Similarly to the semi-discrete case, Parts (B) and (C) of \cite{MCF}, i.e.~the \emph{estimates for the velocity equation} and the \emph{combination} of the estimates, is repeated verbatim from \cite[Proposition~10.1]{MCF}.

	\bbk 
	(B) \emph{Estimates for the velocity equation:} Since the velocity equation is formally the same here and in \cite{MCF}: \eqref{eq:error eq - v - full} and \cite[equation~(10.3a)]{MCF} coincide, the analysis in \cite[Part~(B)]{MCF} and the obtained result applies to the present situation as well, and yields the estimate:
	\begin{equation}
	\label{eq:energy estimate - v - full}
		\|\ev^n\|_{\bfK(\wtxls^n)}^2 \leq c \sum_{i=0}^{q-1} \Big( \|\ex^{n-1-i}\|_{\bfK(\wtxls^{n-1-i})}^2 + c \|\eu^{n-1-i}\|_{\bfK(\wtxls^{n-1-i})}^2 \Big) + c \|\dv\t\|_{\star,\xls^n}^2  .
	\end{equation} 
	
	(C) \emph{Combination:} Since the final estimates from Part (A) and (B) are formally the same as the two corresponding estimates in \cite[Part~(A) and (B)]{MCF}, the proof can be finished exactly as it was done in \cite[Part~(C)]{MCF} (using Gronwall's inequality), by which we obtain the stability estimate \eqref{eq:stability bound - full} for $n \tau \leq t^*$.
	
	\ebk 
	
	It remains to show that $t^\ast = T$ for $h$ and $\tau$ sufficiently small. 
	Upon noting that by the assumed defect bounds \eqref{eq:assume small defects} and \eqref{eq:assume small initial values}--\eqref{eq:assume small initial values 2}, the obtained stability bound \eqref{eq:stability bound - full} implies
	\begin{equation*}
		\| \ex^n \|_{\bfK(\xls^n)}
		+ \| \ev^n \|_{\bfK(\xls^n)}
		+ \| \eu^n \|_{\bfK(\xls^n)}
		\leq C h^\kappa, 
	\end{equation*}
	and therefore by an inverse inequality we obtain, for $n \tau \leq t^*$,
	\begin{equation}
	\label{eq:showing assumed bound beyond t*}
		\begin{aligned}
			&\ \| e_x(\cdot,t_n) \|_{W^{1,\infty}(\Ga_h(\xls^n))} 
			+ \| e_v(\cdot,t_n) \|_{W^{1,\infty}(\Ga_h(\xls^n))}
			+ \| e_u(\cdot,t_n) \|_{W^{1,\infty}(\Ga_h(\xls^n))} \\
			&\ \ \leq \frac{c}{h} \Big( \| \ex^n \|_{\bfK(\xls^n)}
			+\| \ev^n \|_{\bfK(\xls^n)}
			+\| \eu^n \|_{\bfK(\xls^n)} \Big) \\
			&\ \ \leq c C h^{\kappa-1} \leq \frac{1}{2} h^{(\kappa-1)/2} ,
		\end{aligned}
	\end{equation}
	for sufficiently small $h$. This means that the bounds \eqref{eq:assumed bounds - n} can be extended beyond $t^*$, contradicting the maximality of $t^\ast$, unless $t^\ast = T$ already. Therefore we have shown the stability bound \eqref{eq:stability bound - full} for all $n \tau \leq T$.
\end{proof}

\section{Consistency estimates for the full discretisation}
\label{section:consistency - full}
The following estimates for the defects \eqref{eq:BDF defect definitions} are proved by approximation result for the interpolation and the Ritz map and by geometric approximation errors \cite{Kovacs2018}.

\begin{lemma}\label{lemma:consistency - fully-discrete}
	Assume that the surface $\Ga[X]$ evolving under generalized mean curvature flow is sufficiently regular on the time interval $[0,T]$. Then, there exists constants $h_0 > 0$, $\tau_0 > 0$, and $c = c(T) > 0$ such that for all $h \leq h_0$ and $\tau \leq \tau_0$, satisfying $0 \leq n \tau \leq T$, the defects $d_x^n \in S_h(\Ga_h[\xls^n])^3$, $d_v^n \in S_h(\Ga_h[\xls^n])^3$ and $d_u^n \in S_h(\Ga_h[\xls^n])^4$ of the $k$th-degree finite elements and the $q$-step backward difference formula are bounded as
	\begin{alignat*}{3}
	\|\dx^n\|_{\bfK(\xls^n)} = &\ \|d_x^n\|_{H^1(\Ga_h[\xs(t_n)])} & \leq &\ c\tau^q, \\
	\|\dv^n\|_{\star,\xls^n} = &\ \|d_v^n\|_{H_h\inv(\Ga_h[\xs(t_n)])} & \leq &\ c (h^k+\tau^q) , \\
	\|\du^n\|_{\bfM(\xls^n)} = &\ \|d_u^n\|_{L^2(\Ga_h[\xs(t_n)])} & \leq &\ c (h^k +\tau^q) .
	\end{alignat*}
	The constant $c$ is independent of $h$, $\tau$ and $n$ with $n\tau\le T$.
\end{lemma}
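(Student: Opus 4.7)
The proof will follow the strategy of \cite[Lemma~8.1]{MCF}, adapted to the presence of the solution-dependent mass matrix. I would decompose each defect into a \emph{spatial} contribution (from inserting Ritz/interpolation projections of the exact solution into a continuous-in-time semi-discrete scheme) and a \emph{temporal} contribution (from replacing material derivatives by the BDF quotient and from the linear extrapolation $\widetilde{\bfx}^n_\ast$, $\widetilde{\bfu}^n_\ast$).

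For $\dx^n$ the equation $\dot\bfx = \bfv$ is evaluated at the nodes of the exact surface, so $\dx^n = \dot\xls^n - \vls^n$ is a pure BDF truncation error for the smooth function $t\mapsto X(p_j,t)$. The Peano kernel representation (or Taylor expansion at $t_n$) combined with the assumed regularity $\partial_t^{q+1}X \in H^1(\Gamma^0)$ and the standard BDF coefficients yields the bound $\|\dx^n\|_{\bfK(\xls^n)}\le c\tau^q$, exactly as in \cite[Lemma~8.1]{MCF}.

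For $\du^n$ I would split $\du^n = \du^{n,\mathrm{spat}} + \du^{n,\mathrm{temp}}$. The spatial part arises by inserting $u_h^\ast(\cdot,t_n) = \widetilde R_h u(\cdot,t_n)$ into the semi-discrete weak form \eqref{semidiscretization}; its $L^2$-norm is controlled by the Ritz-map errors and the geometric errors between $\Gamma[X]$ and $\Gamma_h[\xls]$, giving $O(h^k)$ via \cite[Theorem~6.2 and 6.4]{Kovacs2018} and \cite{Demlow2009}, completely analogously to \cite[Lemma~8.1]{MCF} with $V$ in place of $H$. The temporal part combines (i) the BDF quotient error $\dot\uls^n - \mat u_h^\ast(\cdot,t_n)$, which is $O(\tau^q)$ by the classical BDF truncation estimate using ${\mat}^{q+1} u \in H^2$, and (ii) the extrapolation errors $\widetilde\bfx_\ast^n - \bfx_\ast^n$, $\widetilde\bfu_\ast^n - \bfu_\ast^n$ which are also $O(\tau^q)$ by the Peano kernel representation (cf.~\eqref{eq:Linfty error of Ritz map of extrapolated Vh*}); these enter $\bfM(\widetilde\bfx_\ast^n,\widetilde\bfu_\ast^n)$, $\bfA(\widetilde\bfx_\ast^n)$, and $\bff(\widetilde\bfx_\ast^n,\widetilde\bfu_\ast^n)$ and are estimated by the Lipschitz-type bounds of Lemma~\ref{lemma:nonlinear mass matrix diff} together with \eqref{matrix difference bounds}, the smoothness of $\uls^n$ and the upper/lower bounds on $V'(H)$ along the flow.

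For $\dv^n$ the velocity law is a stationary (in $t$) elliptic equation, so there is no BDF truncation contribution; only the spatial Ritz-type defect in the $H_h^{-1}$-norm appears, yielding $O(h^k)$ as in \cite[Lemma~8.1]{MCF}, plus $O(\tau^q)$ contributions from the extrapolated arguments $\widetilde\bfx_\ast^n - \xls^n$ and $\widetilde\bfu_\ast^n - \uls^n$. The main conceptual difficulty that is genuinely new compared to \cite{MCF} is the factor $1/V'(H_h^\ast)$ in $\bfM(\widetilde\bfx_\ast^n,\widetilde\bfu_\ast^n)$; however, since $V$ satisfies \eqref{eq:V assumptions}, $V'$ and $V^{-1}$ are smooth and Lipschitz on the image of $[H_0,H_1]$, and the $L^\infty$ error bound \eqref{eq:Linfty error of Ritz map of Vh*} together with \eqref{eq:Linfty error of Ritz map of extrapolated Vh*} shows that $1/V'(\widetilde H_h^\ast)$ approximates $1/V'(H)$ to order $O(h^{k}+\tau^q)$ in $L^\infty$. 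Hence the replacement of $\bfM(\bfx)$ by $\bfM(\bfx,\bfu)$ in the scheme contributes only higher-order terms to the defect, and the collection of all estimates yields the stated bounds.
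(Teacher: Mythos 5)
Your overall strategy—split each defect into a spatial part (Ritz/interpolation errors and geometric errors) and a temporal part (BDF quotient and extrapolation errors), then bound each using \cite[Lemma~8.1 and Lemma~11.1]{MCF} together with the Ritz map estimates—matches the paper. The treatment of $\dx^n$ and $\dv^n$, and the identification of the solution-dependent coefficient $1/V'(H_h^\ast)$ as the genuinely new difficulty, are all correct.

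However, the step where you handle the new coefficient has a gap. You claim that the $L^\infty$ error bound \eqref{eq:Linfty error of Ritz map of Vh*}, together with \eqref{eq:Linfty error of Ritz map of extrapolated Vh*}, shows $\|1/V'(\widetilde H_h^\ast) - 1/V'(H)\|_{L^\infty} = O(h^k + \tau^q)$. That is not what \eqref{eq:Linfty error of Ritz map of Vh*} gives: with $d=2$ it only yields $\|(V_h^\ast)^\ell - V\|_{L^\infty} \leq c\,h^{2-d/2} + c\,h^2 = O(h)$, so the induced $L^\infty$ error in the coefficient is at best $O(h + \tau^q)$. If you then bound the corresponding defect contribution by $\|\text{coefficient error}\|_{L^\infty}\,\|\partial^\bullet u\|_{L^2}$, you only obtain $O(h + \tau^q)$, which is not the required $O(h^k + \tau^q)$ for $k\geq 2$. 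The paper avoids this by putting the coefficient error in $L^2$ (or $H^1$) where the Ritz map estimate \eqref{eq:8.2} gives the full rate $O(h^k)$, and pairing it with an $L^\infty$ \emph{boundedness} estimate on $(\partial^\bullet_h u_h^\ast)^\ell$ via Cauchy--Schwarz; the $L^\infty$ results \eqref{eq:Linfty error of Ritz map of Vh*}--\eqref{eq:Linfty error of Ritz map of extrapolated Vh*} are only used to establish the $h$-uniform positive upper and lower bounds on $V_h'$ in \eqref{bounds V-s - fully discrete}, not to control the accuracy of the coefficient itself. Swapping which factor carries the $L^2$-accuracy and which carries the $L^\infty$-boundedness is the missing ingredient.
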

\bbk 
\begin{proof}	
	(a) Since the first and third equation of \eqref{eq:BDF defect definitions} is (formally) the same as the corresponding equations in (7.14) in \cite{MCF} the proof of the defect bound of $\dv^n$ and $\dx^n$ in \cite[Lemma~11.1]{MCF} hold in the present case as well. 
	
	(b) We now decompose the defects $d_u^n$ into spatial and temporal parts, cf.~\cite[Lemma~6.2]{KL2018}:
	\begin{equation*}
		d_u^n = d_{\tau,u}^n + d_{h,u}(t_n) .
	\end{equation*}

	We first prove bounds for the semi-discrete defects $d_{h,u}$. 	
	In general the proof is similar to \cite[Lemma~8.1]{MCF}.
	By \cite[Theorem~6.3]{Kovacs2018}, the error in the Ritz map \eqref{eq:Ritz map def} and in its material derivative there holds:
	\begin{equation}
		\begin{aligned}
			\| (u_h^\ast)^\ell(\cdot,t)-u(\cdot,t) \|_{\rH^1(\Ga[X(\cdot,t)])} \leq c h^k, \\
			\| (\dbullet u_h^\ast)^\ell(\cdot,t)- \dbullet u(\cdot,t) \|_{\rH^1(\Ga[X(\cdot,t)])} \leq c h^k.
		\end{aligned}\label{eq:8.1}
	\end{equation}
	The Ritz map error bound directly implies
	\begin{equation}
		\| (V^\ast_h)^\ell (\cdot,t) - V(\cdot,t) \|_{\rH^1(\Ga[X(\cdot,t)])} \leq c h^k .
		\label{eq:8.2}
	\end{equation}
	Using the function $f(u, \nabla_\Gamma u) = |A|^2 u$ we rewrite as
	%	\begin{equation*}
	%	\begin{aligned}
	%	\intGhxast \langle \nablaGhxast \dot{x}_h^\ast, \nablaGhxast \varphi_h \rangle 
	%	&+ \intGhxast \frac{1}{\alpha} \Falpha{(F^\ast_h)} \dot{x}_h^\ast \varphi_h \\ 
	%	&= - \sgn(\alpha) \intGhxast \langle \nablaGhxast \tilde{g}(u_h^\ast), 
	%	\nablaGhxast \varphi_h \rangle \\
	%	&- \intGhxast \frac{1}{\alpha} g(u_h^\ast) \varphi_h 
	%	+ \intGhxast \frac{1}{|\alpha|} \Falpha{(F_h^\ast)} d_v \varphi_h, \\
	%	\intGhxast \frac{1}{|\alpha|} \Falpha{(F_h^\ast)} \dbullet u^\ast_h \varphi_h 
	%	&= -\intGhxast \langle \nablaGhxast u_h^\ast, \nablaGhxast \varphi_h \rangle \\
	%	&+ \intGhxast f(u_h^\ast,\nablaGhxast u_h^\ast) \varphi_h\\
	%	&+ \intGhxast \frac{1}{|\alpha|} \Falpha{(F_h^\ast)} d_u \varphi_h.
	%	\end{aligned}
	%	\end{equation*} 
	\begin{align*}
	&\int_{\Gamma_h[\xs]} \!((V'_h)^*)^{-1} \partial^\bullet_h u_h^* \cdot \varphi_h
	+
	\int_{\Gamma_h[\xs]} \!\nabla_{\Gamma_h[\xs]} u_h^* \cdot \nabla_{\Gamma_h[\xs]} \varphi_h
	= 
	\int_{\Gamma_h[\xs]} \! f(u_h^*, \nabla_{\Gamma_h[\xs]}u_h^*) \cdot  \varphi_h
	+ \int_{\Gamma_h[\xs]} d_u \cdot \varphi_h 
	\end{align*}
	for all $\varphi_h \in S_h[\xs]^4$.
	Subtracting the weak formulation for the exact solution \eqref{evolutioneqs-weak} from this equation, we obtain 
	\begin{align*}
	\int_{\Gamma_h[\xs]} d_u \cdot  \varphi_h = &\ \biggl[ \int_{\Gamma_h[\xs]} \!((V'_h)^*)^{-1} \partial^\bullet_h u_h^* \cdot \varphi_h -
	\int_{\Gamma[X]} (V')^{-1} \partial^\bullet u \cdot   \varphi_h^\ell \biggr]
	\\
	&\ + \biggl( \int_{\Gamma_h[\xs]} \!\nabla_{\Gamma_h[\xs]} u_h^* \cdot \nabla_{\Gamma_h[\xs]} \varphi_h 
	- \int_{\Gamma[X]} \nabla_{\Gamma[X]} u \cdot  \nabla_{\Gamma[X]} \varphi_h^\ell \biggr)
	\\ 
	&\ - \biggl( \int_{\Gamma_h[\xs]} \! f(u_h^*, \nabla_{\Gamma_h[\xs]}u_h^*) \cdot  \varphi_h - 
	\int_{\Gamma[X]}  f(u,\nabla_{\Gamma[X]}u )\cdot   \varphi_h^\ell \biggr)
	\end{align*}
	for all $\varphi_h \in S_h[\xs]^4$. The second and the third term on the right-hand side can be estimated exactly as in \cite[Lemma~8.1]{MCF}. 
	%Further, since $V$ is a smooth, monotonically increasing and invertible function the term on the left-hand side can be bounded from below by
	%	\begin{equation*}
	%		\int_{\Gamma_h[\xs]} ((V'_h)^*)^{-1} d_u \cdot  \varphi_h 
	%		\geq c \cdot \int_{\Gamma_h[\xs]} d_u \cdot  \varphi_h .
	%	\end{equation*}
	The critical term in the square brackets can be rewritten as
	\begin{align*}
	&\ \int_{\Gamma_h[\xs]} \!((V'_h)^*)^{-1} \partial^\bullet_h u_h^* \cdot \varphi_h -
	\int_{\Gamma[X]} (V')^{-1} \partial^\bullet u \cdot \varphi_h^\ell \\
	= &\ 
	\biggl(
	\int_{\Gamma_h[\xs]} \!((V'_h)^*)^{-1} \partial^\bullet_h u_h^* \cdot \varphi_h -
	\int_{\Gamma[X]} (((V'_h)^*)^{-1})^\ell (\partial^\bullet_h u_h^*)^\ell \cdot \varphi_h^\ell
	\biggr) \\
	&\ + 
	\biggl(
	\int_{\Gamma[X]} (((V'_h)^*)^{-1})^\ell (\partial^\bullet_h u_h^*)^\ell \cdot \varphi_h^\ell
	-
	\int_{\Gamma[X]} (V')^{-1} (\partial^\bullet_h u_h^*)^\ell \cdot \varphi_h^\ell
	\biggr) \\
	&\ + 
	\biggl(
	\int_{\Gamma[X]} (V')^{-1} (\partial^\bullet_h u_h^*)^\ell \cdot \varphi_h^\ell
	-
	\int_{\Gamma[X]} (V')^{-1} \partial^\bullet u \cdot \varphi_h^\ell
	\biggr) 
	\end{align*}
	Using \eqref{eq:properties of V} and \eqref{bounds V-s - fully discrete}, \cite[Lemma~7.4]{KLLP2017} implies that the first term is bounded by $c h^k \| \varphi_h^\ell \|_{L^2(\Gamma[X])}$. 
%	By \eqref{bounds V-s} $(V'_h)^*$ and $V'$ are bounded from below by some constant $c_0 > 0$. Using Cauchy-Schwarz inequality, the Lipschitz continuity of $[c_0,\infty) \to \R \colon x \mapsto x^{-1}$ and \eqref{eq:8.2}, the second term is bounded by $c h^k \| \varphi_h^\ell \|_{L^2(\Gamma[X])}$.
	Using Cauchy-Schwarz inequality, \eqref{bounds V-s - fully discrete} and \eqref{eq:8.2}, the second term is bounded by $c h^k \| \varphi_h^\ell \|_{L^2(\Gamma[X])}$.
	
	%Using Cauchy-Schwarz inequality, \eqref{eq:8.2} and Lemma~\ref{lem.A.1} (with $\gamma = -1$) the second term is bounded by $c h^k \| \varphi_h^\ell \|_{L^2(\Gamma[X])}$. 
	Finally, by Cauchy-Schwarz inequality the last term is bounded by $c h^k \| \varphi_h^\ell \|_{L^2(\Gamma[X])}$ using \eqref{eq:8.1}.
	From here on $\|d_{h,u}(t_n)\|_{L^2\Ga_h} = O(h^k)$ is shown exactly as the proof of \cite[Lemma~8.1]{KLLP2017}. 
	
	The temporal defect $d_{h,u}^n$ is bounded by a straightforward combination of the above techniques and those of \cite[Lemma~11.1]{MCF}, and is therefore omitted.
\end{proof}

\ebk 

\section{Proof of Theorem~\ref{theorem:fully discrete convergence}}
\label{section:fully discrete proof completed}

With the stability estimate of Proposition~\ref{proposition:stability - full} and the defect bounds of Lemma \ref{lemma:consistency - fully-discrete} at hand, the proof is a usual decomposition of the errors, cf.~\cite[Section~12]{MCF}.

The errors are decomposed using finite element interpolations of $X$ and $v$ and the Ritz map \eqref{eq:Ritz map def} for $u$. The decomposed parts are then estimated, as $O(h^k+\tau^q)$ in the $H^1(\Gamma[X])$ norm, either using the stability estimate of Proposition~\ref{proposition:stability - full} together with the defect bounds of Lemma~\ref{lemma:consistency - fully-discrete}, or using the interpolation and Ritz map error bounds of \cite{Kovacs2018}. Altogether proving the stated theorem.

\section{Numerical examples}
\label{section:numerics}

We performed the following numerical experiments for various generalized mean curvature flows: 
\begin{itemize}
	\item[-] Convergence tests for spheres where the exact solutions of generalized mean curvature flows are known, i.e.~for inverse mean curvature flow, and generalized mean curvature and generalized inverse mean curvature flow. 
	\item[-] We report on numerical solutions for various flows, and also on geometric quantities which are known to be monotone along their respective flows, e.g.~Hawking mass for inverse mean curvature flow.
	\item[-] We have performed some numerical experiments for some non-convex initial surfaces.
\end{itemize}

All our numerical experiments use quadratic evolving surface finite elements, and linearly implicit backward difference time discretisation of various orders. The numerical computations were carried out in Matlab. The initial meshes for all surfaces were generated using DistMesh by \cite{distmesh}, without exploiting any symmetry of the surfaces.

\subsection{Convergence tests}

Using the algorithm \eqref{eq:BDF}, i.e.~using quadratic evolving surface finite elements for spatial discretisation in combination with a $q$-step linearly implicit BDF method for time integration, we computed approximations to various generalized mean curvature flows in two dimensions over the time interval $[0,T] = [0,1]$. 
The computations are carried out for a sphere (with initial radius $R_0 = 3$), since in these cases the exact solutions are known. Under all these flows spheres remain spherical and only change their radius, expressed by the mean curvature as $R\t = d / H(\cdot,t)$ \bbk with dimension $d = 2$. \ebk 

For all experiments we have started the time integration from the nodal interpolations of the exact initial values $\nu(\cdot,0)$ and $H(\cdot,0)$. 

\bbk We will derive exact solutions of various flows starting from a $d$-dimensional sphere of radius $R_0$, since the derived formulas give insight into the higher dimensional case as well. In our numerical experiments we always consider $d = 2$. \ebk 

\textit{Inverse mean curvature flow.} 
For an $d$-dimensional sphere $\Ga[X(\cdot,t)] = \{R\t x \mid x \in \Ga^0 \}$, with $\Ga^0 = \{ |x| = 1 \mid x \in \R^{d+1} \}$, from the velocity law \eqref{eq:velocity law} and the ODE \eqref{eq:velocity ODE} we obtain that the radius $R\t$ of the sphere satisfies the ODE
\begin{alignat}{3}
\label{eq:ODE - iMCF}
\diff R\t = &\ \frac{R\t}{d}, & \qquad & \text{with initial value} \quad R(0) = R_0 ,
\intertext{whose solution is given by }
\label{eq:ODE - iMCF solution}
R\t = &\ R_0 e^{t / d}, & \qquad & \text{for all} \quad 0 \leq t < \infty .
\end{alignat}

\textit{Generalized inverse mean curvature flow  $\alpha > 1$.} 
For a sphere, from the velocity law \eqref{eq:velocity law} and the ODE \eqref{eq:velocity ODE}, we now obtain that the radius $R\t$ satisfies the ODE, with $\alpha > 1$,
\begin{alignat}{3}
\label{eq:ODE - iMCFgen}
\diff R\t = &\ \Big( \frac{R\t}{d} \Big)^\alpha , & \qquad & \text{with initial value} \quad R(0) = R_0 ,
\end{alignat}
whose solution is given by
\begin{equation}
\label{eq:ODE - iMCFgen solution}
\begin{aligned}
R\t = &\ \big( R_0^{1-\alpha} - (\alpha-1)  d^{-\alpha}  t \big)^{1-\alpha} ,  \\
& \ \text{with maximal existence time} \quad T_{\max} = \frac{R_0^{1-\alpha}}{(\alpha-1)  d^{-\alpha}} . 
\end{aligned}
\end{equation}

We note here that for $0 < \alpha < 1$ a solution of the same form exists for all $0 \leq t < \infty$.

\textit{Generalized mean curvature flow  $\alpha > 0$.} 
For a sphere, from the velocity law \eqref{eq:velocity law} and the ODE \eqref{eq:velocity ODE}, we now obtain that the radius $R\t$ satisfies the ODE, with $\alpha > 0$,
\begin{alignat}{3}
\label{eq:ODE - MCFgen}
\diff R\t = &\ - \Big( \frac{d}{R\t} \Big)^\alpha , & \qquad & \text{with initial value} \quad R(0) = R_0 ,
\end{alignat}
whose solution is given by
\begin{equation}
\label{eq:ODE - MCFgen solution}
\begin{aligned}
R\t = &\ \big( R_0^{1+\alpha} - (1+\alpha)  d^{\alpha}  t \big)^{1/(1+\alpha)} ,  \\
& \ \text{with maximal existence time} \quad T_{\max} = \frac{R_0^{1+\alpha}}{(1+\alpha)  d^{\alpha}} . 
\end{aligned}
\end{equation}
See, e.g., \cite[Beispiel~2.13]{Schulze_diss}.

\bbk In Figures~\ref{fig:conv_iMCF}, \ref{fig:conv_iMCFgen} and \ref{fig:conv_MCFgen} we report on convergence tests, respectively, for inverse mean curvature flow, generalised inverse mean curvature flow and generalised mean curvature flow both with $\alpha = 2$. The logarithmic plots show the $L^\infty(H^1)$ norm of the errors between the numerical and exact solutions for position, normal vector, and mean curvature (columns left to right). 
Our computations were carried out with initial radius $R_0 = 3$ on the time interval $[0,1]$, using a sequence of time step sizes $\tau_k = \tau_{k-1}/2$ with $\tau_0 = 0.2$, and a sequence of meshes with mesh widths $h_k \approx 2^{-1/2} h_{k-1}$. 
The top rows reporting on the spatial convergence rate, while the bottom rows are reporting on the temporal convergence. The observed convergence $O(h^2 + \tau^2)$ (note the reference lines) match the theoretical results of Theorem~\ref{theorem:fully discrete convergence}.

\ebk 

\begin{figure}[htbp]
	\includegraphics[width=0.9\textwidth]{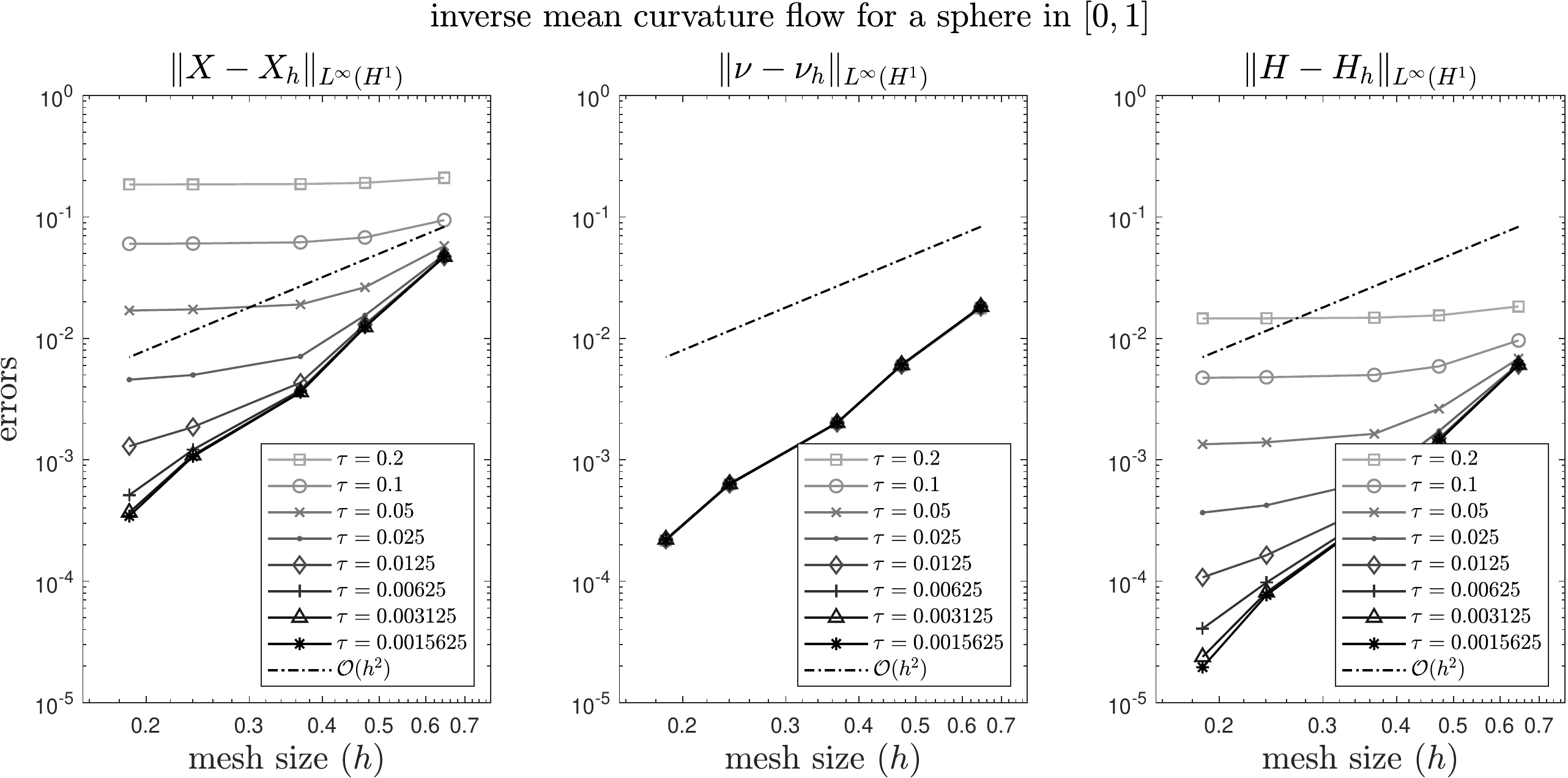}
	\\ \vspace{3mm}
	\includegraphics[trim={0 0 0 25},clip, width=0.9\textwidth]{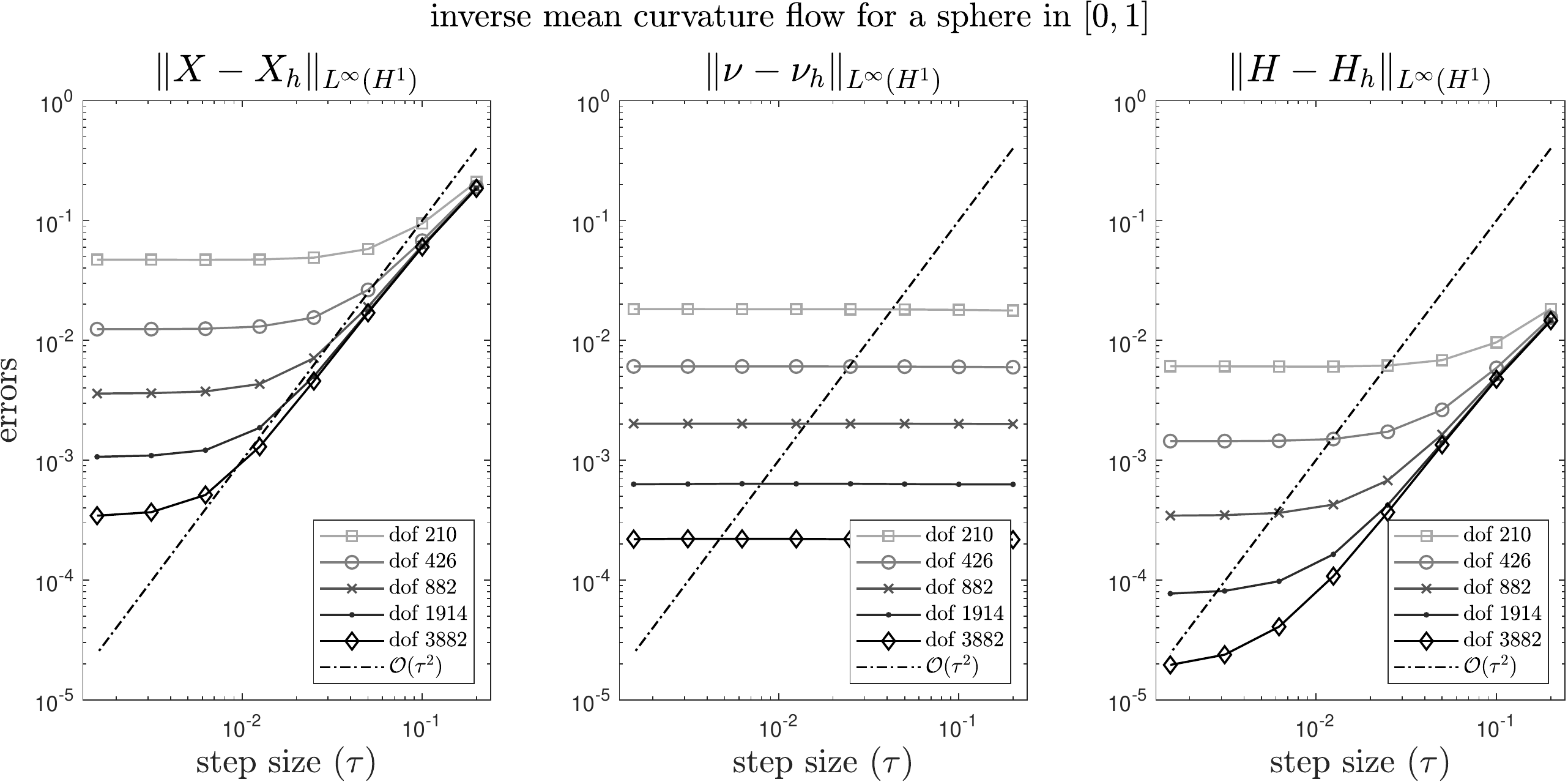}
	\caption{Spatial and temporal convergence of the BDF2 / quadratic ESFEM discretisation for the inverse mean curvature flow of a sphere for $T = 1$.}
	\label{fig:conv_iMCF}
\end{figure}

\begin{figure}[htbp]
	\includegraphics[width=0.9\textwidth]{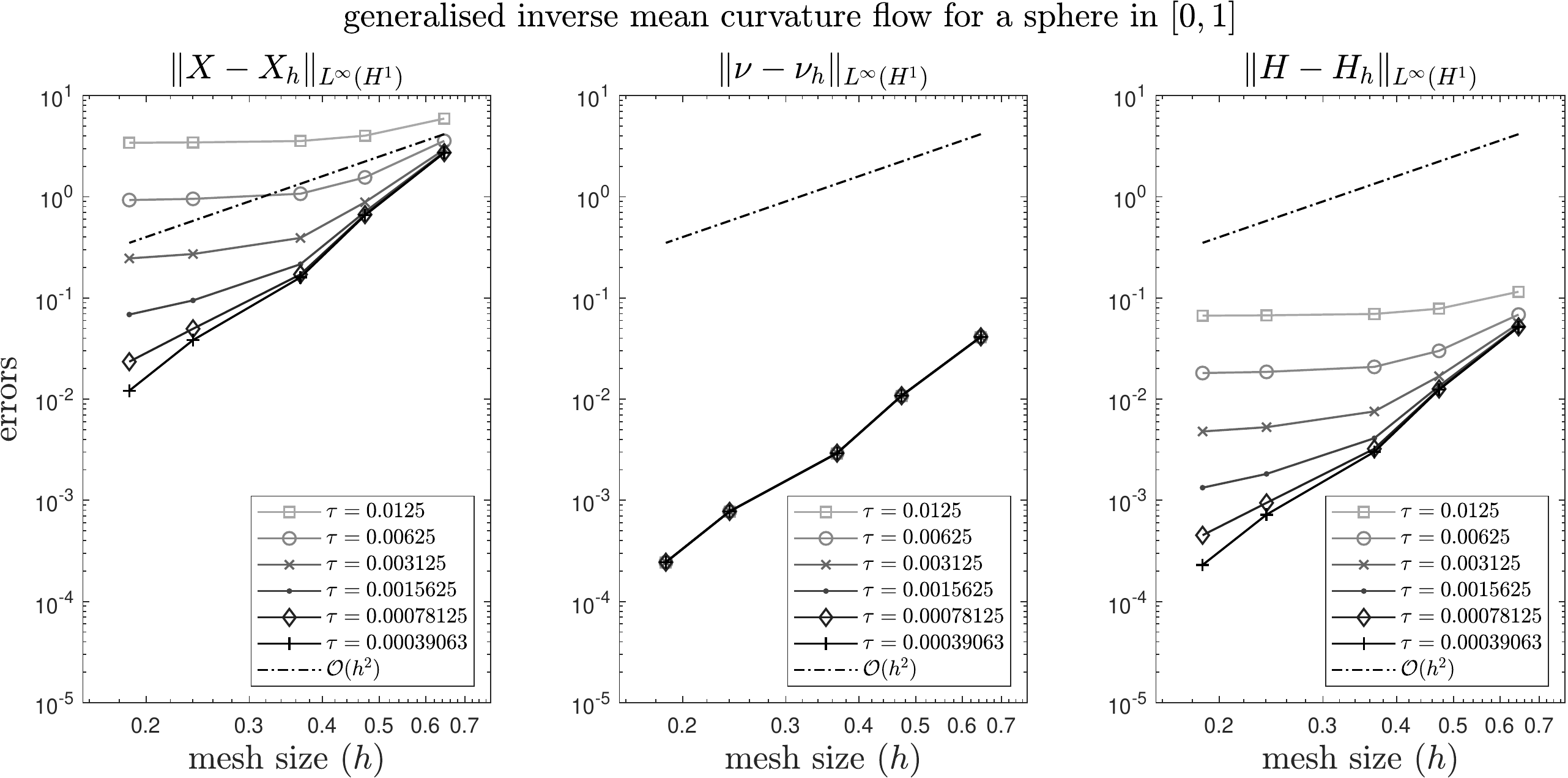}
	\\ \vspace{3mm}
	\includegraphics[trim={0 0 0 25},clip, width=0.9\textwidth]{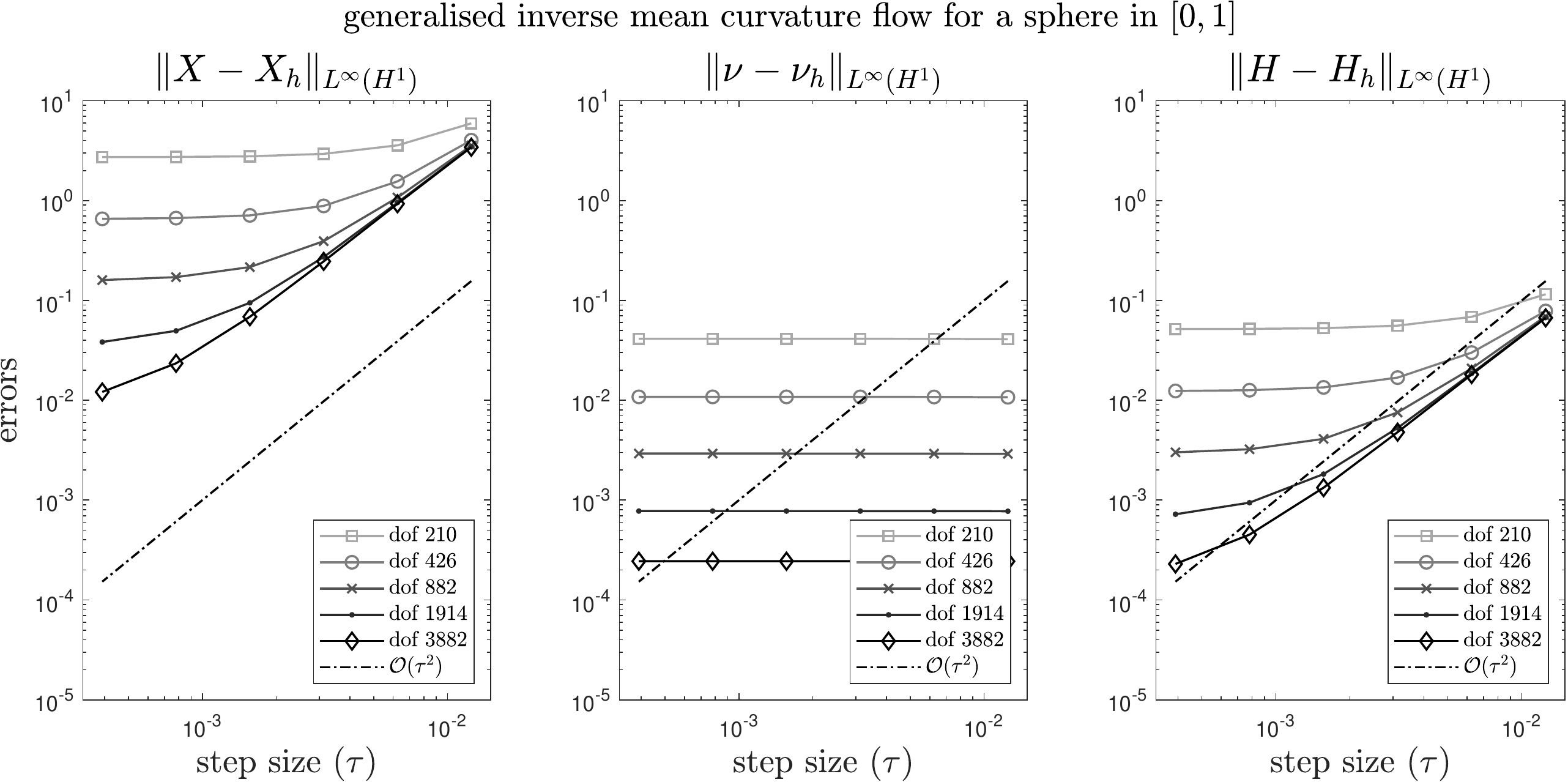}
	\caption{Spatial and temporal convergence of the BDF2 / quadratic ESFEM discretisation for generalized inverse mean curvature flow \bbk $\alpha=2$ \ebk of a sphere for $T = 1$.}
	\label{fig:conv_iMCFgen}
\end{figure}

\begin{figure}[htbp]
	\includegraphics[width=0.9\textwidth]{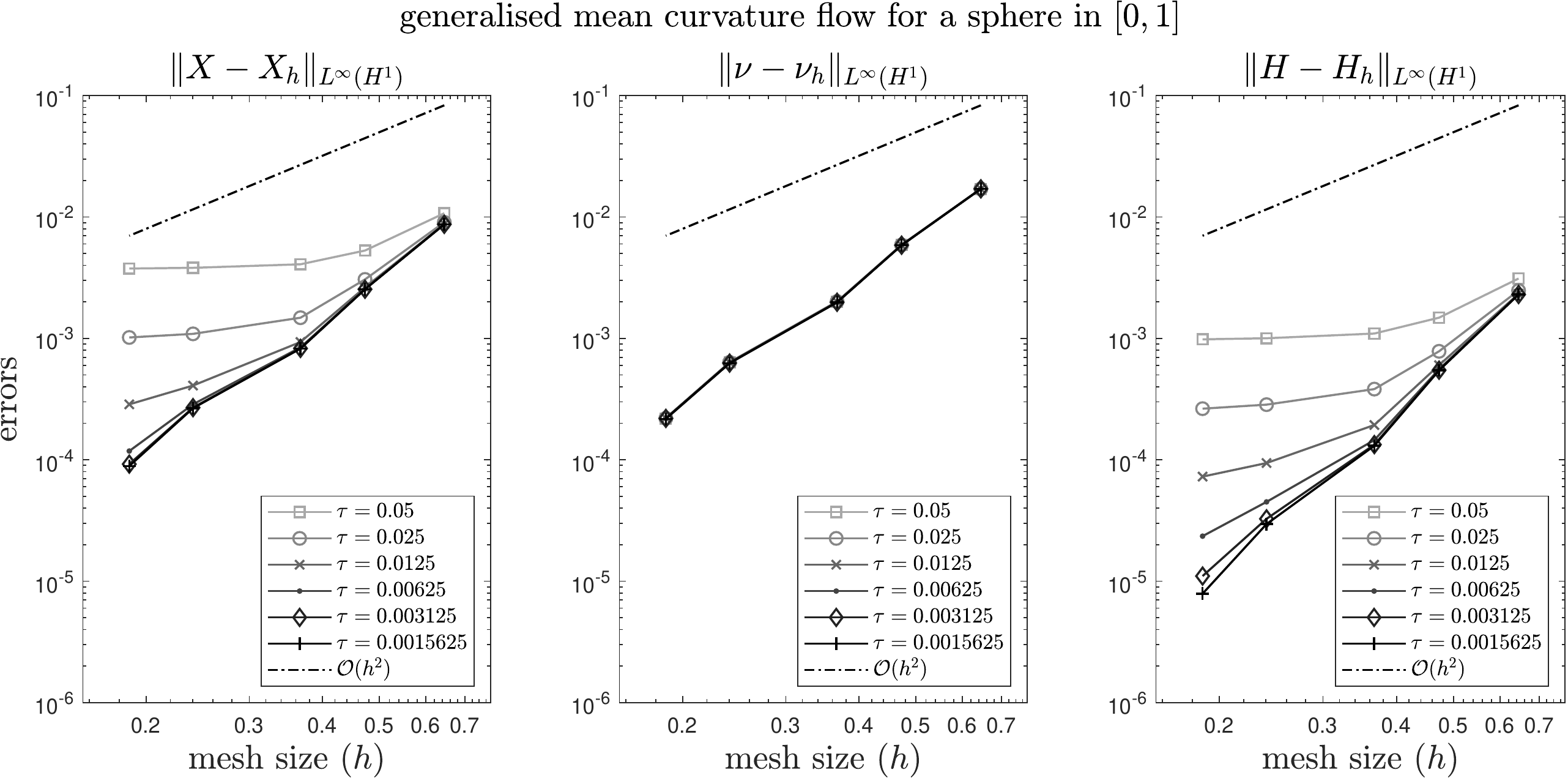}
	\\ \vspace{3mm}
	\includegraphics[trim={0 0 0 25},clip, width=0.9\textwidth]{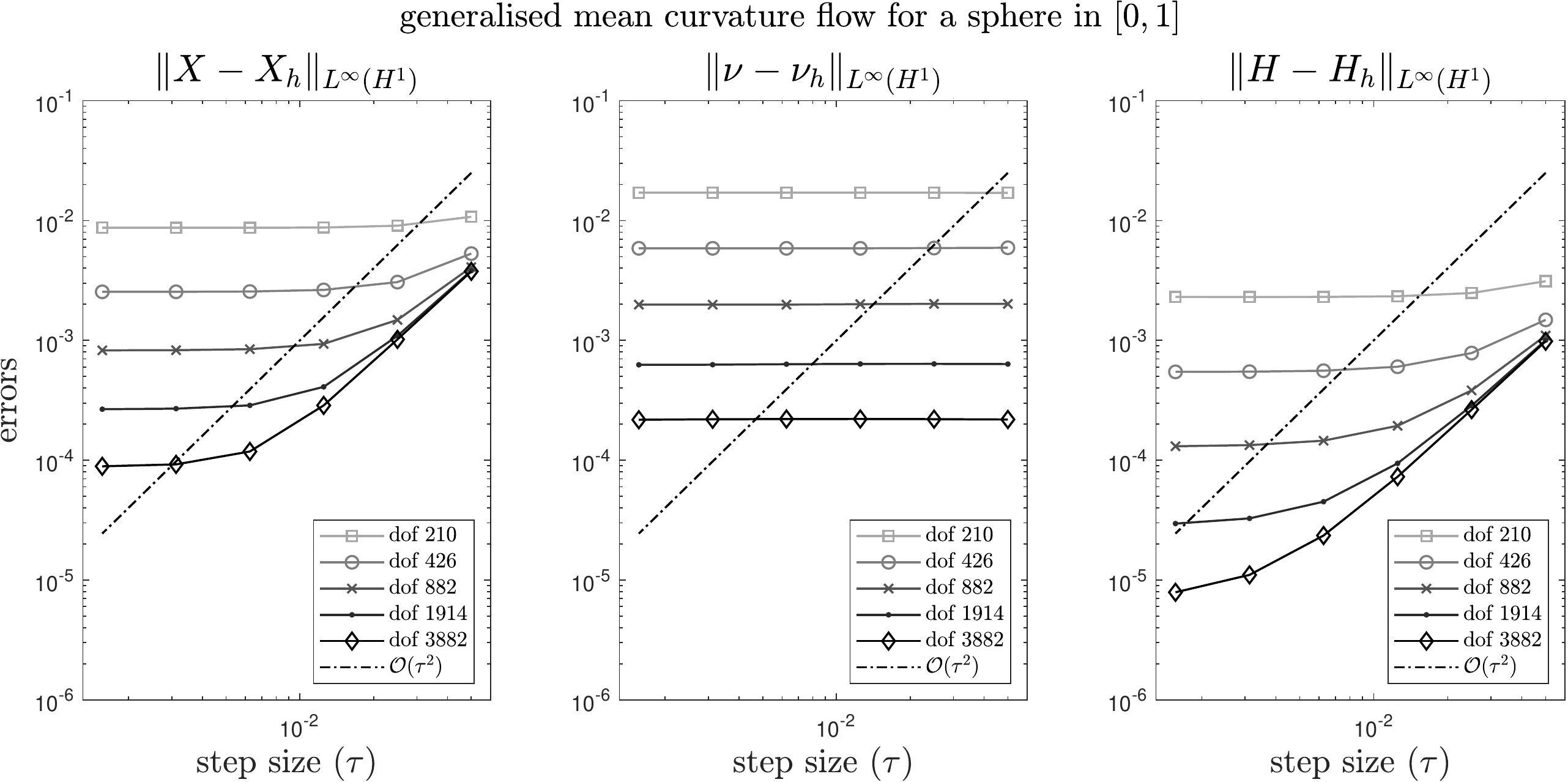}
	\caption{Spatial and temporal convergence of the BDF2 / quadratic ESFEM discretisation for generalized mean curvature flow \bbk $\alpha=2$ \ebk of a sphere for $T = 1$.}
	\label{fig:conv_MCFgen}
\end{figure}

\subsection{Monotone geometric quantities}

We report on surface evolutions under various generalized mean curvature flows (in two dimensions), and on the time-evolution of corresponding monotone geometric quantities for surfaces with \emph{non-negative} mean curvature. Such quantities and their monotonicity are often used in analysis to prove various results (e.g.~convergence to a round point for $H^\alpha$-flows), see~\cite[Section~6]{HuiskenPolden}, \cite[Appendix~A]{Schulze_2}, and in particular \cite{Schnurer}, where such quantities are found by the aid of a randomized algorithmic test.

The fact that the algorithm analysed in this paper preserves the monotonicity of such quantities is of interest both from an analytical and a numerical viewpoint.

\medskip
For \emph{inverse mean curvature flow} an important monotone quantity is the Hawking mass, see \cite{Hawking}, or \cite[Section~6]{HuiskenPolden}:
\begin{equation}
\label{eq:Hawking mass}
m_{\text{H}}(\Ga[X]) = \sqrt{\frac{\text{Area}(\Ga[X])}{16\pi}} \bigg( 1 - \frac{1}{16\pi} \int_{\Ga[X]} H^2 \bigg) ,
\end{equation}
which is non-decreasing in time, i.e.~$\tfrac{\d}{\d t} m_{\text{H}}(\Ga[X]) \geq 0$.

For \emph{generalized mean curvature flow} ($H^\alpha$-flow), for $1 \leq \alpha \leq 5$, an important monotone quantity is the following \cite[Appendix~A]{Schulze_2}, (also expressed using the principal curvatures $\kappa_j$):
\begin{equation}
\label{eq:MCFgen monotone quantity}
\begin{aligned}
m_{\text{$H^\alpha$-flow}}(\Ga[X]) = &\ \max_{\Ga[X]} \frac{ H^{2\alpha} ( 2|A|^2 - H^2 ) }{ ( H^2 - |A|^2 )^2 } \\
= &\ \max_{\Ga[X]} \frac{ (\kappa_1 + \kappa_2)^{2\alpha} (\kappa_1 - \kappa_2)^2 }{ 4 \kappa_1^2 \kappa_2^2 } ,
\end{aligned}
\end{equation}
which is non-increasing in time, i.e.~$\tfrac{\d}{\d t} m_{\text{$H^\alpha$-flow}}(\Ga[X]) \leq 0$.

Let us point out that both monotone quantities can be very easily computed from the geometric quantities obtained from our algorithm.

In Figure~\ref{fig:monotone} we report on the numerical solution and the corresponding monotone quantity for three different generalized flows: inverse mean curvature flow (iMCF), powers of mean curvature flow ($H^\alpha$-flow) with $\alpha = 2$ and $6$ (plotted left to right in the figure). In the experiments we have used time step size $\tau = 0.0125$, and surfaces with degrees of freedom $6242$, $4210$, and $4242$ respectively.

For inverse mean curvature flow and for the $H^\alpha$-flow with $\alpha = 2$ the  quantities \eqref{eq:Hawking mass} and \eqref{eq:MCFgen monotone quantity}, respectively, are known to be monotone (non-decreasing and non-increasing). For $\alpha = 6$, to our knowledge, such a result is an open question. This numerical evidence suggest that it is not monotone. However, we strongly note here, that the small jump at $t \approx 0.25$ is \bbk probably \ebk a numerical artefact, which (in our experience) is due to violations of discrete maximum principles \bbk (the computed normal velocity becomes non-positive, and hence inverting $V_h = H_h^6$ is problematic, in such cases $V_h$ was appropriately truncated). The same maximum principle violation occurs for smaller time step sizes. The phenomena should be investigated further in subsequent numerical experiments. \ebk The algorithm could benefit from applying the techniques of \cite{ESFEM_DMP}.

\begin{figure}[htbp]
	\includegraphics[trim={5 40 50 70},clip,width=0.31\textwidth]{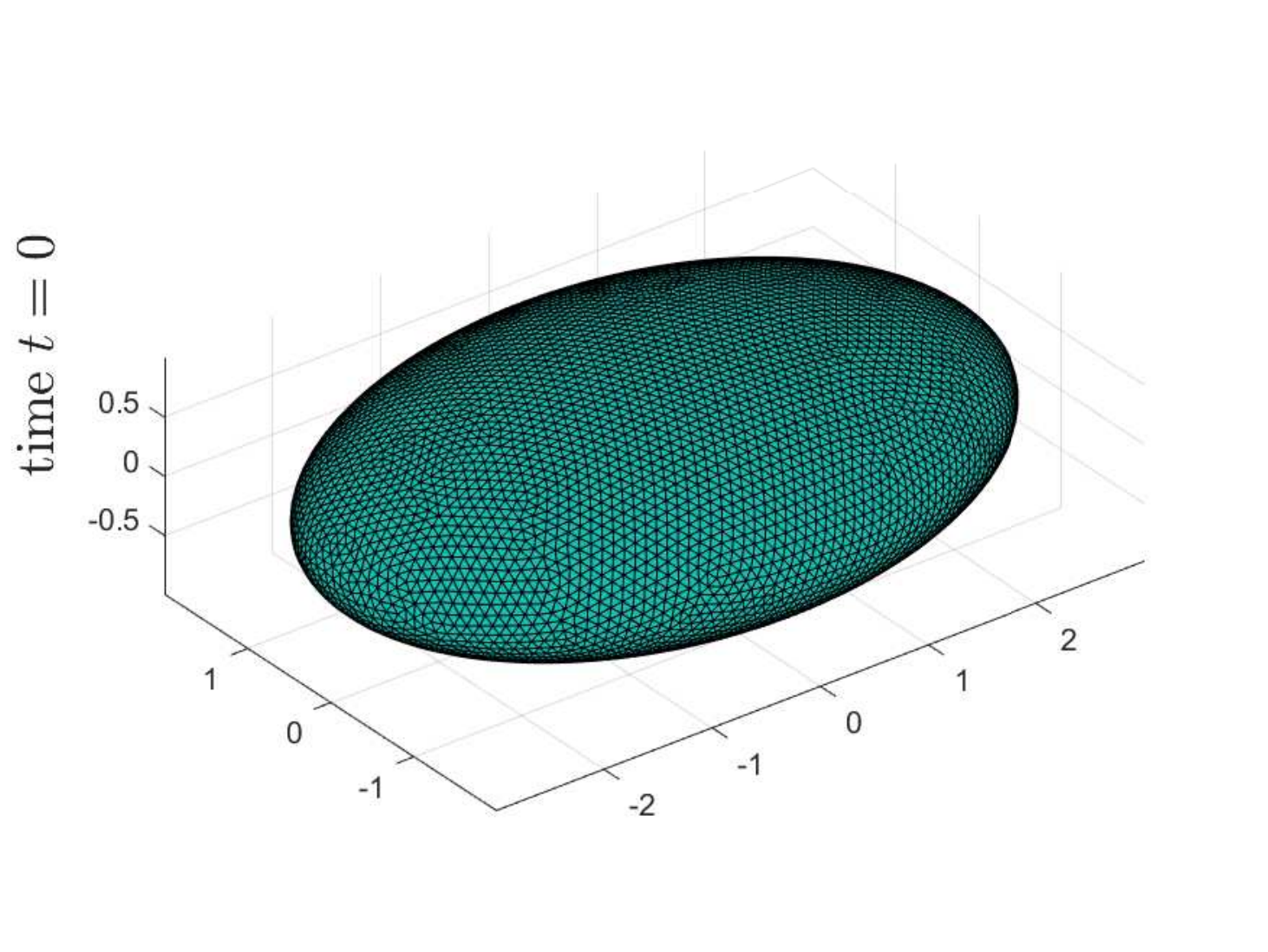} 
	\includegraphics[trim={12 37 40 70},clip,width=0.31\textwidth]{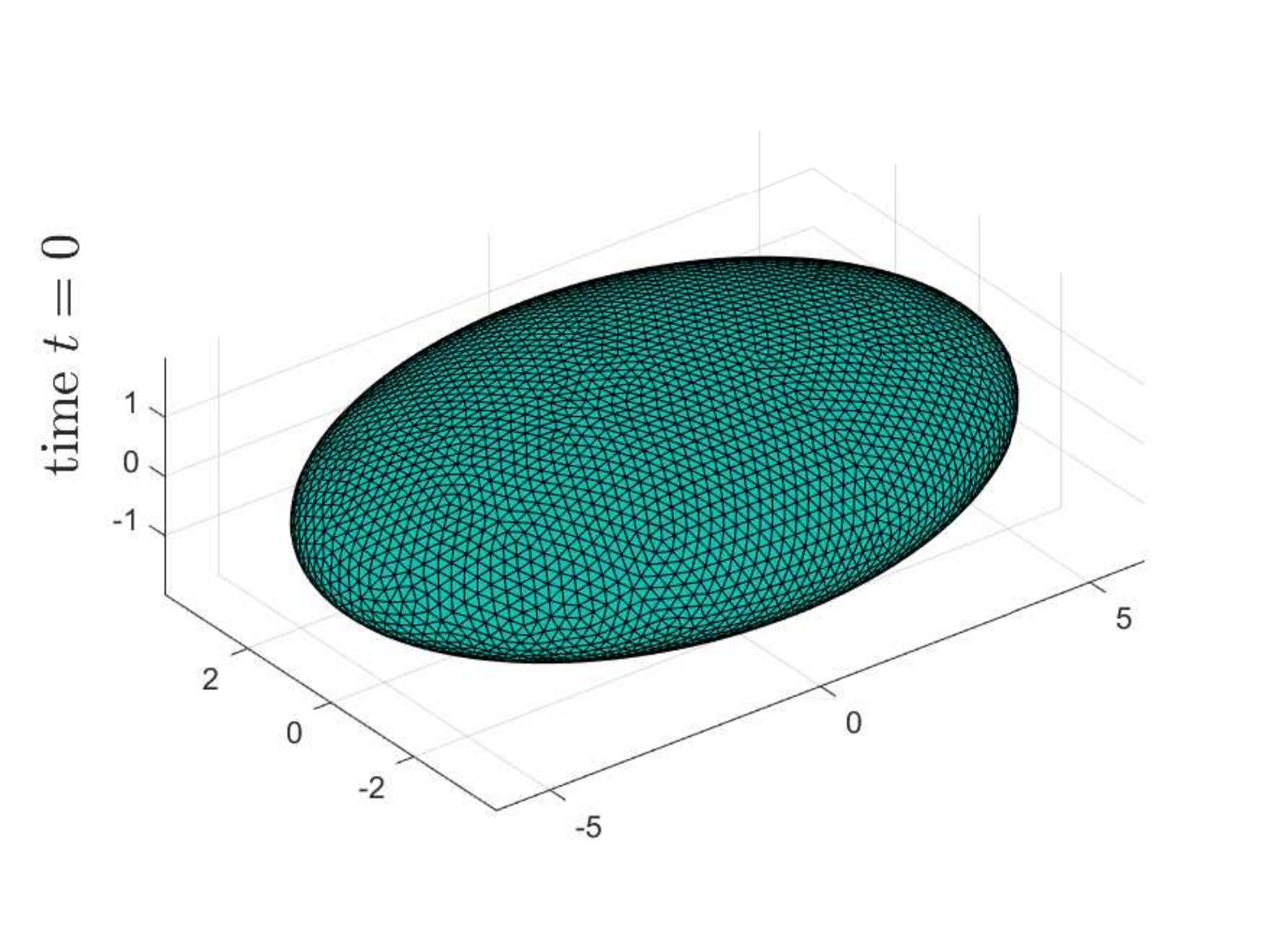} 
	\includegraphics[trim={50 20 49 50},clip,width=0.32\textwidth]{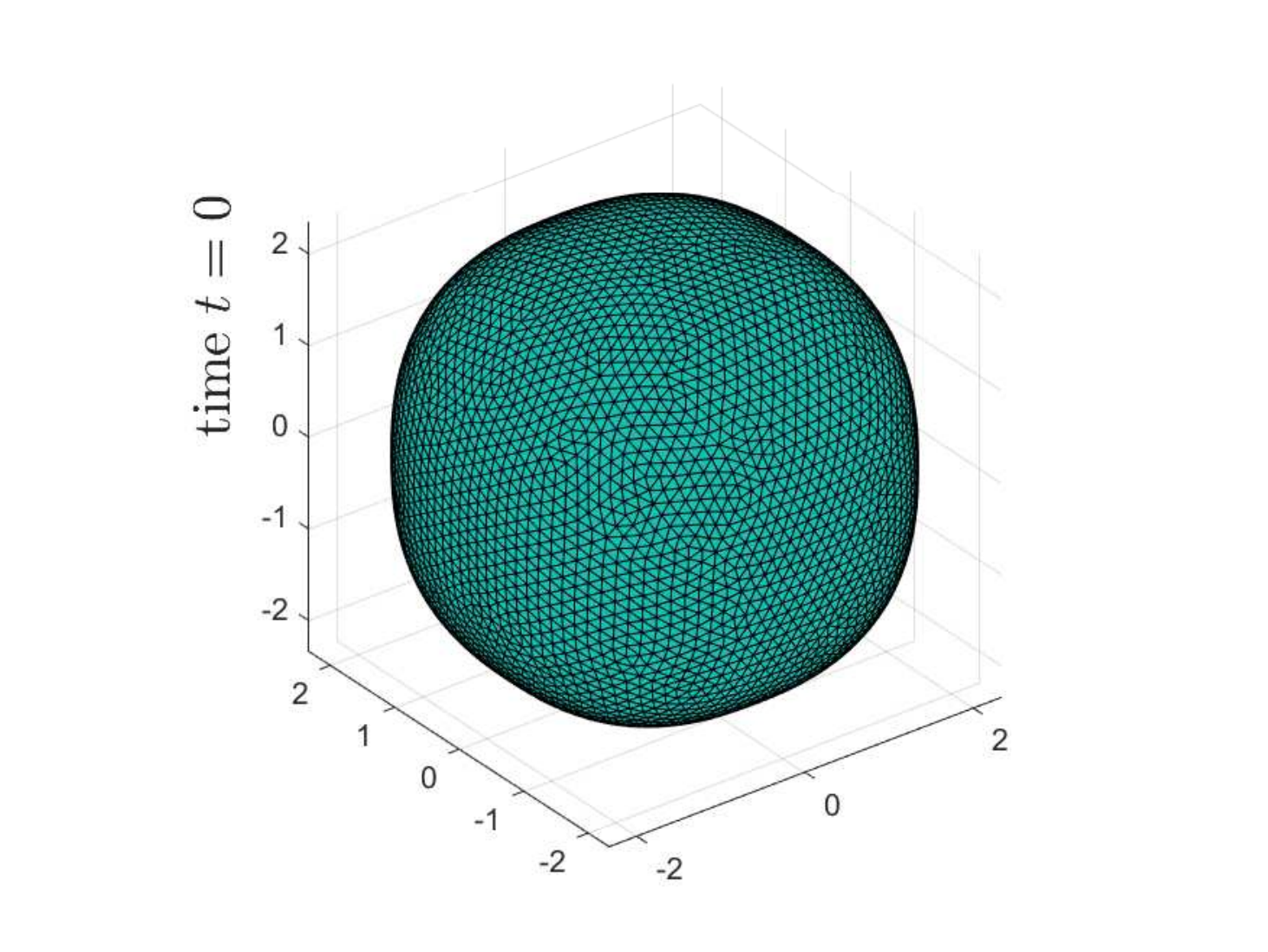} 
	\\
%	\includegraphics[trim={5 40 50 70},clip,width=0.32\textwidth]{} 
%	\includegraphics[trim={12 35 40 70},clip,width=0.32\textwidth]{} 
%	\includegraphics[trim={50 20 50 50},clip,width=0.32\textwidth]{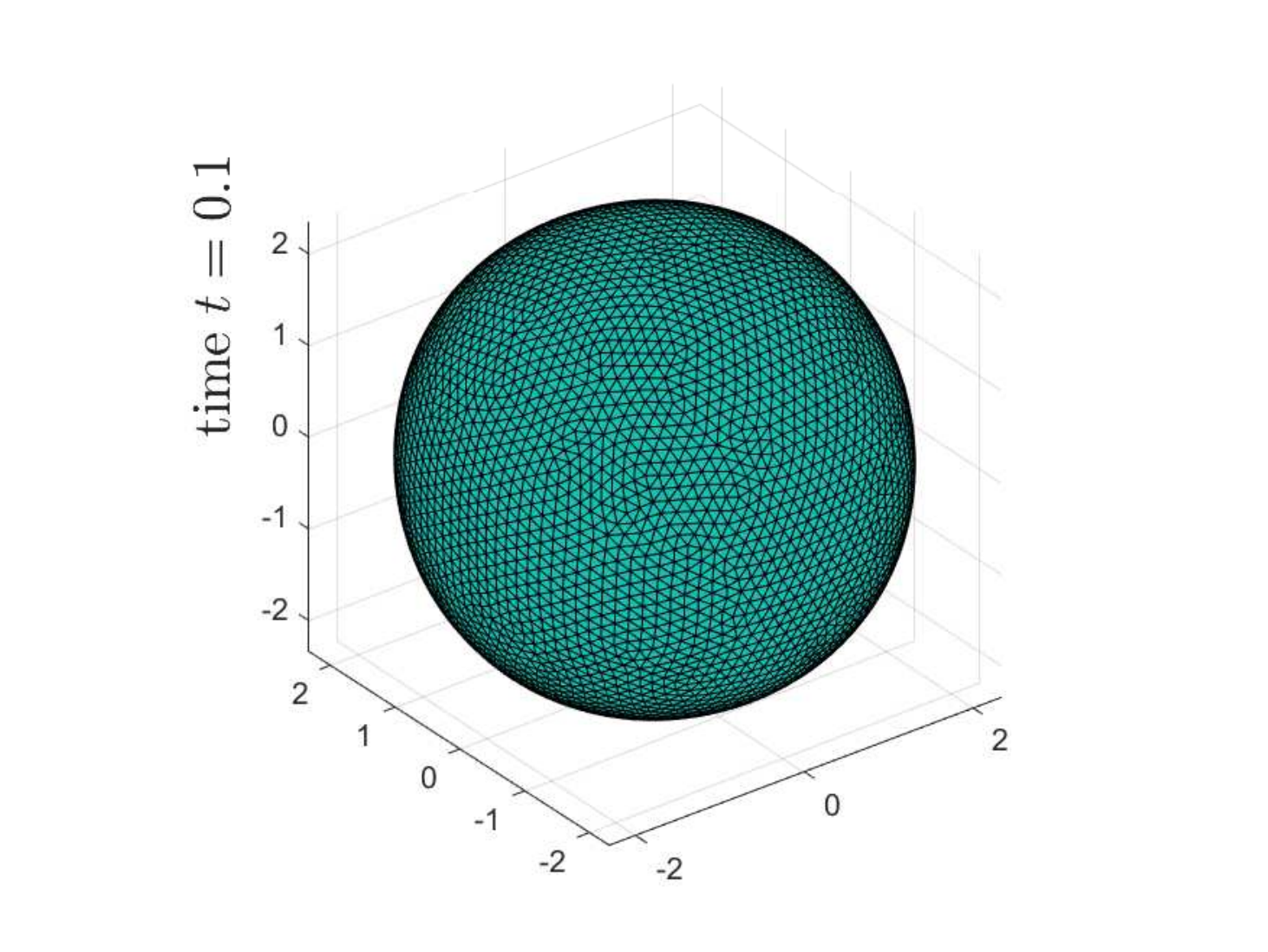} 
%	\\
	\includegraphics[trim={10 20 50 60},clip,width=0.32\textwidth]{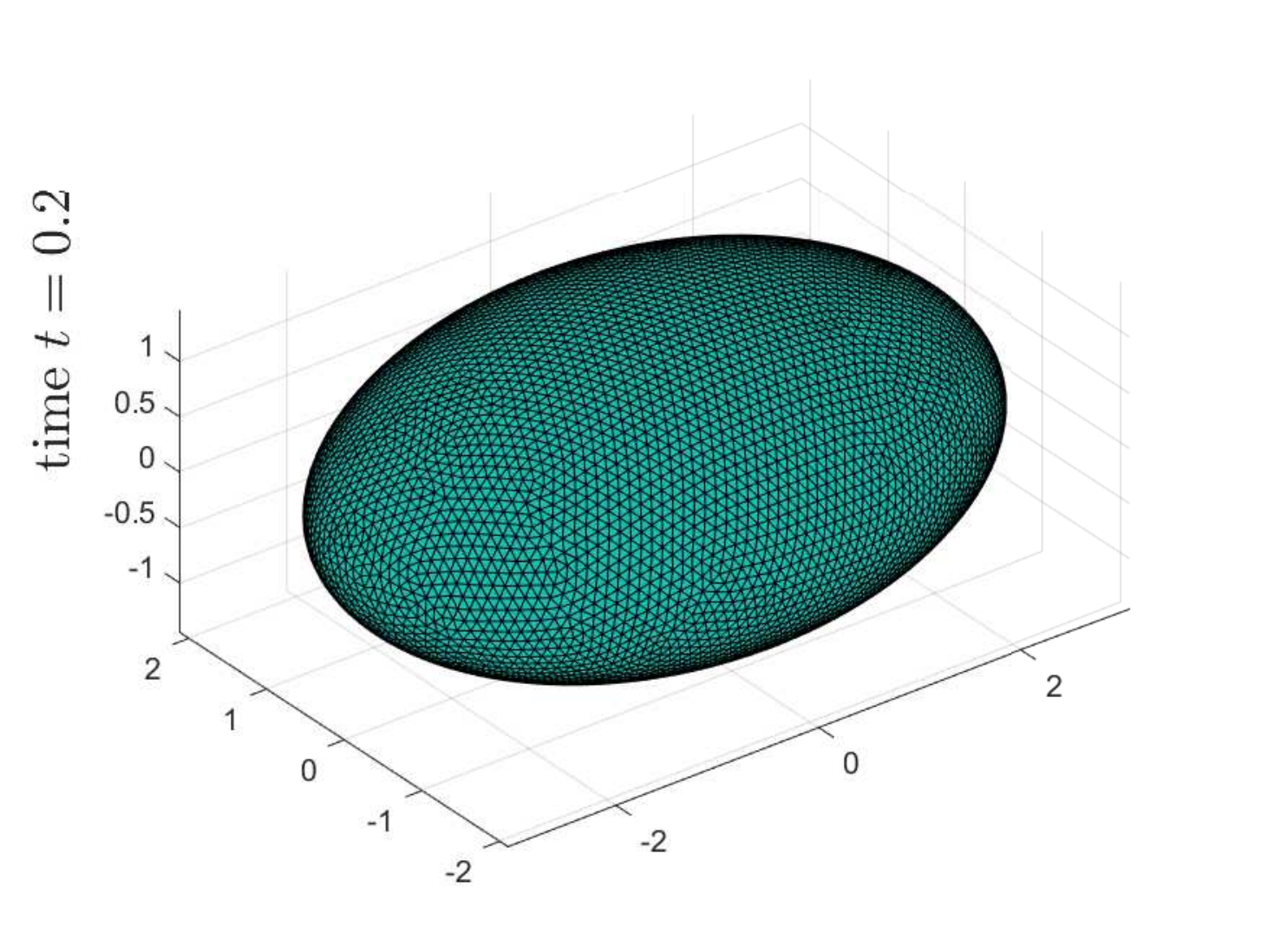} 
	\includegraphics[trim={12 30 35 60},clip,width=0.32\textwidth]{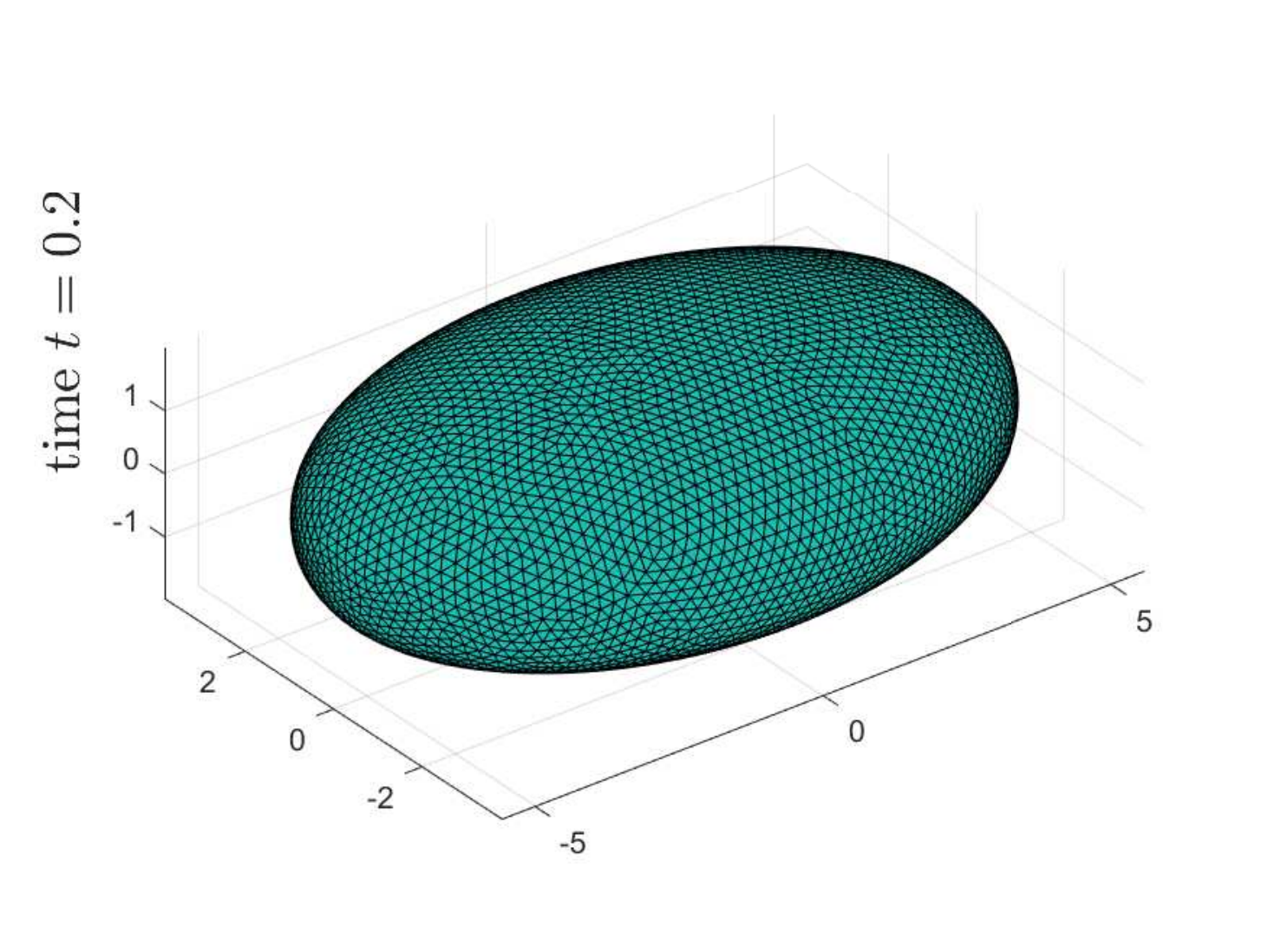} 
	\includegraphics[trim={50 20 50 50},clip,width=0.32\textwidth]{"MCFgen6_surf_Cuboid_01"} % _02 if the other is commented
	\\
	\includegraphics[trim={30 20 50 50},clip,width=0.32\textwidth]{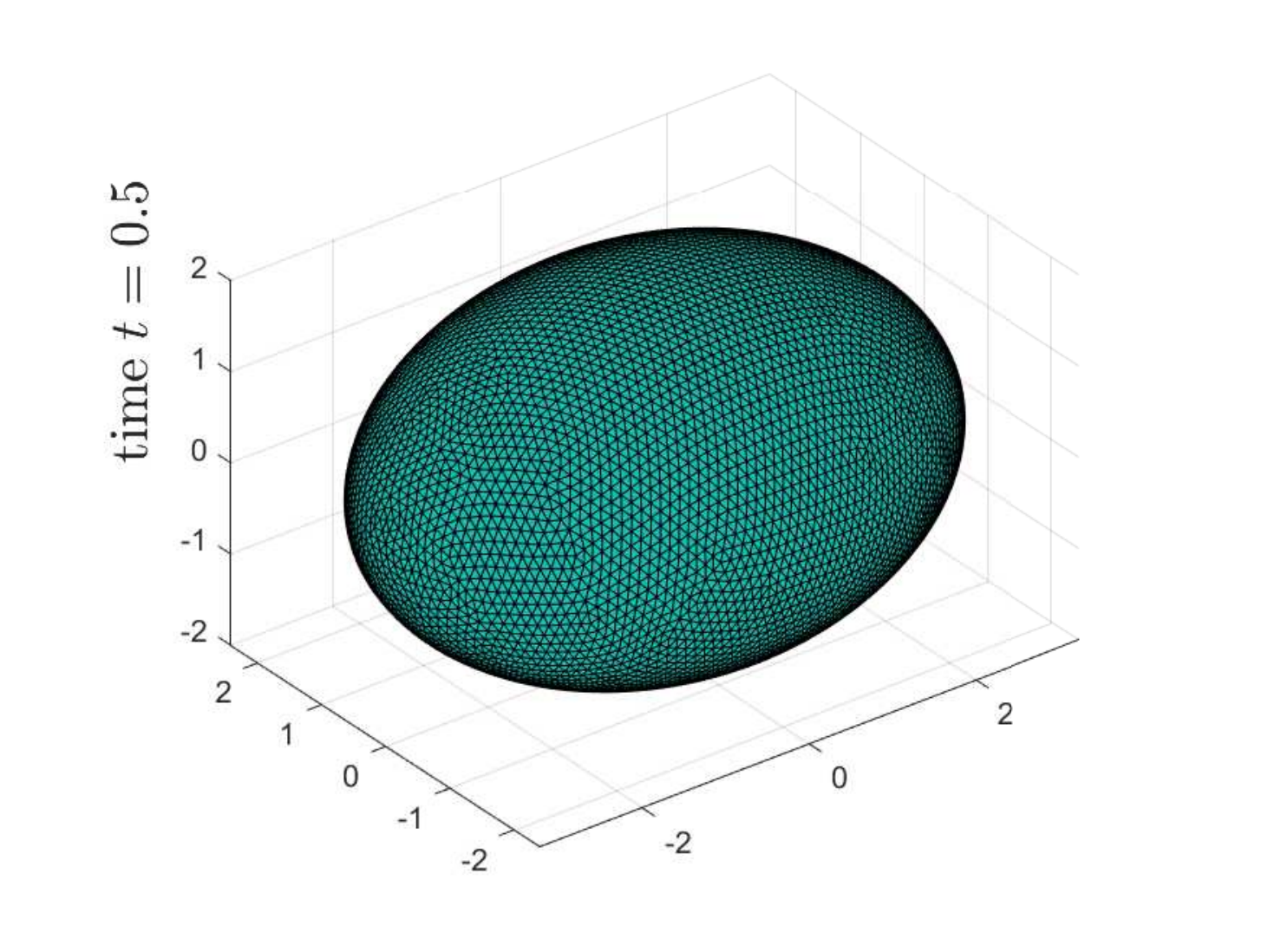} 
	\includegraphics[trim={15 27 35 60},clip,width=0.32\textwidth]{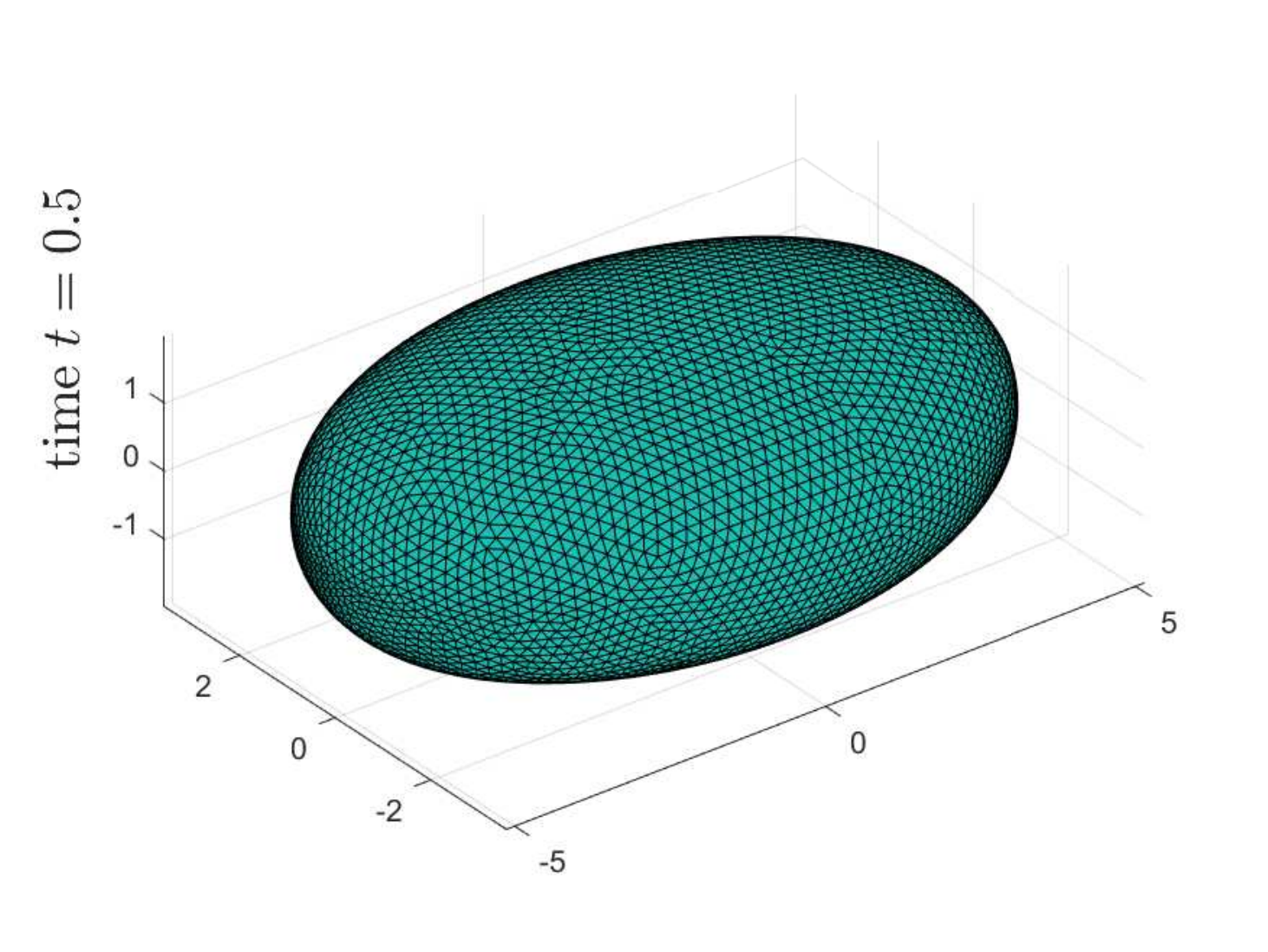} 
	\includegraphics[trim={50 20 50 50},clip,width=0.32\textwidth]{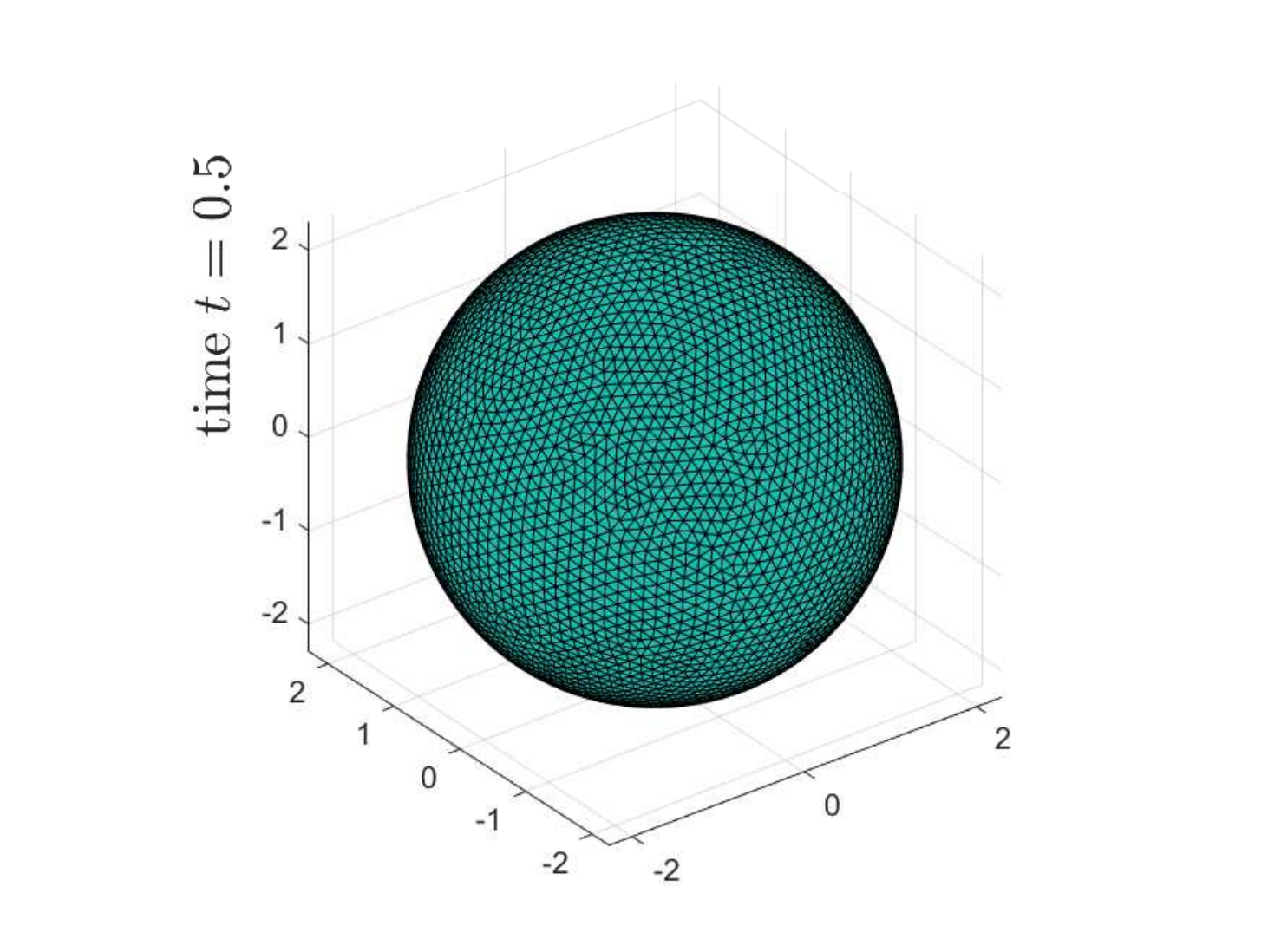} 
	\\
	\includegraphics[trim={50 20 50 50},clip,width=0.32\textwidth]{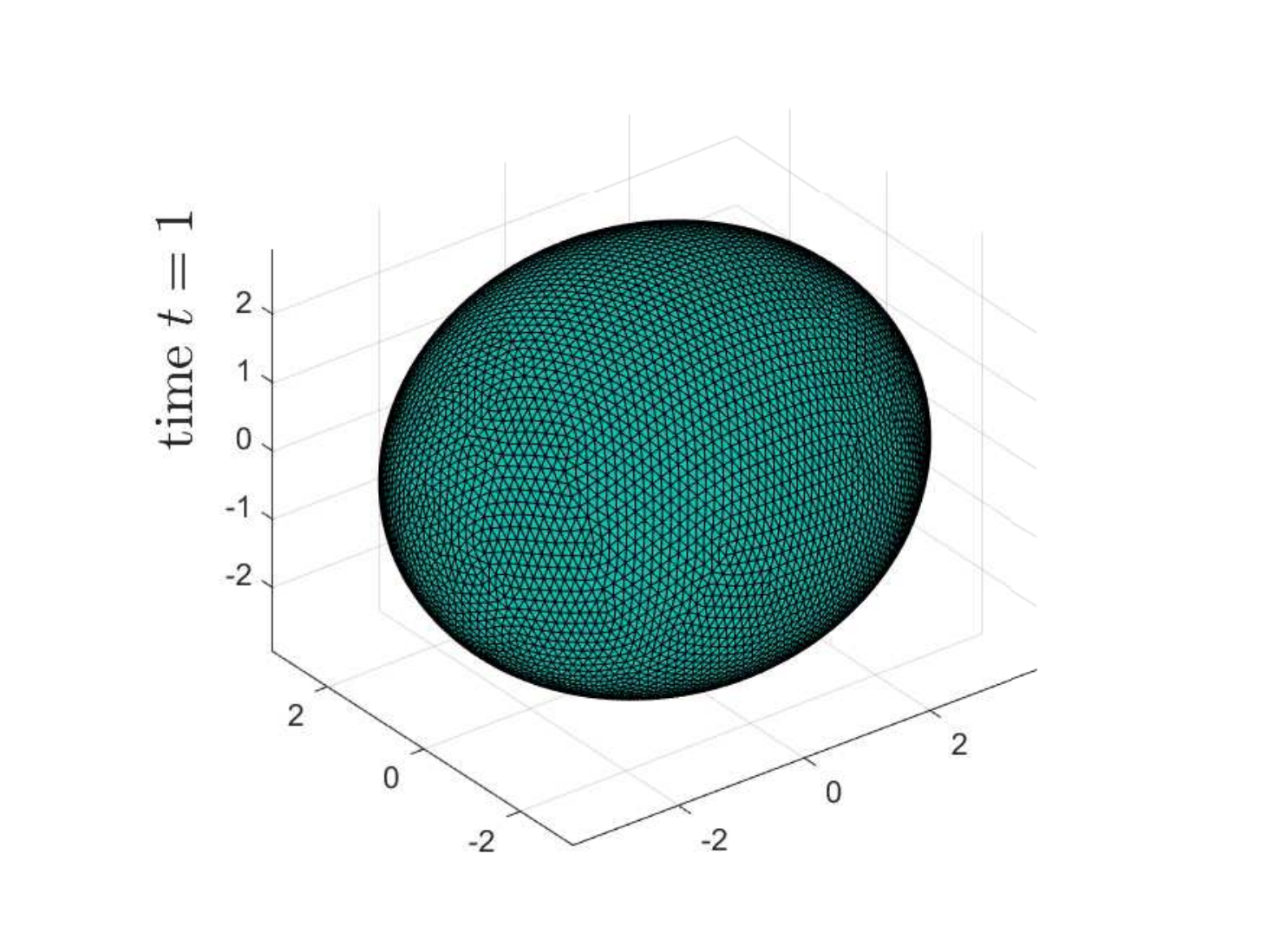} 
	\includegraphics[trim={15 20 35 70},clip,width=0.32\textwidth]{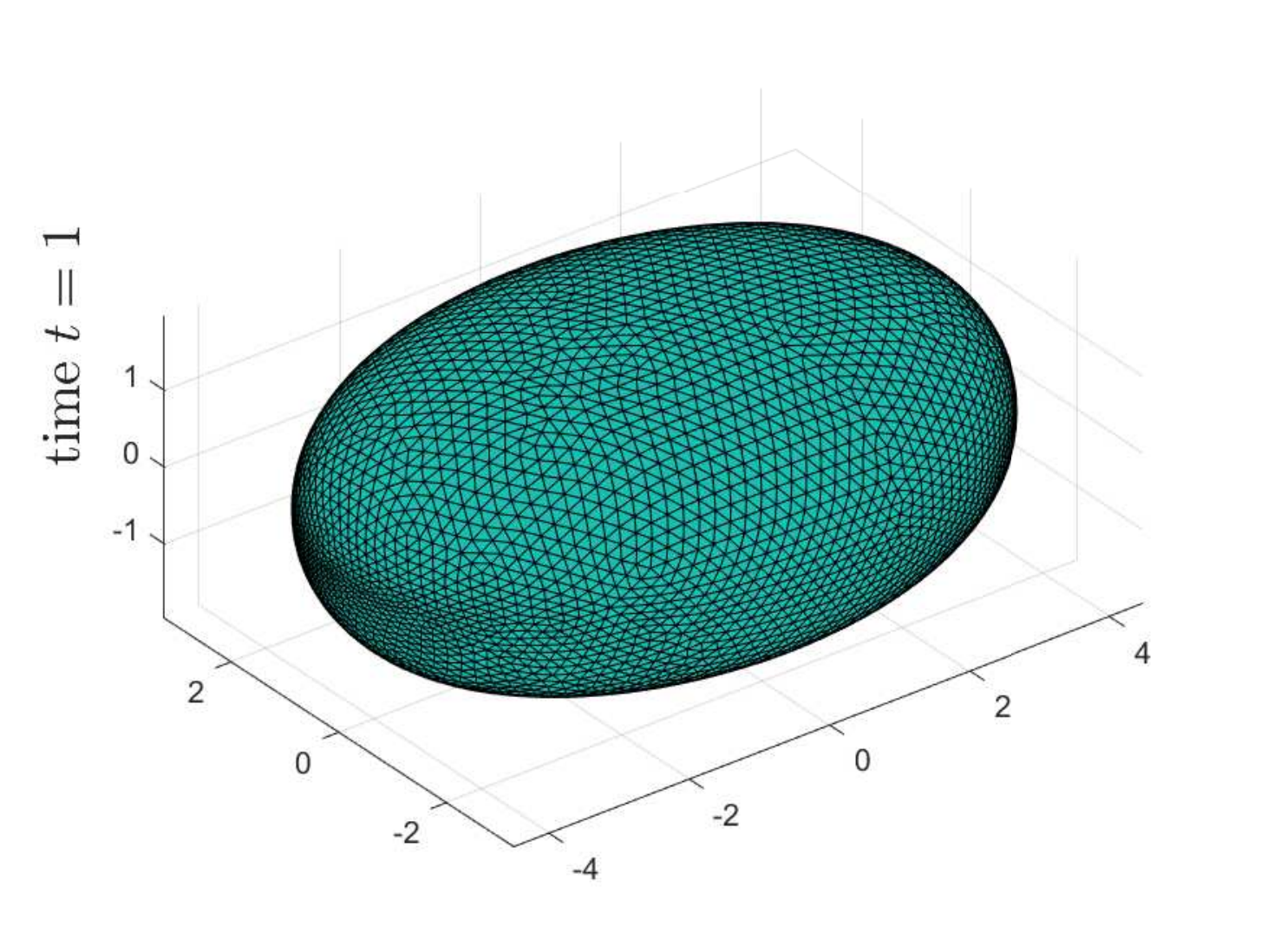} 
	\includegraphics[trim={50 20 50 50},clip,width=0.32\textwidth]{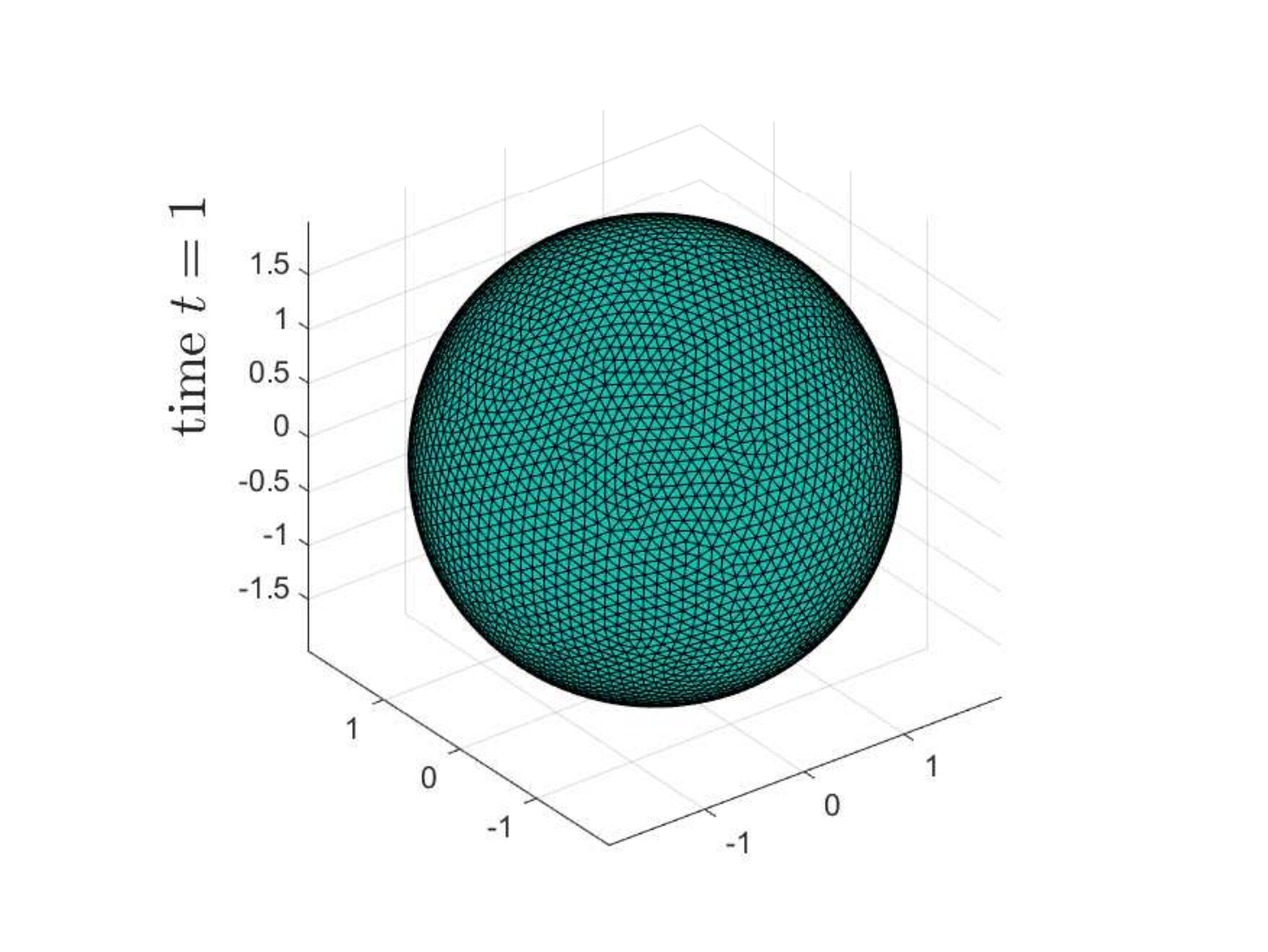} 
	\\
	\includegraphics[width=0.3\textwidth]{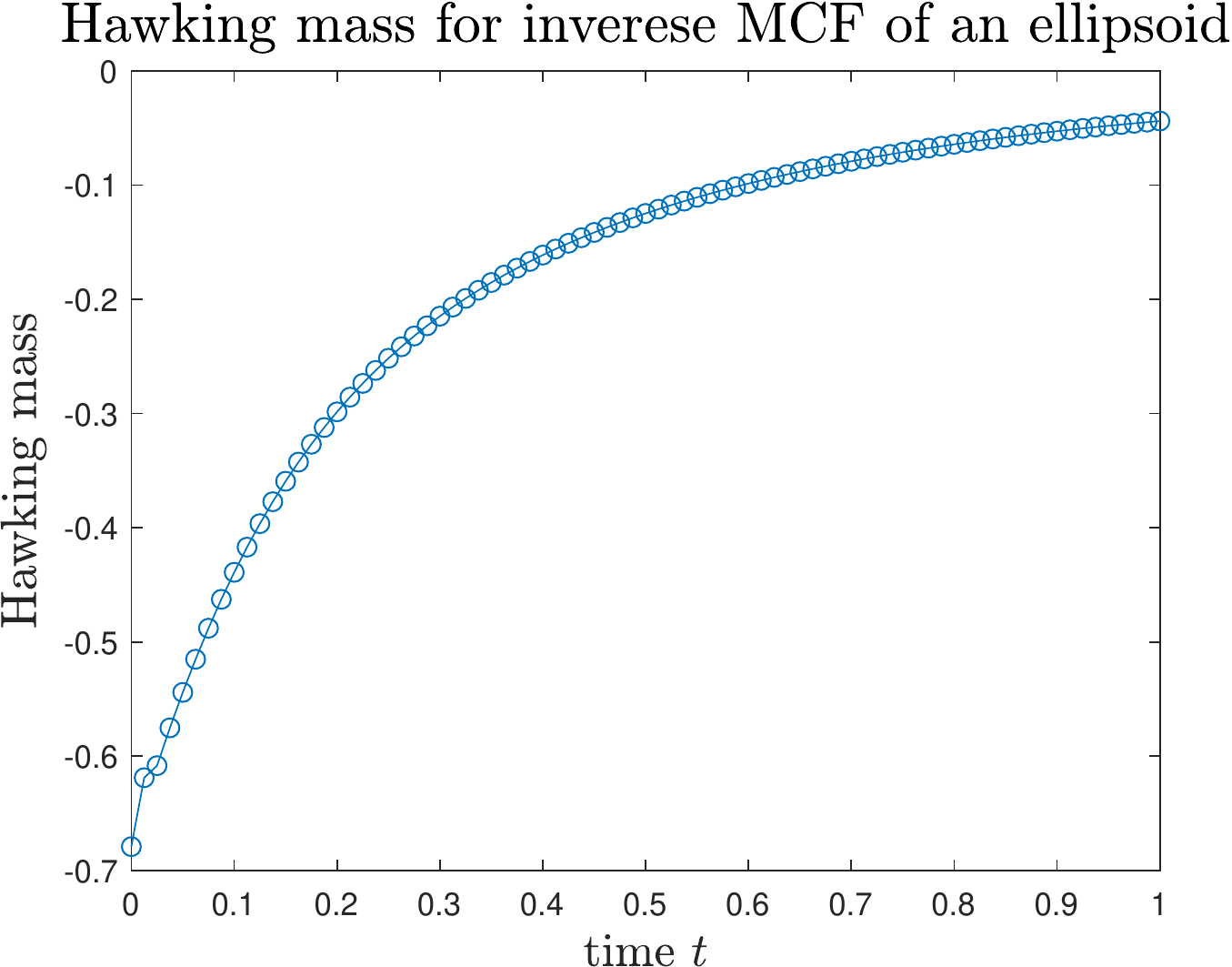}
	\includegraphics[width=0.3\textwidth]{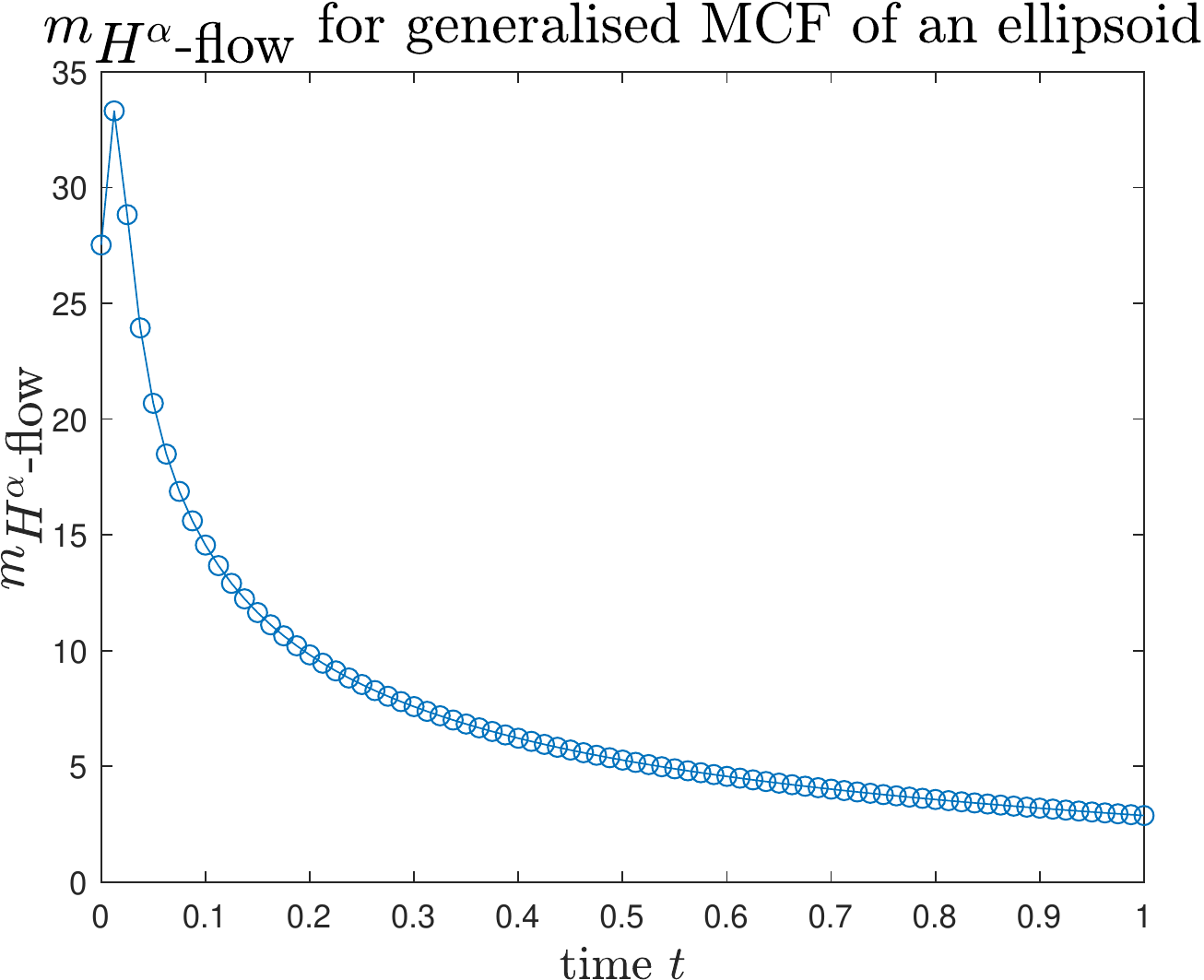}
	\includegraphics[width=0.3\textwidth]{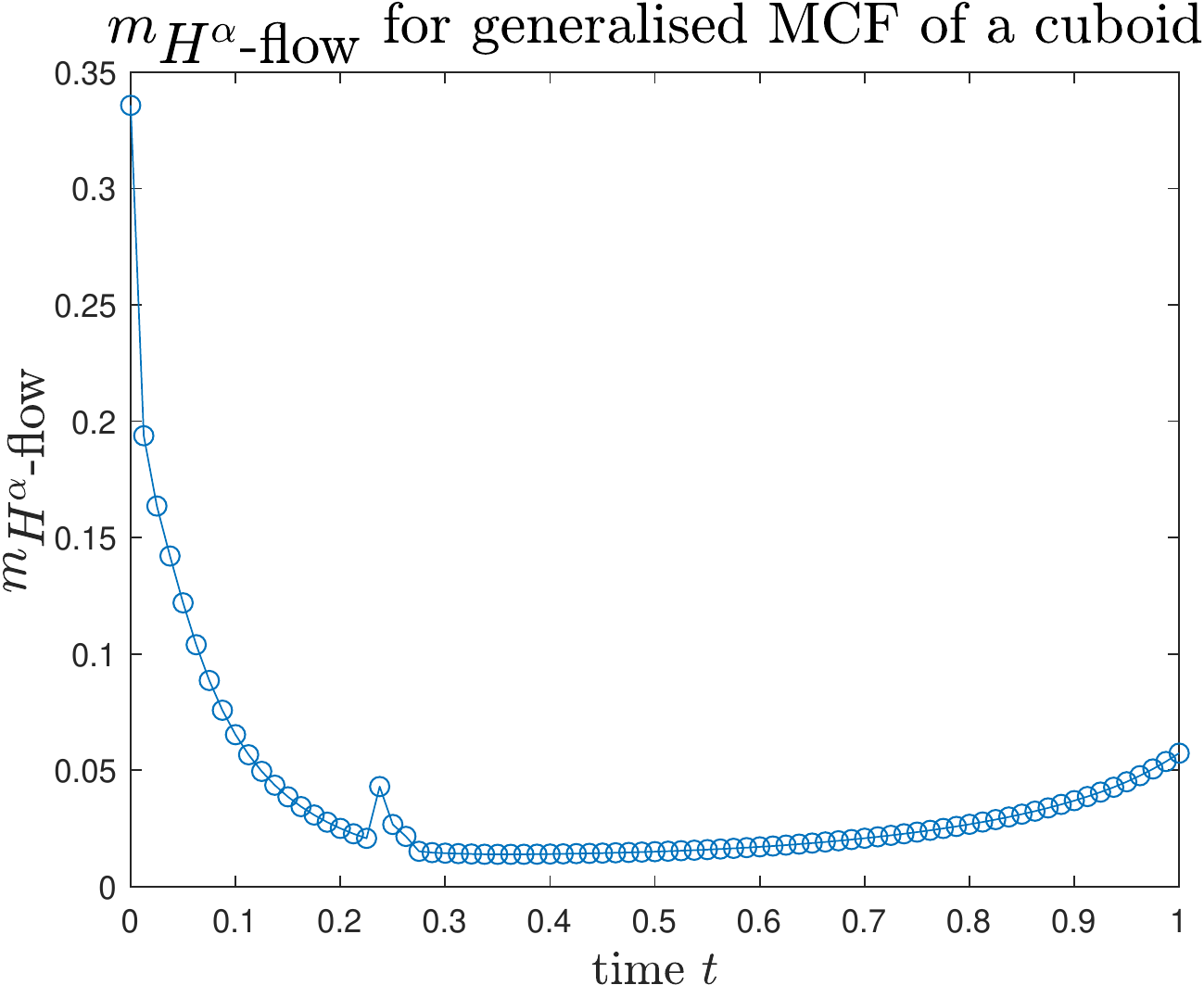}
	\caption{Surface evolutions (from left to right, inverse mean curvature flow, $H^2$-flow, and $H^6$-flow) at different times (from top to bottom) and corresponding monotone quantities (bottom row)}
	\label{fig:monotone}
\end{figure}

\subsection{Generalized mean curvature flows of non-convex surfaces}

To complement our theoretical results, a numerical experiment is presented here for generalized mean curvature flows of two \emph{non-convex} initial surfaces $\Ga^0 \subset \R^3$, given by
\begin{equation}
	\begin{aligned}
		\Ga^0 = &\ \Big\{ x \in \R^3 \mid x_1^2 + x_2^2 + 2 x_3^2 \big(x_3^2 - 159/200\big)  - 0.5 \bbk = 0 \ebk \Big\} , \qquad \text{cf.~\cite{ElliottStyles_ALEnumerics}}, \\
		\text{and } \qquad \Ga^0 = &\ \Big\{ x \in \R^3 \mid (x_1^2-1)^2 + (x_2^2-1)^2 + (x_3^2-1)^2 - 1.05  \bbk = 0 \ebk \Big\} .
	\end{aligned}
\end{equation}
These initial surfaces have regions with both negative and positive mean curvature, and hence of a class not covered by our theorems. Nevertheless, our algorithm can be still used to compute numerical solutions of generalized flows.

In Figure~\ref{fig:iMCF_dumbbell}, \ref{fig:MCFgen_dumbbell}, and \ref{fig:iMCFgen_genus5}, we respectively report on the numerical solution to the inverse mean curvature flow and for the $H^\alpha$-flow and $-H^{-\alpha}$-flow (with $\alpha = 2$). For the first two experiments we have used a dumbbell shaped two-dimensional surface with $3538$ nodes, while for the third experiment the genus 5 surface with $11424$ nodes. For all experiments we have used a time step size $\tau = 0.0015625$. 

In the case of inverse mean curvature flow, Figure~\ref{fig:iMCF_dumbbell}, some slight surface distortions can be observed around the neck (where mean curvature switches sign), this is however not observable for the $-H^{-\alpha}$-flow in Figure~\ref{fig:iMCFgen_genus5}. The algorithm performs rather robust for the $H^\alpha$-flow, rapidly shrinking towards a point. Note that the different flows are integrated until different final times, see the figures. In the case of Figure~\ref{fig:MCFgen_dumbbell} also note the rapidly shrinking surface.

\begin{figure}[htbp]
	\includegraphics[trim={60 20 90 20},clip,width=0.31\textwidth]{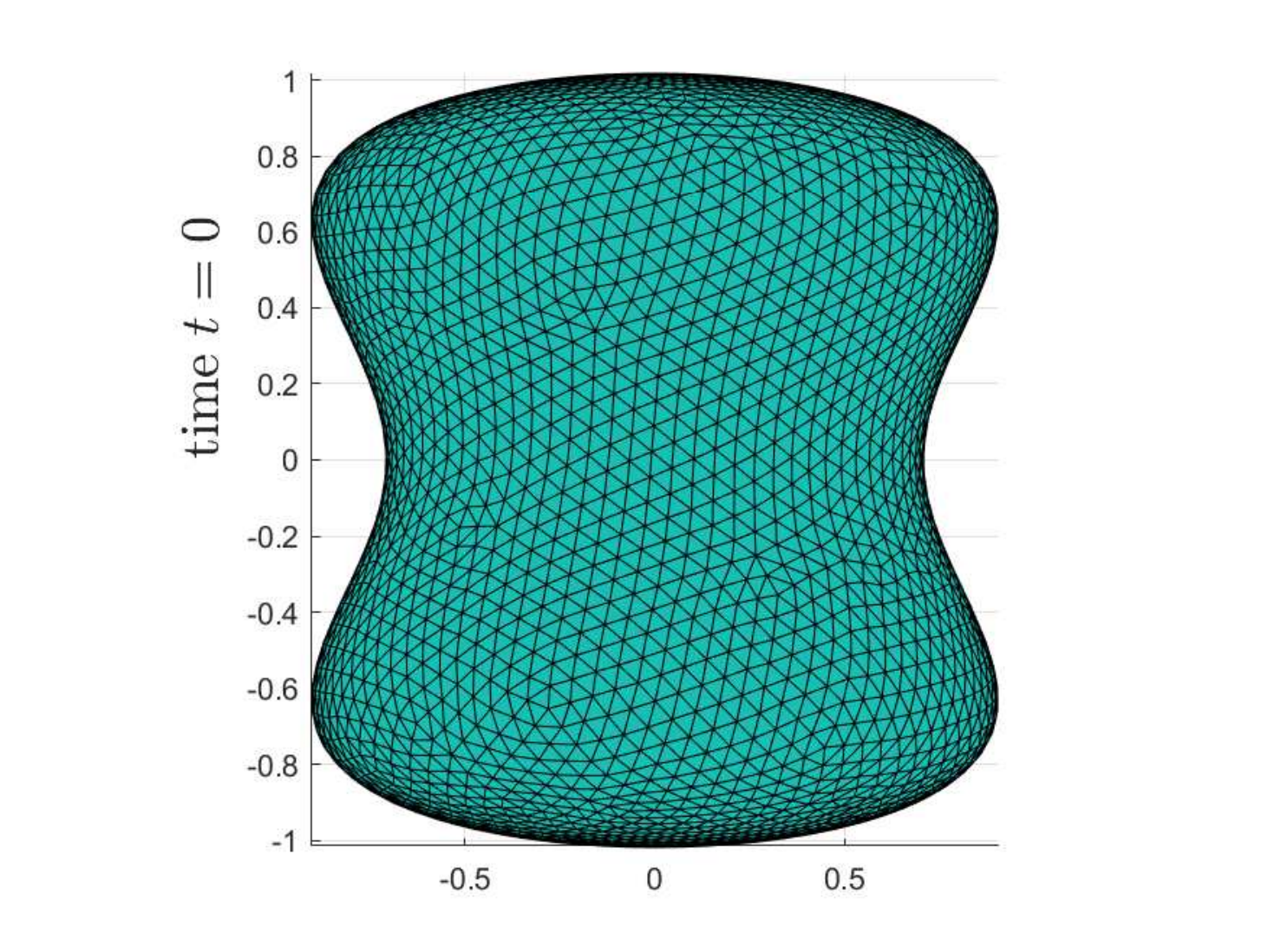} 
	\includegraphics[trim={60 20 90 20},clip,width=0.32\textwidth]{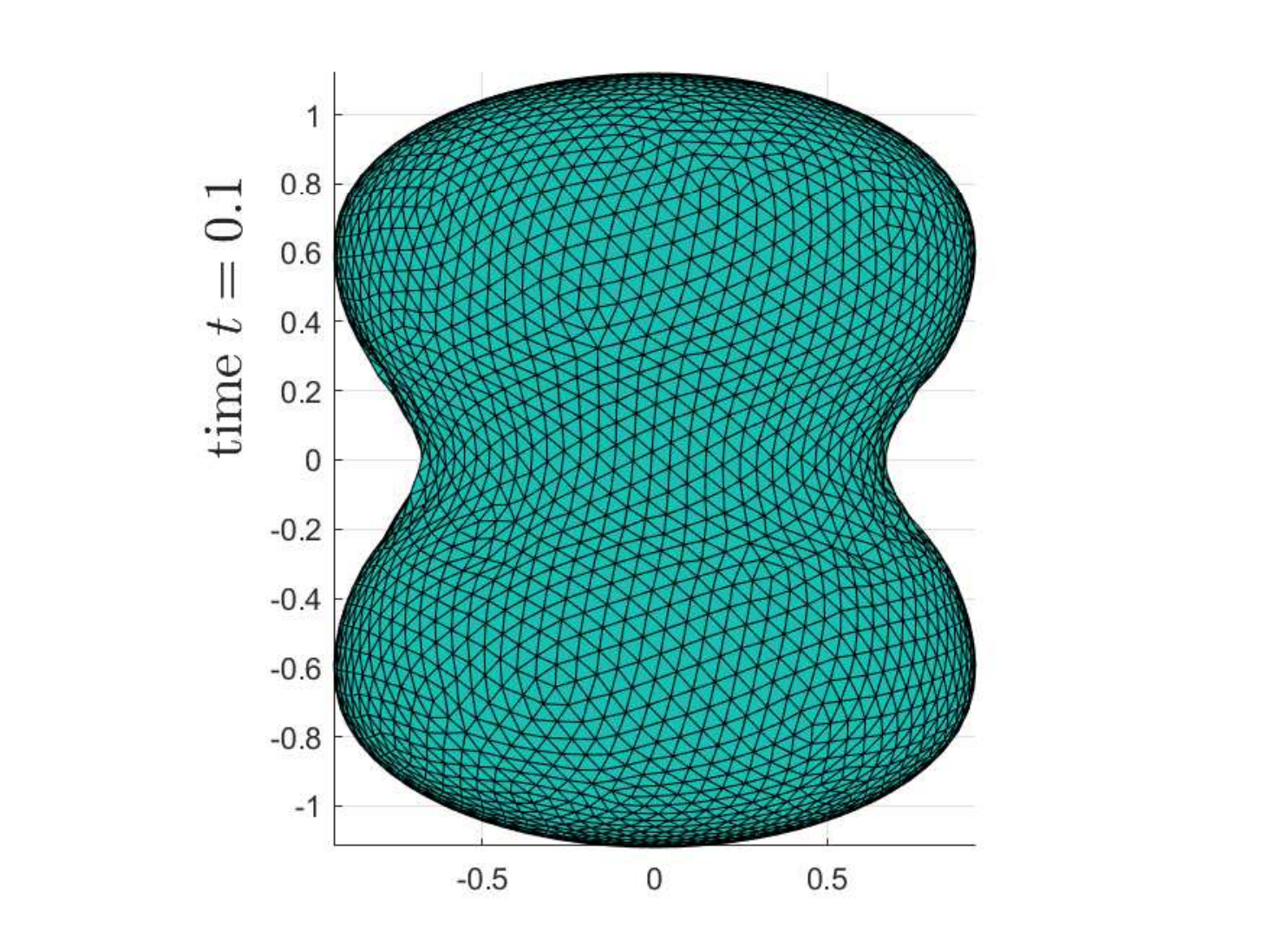} 
	\includegraphics[trim={60 20 90 20},clip,width=0.32\textwidth]{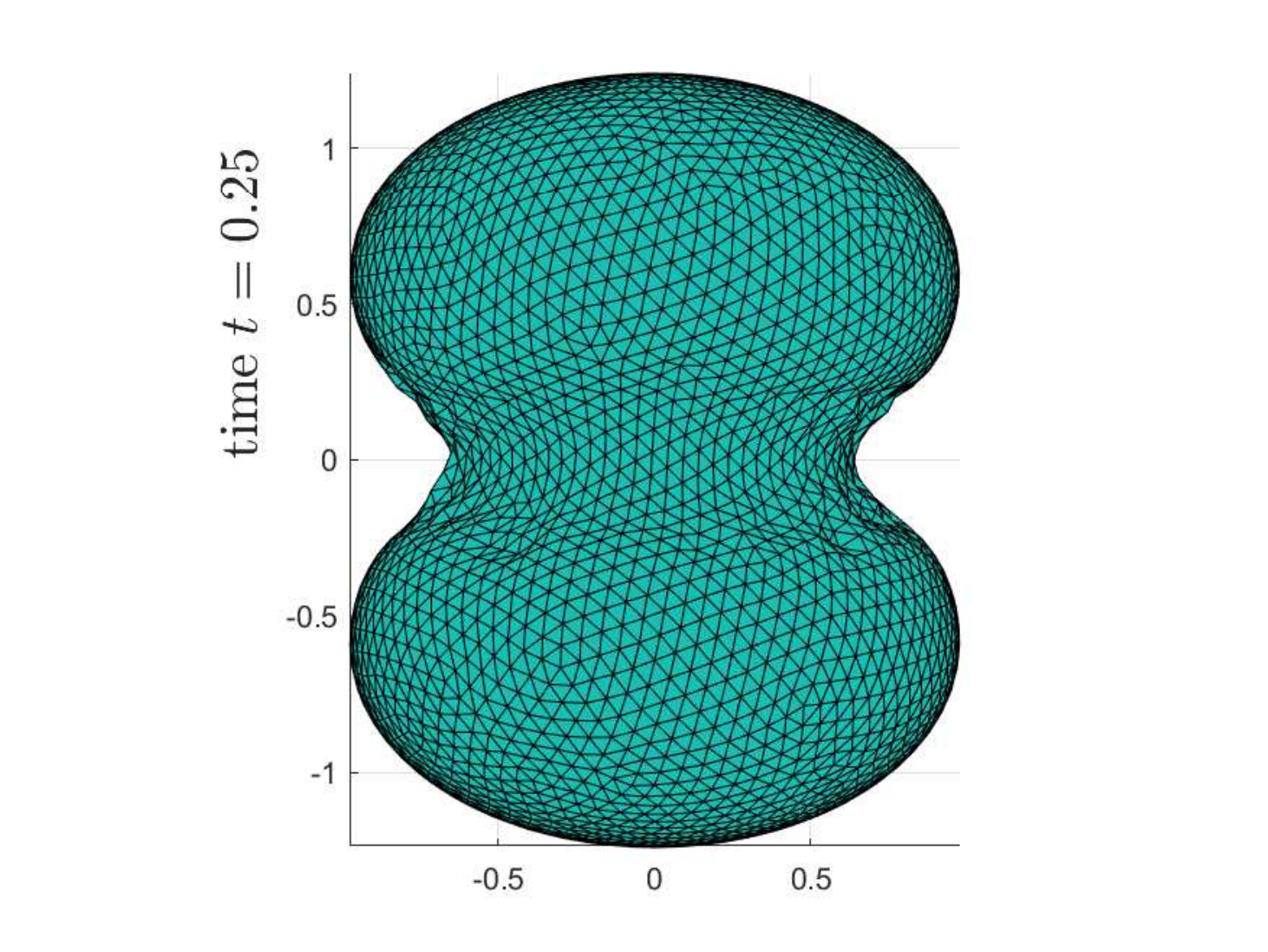} 
	\\
	\includegraphics[trim={60 20 100 20},clip,width=0.32\textwidth]{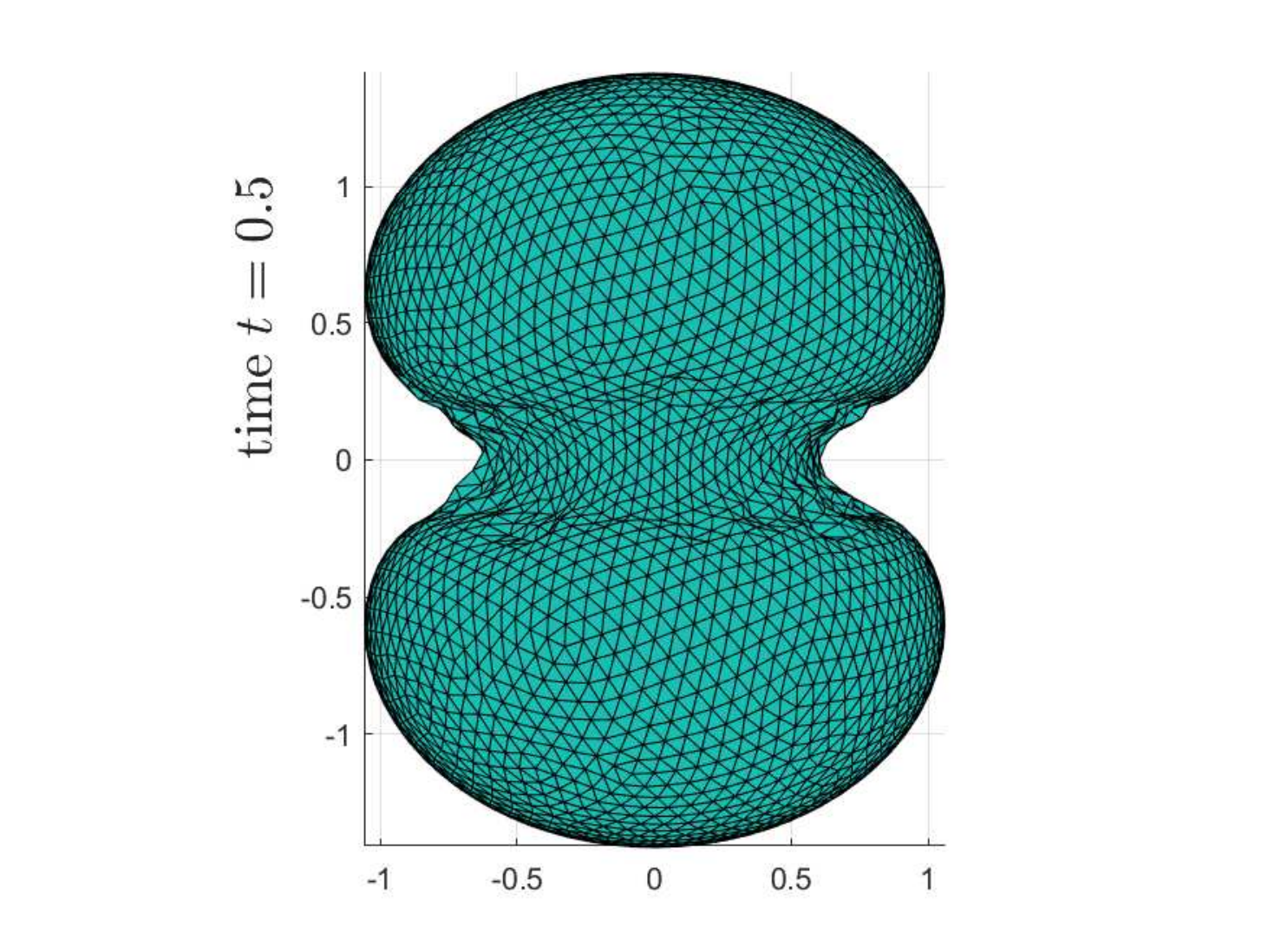} 
	\includegraphics[trim={60 20 110 20},clip,width=0.32\textwidth]{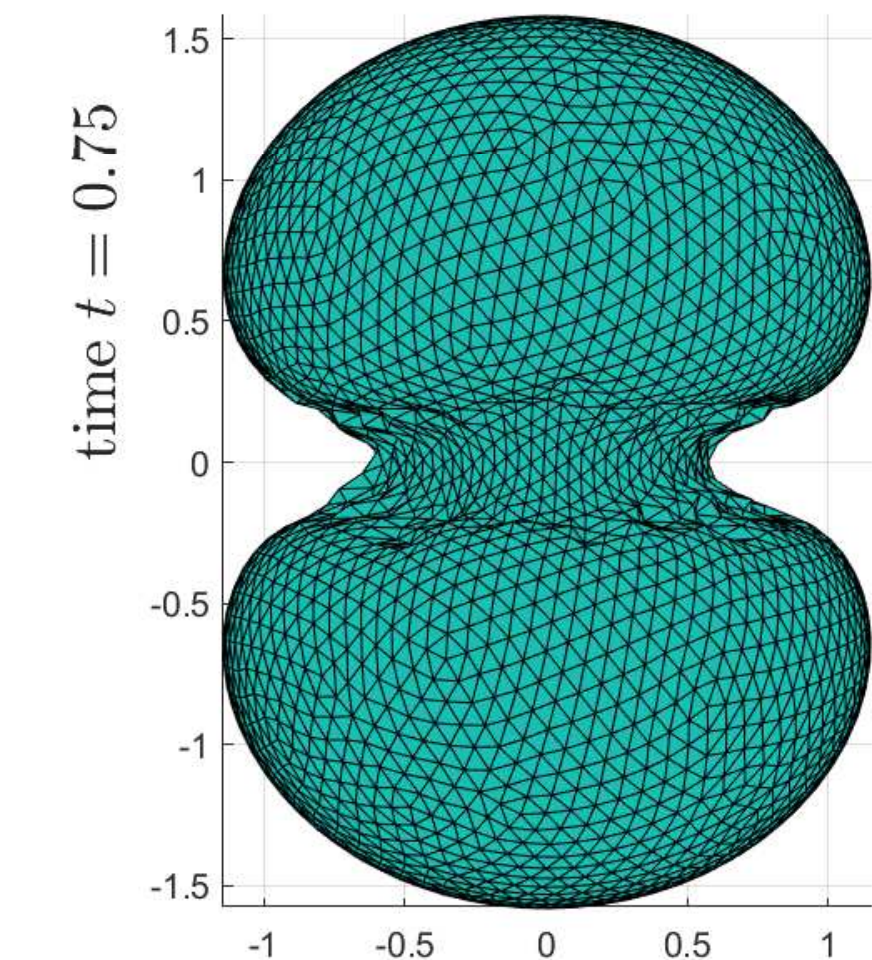} 
	\includegraphics[trim={60 20 110 20},clip,width=0.32\textwidth]{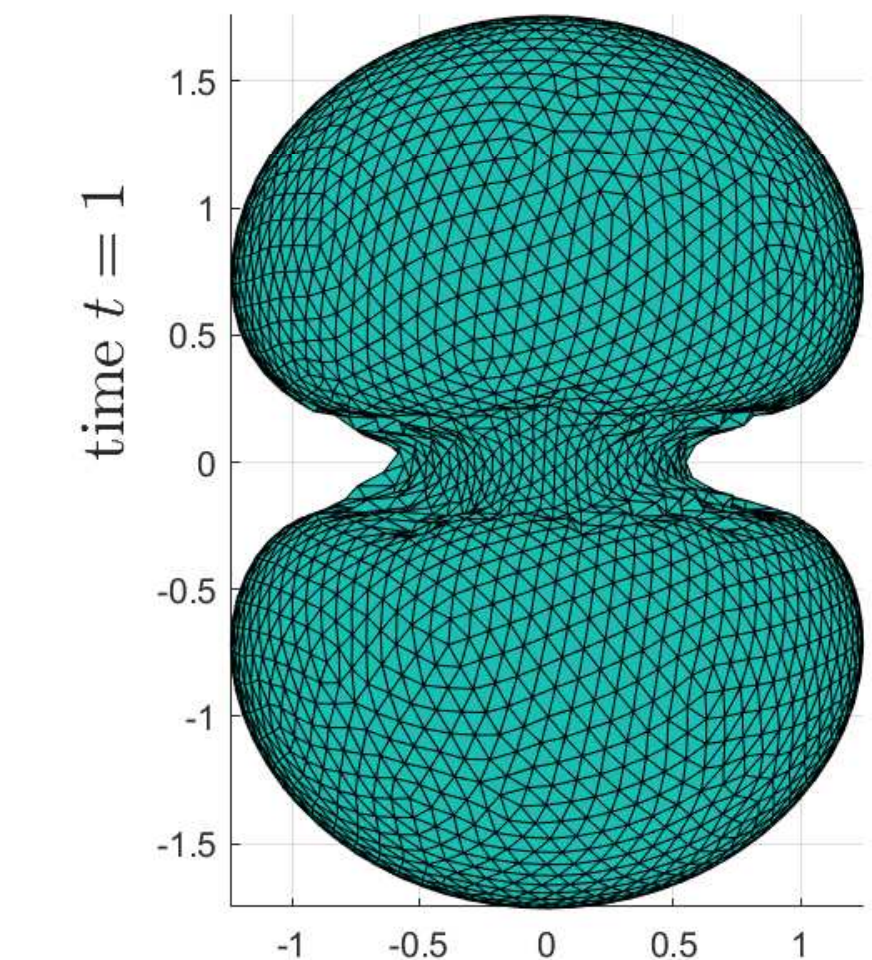} 
	\caption{Inverse mean curvature flow (side view) of a dumbbell in $\R^3$ at different times in $[0,1]$.}
	\label{fig:iMCF_dumbbell}
\end{figure}
	
\begin{figure}[htbp]
	\includegraphics[trim={60 20 90 20},clip,width=0.31\textwidth]{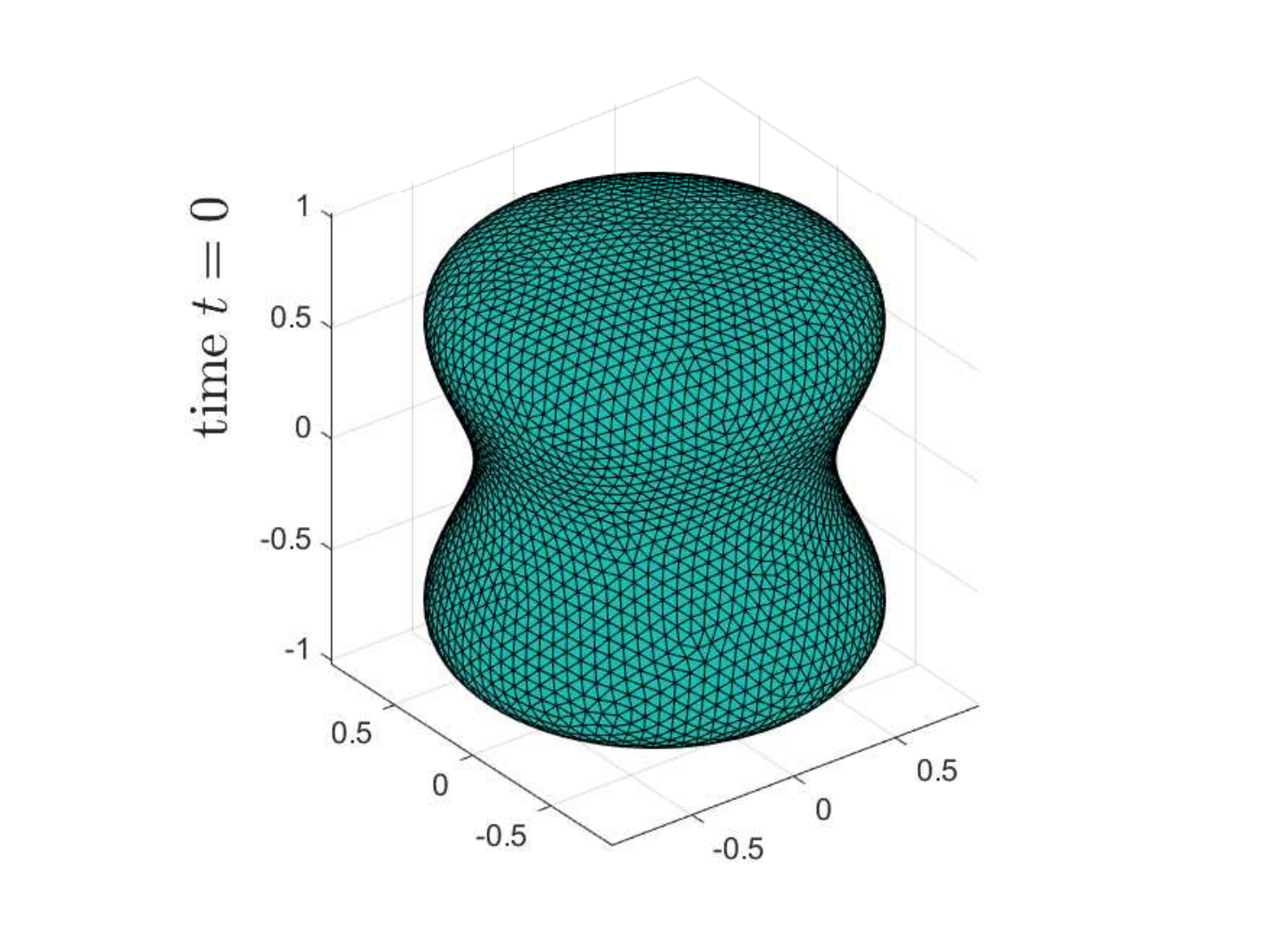} 
	\includegraphics[trim={60 20 90 15},clip,width=0.32\textwidth]{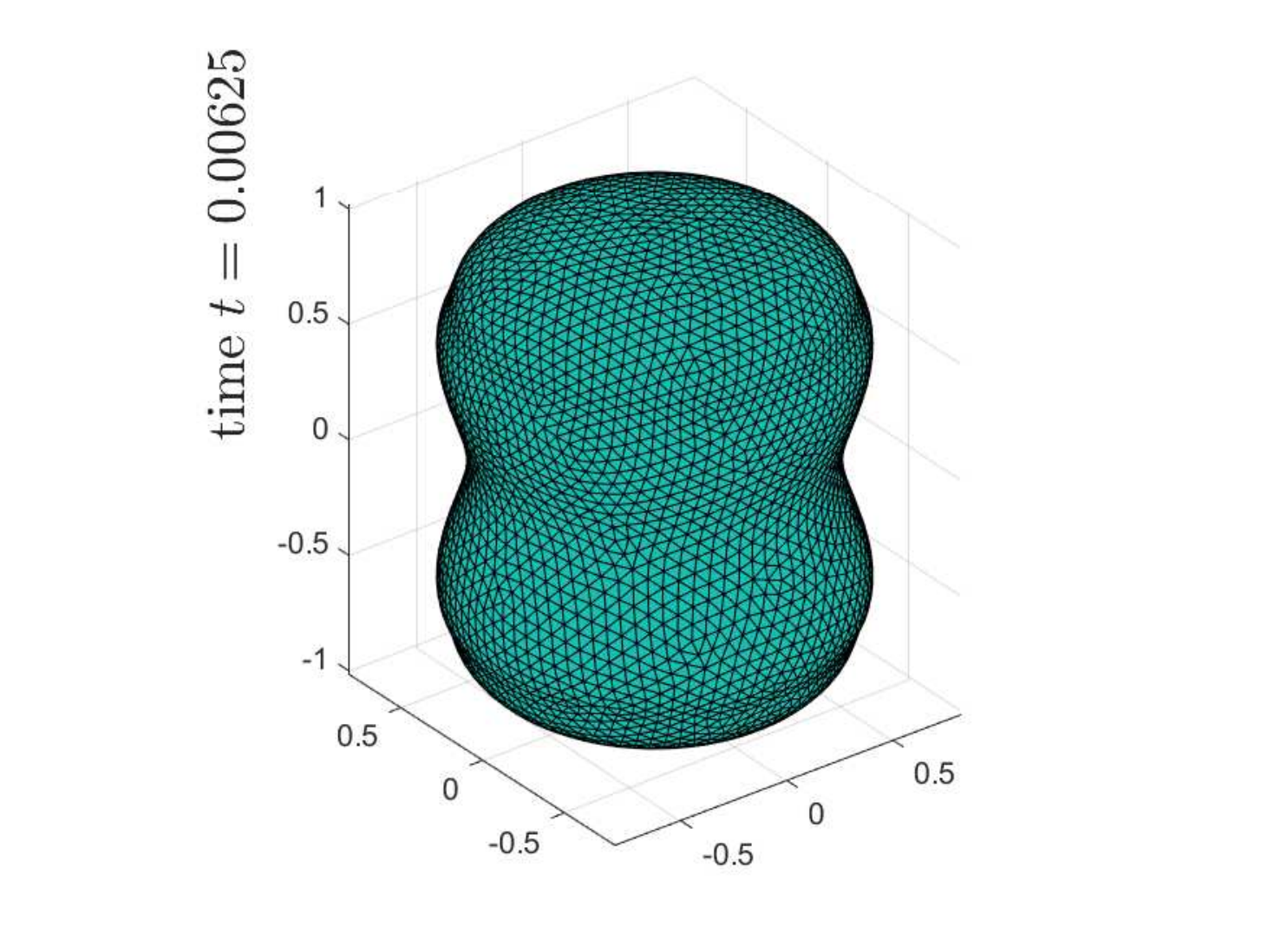}
	\includegraphics[trim={60 20 90 20},clip,width=0.32\textwidth]{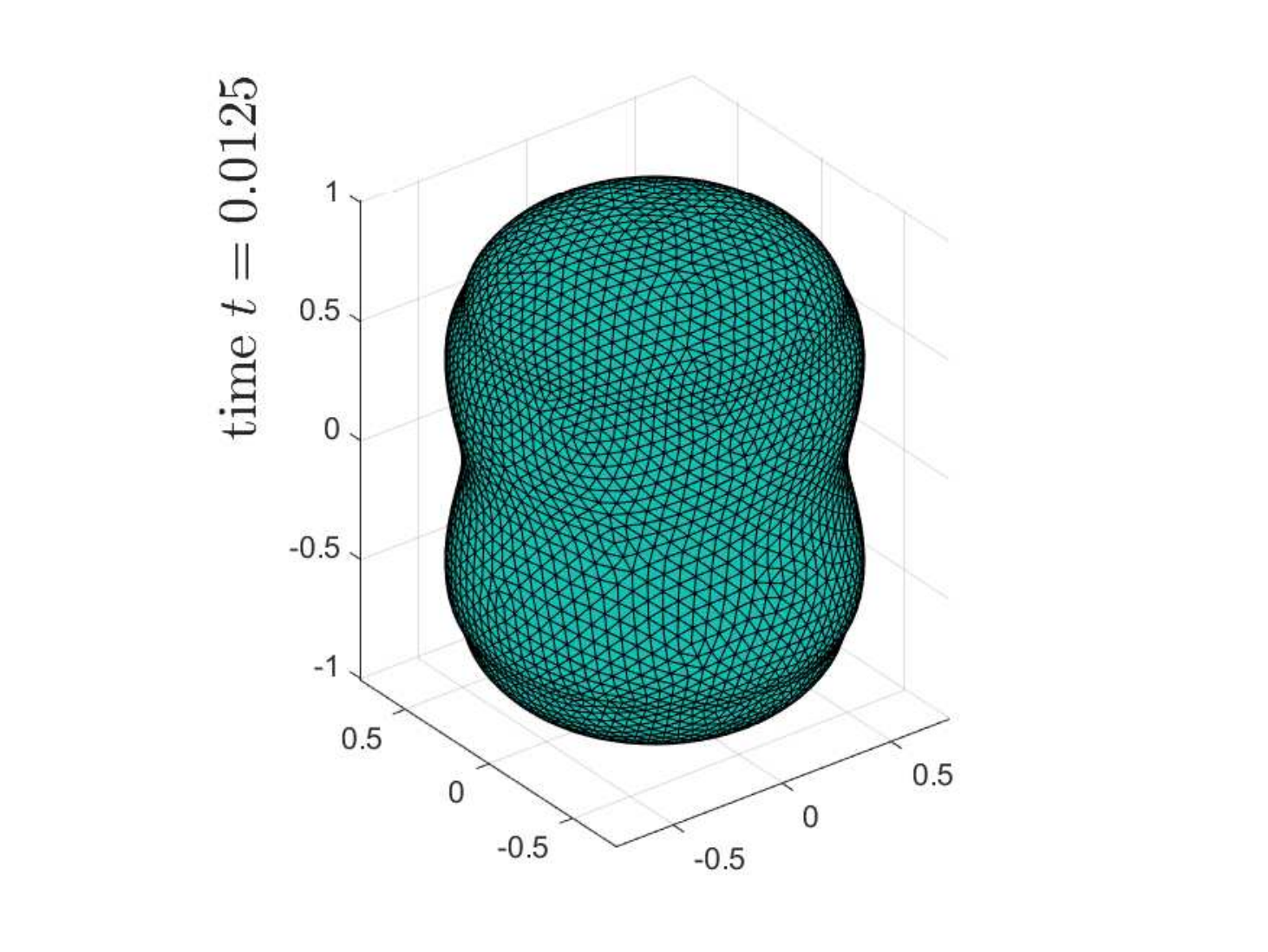}
	\\
	\includegraphics[trim={60 20 90 20},clip,width=0.32\textwidth]{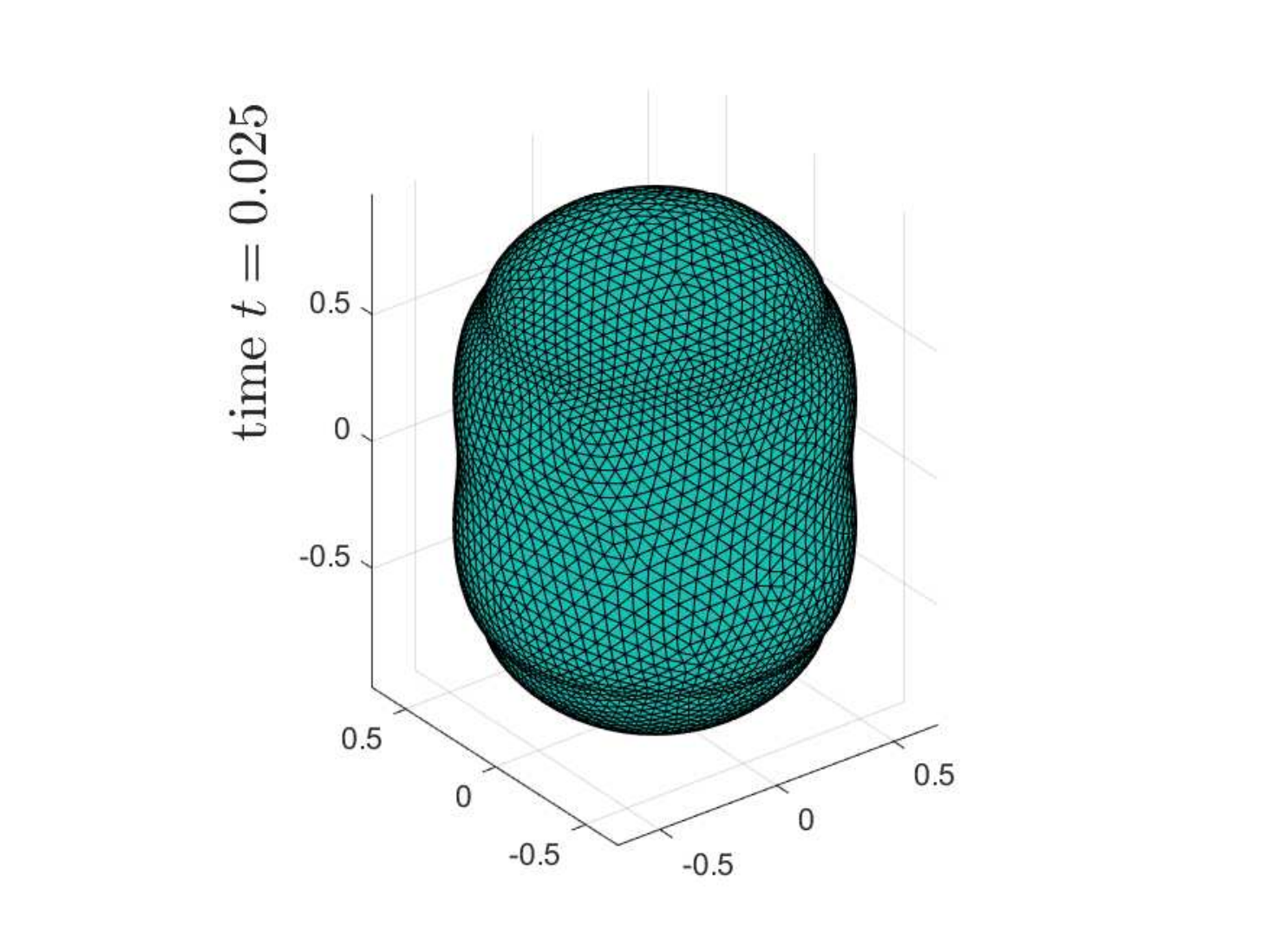}
	\includegraphics[trim={60 20 90 20},clip,width=0.32\textwidth]{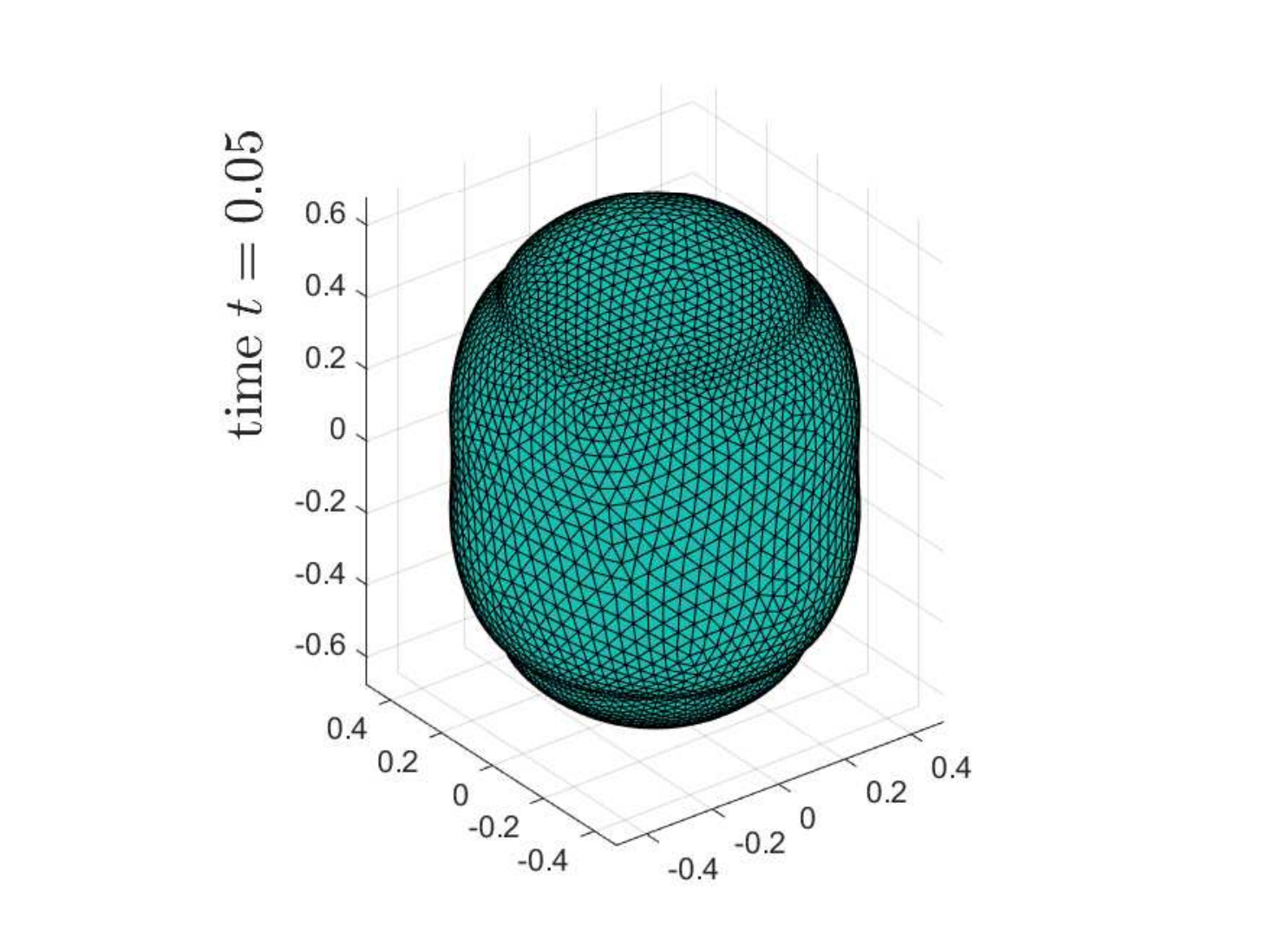}
	\includegraphics[trim={60 20 90 20},clip,width=0.32\textwidth]{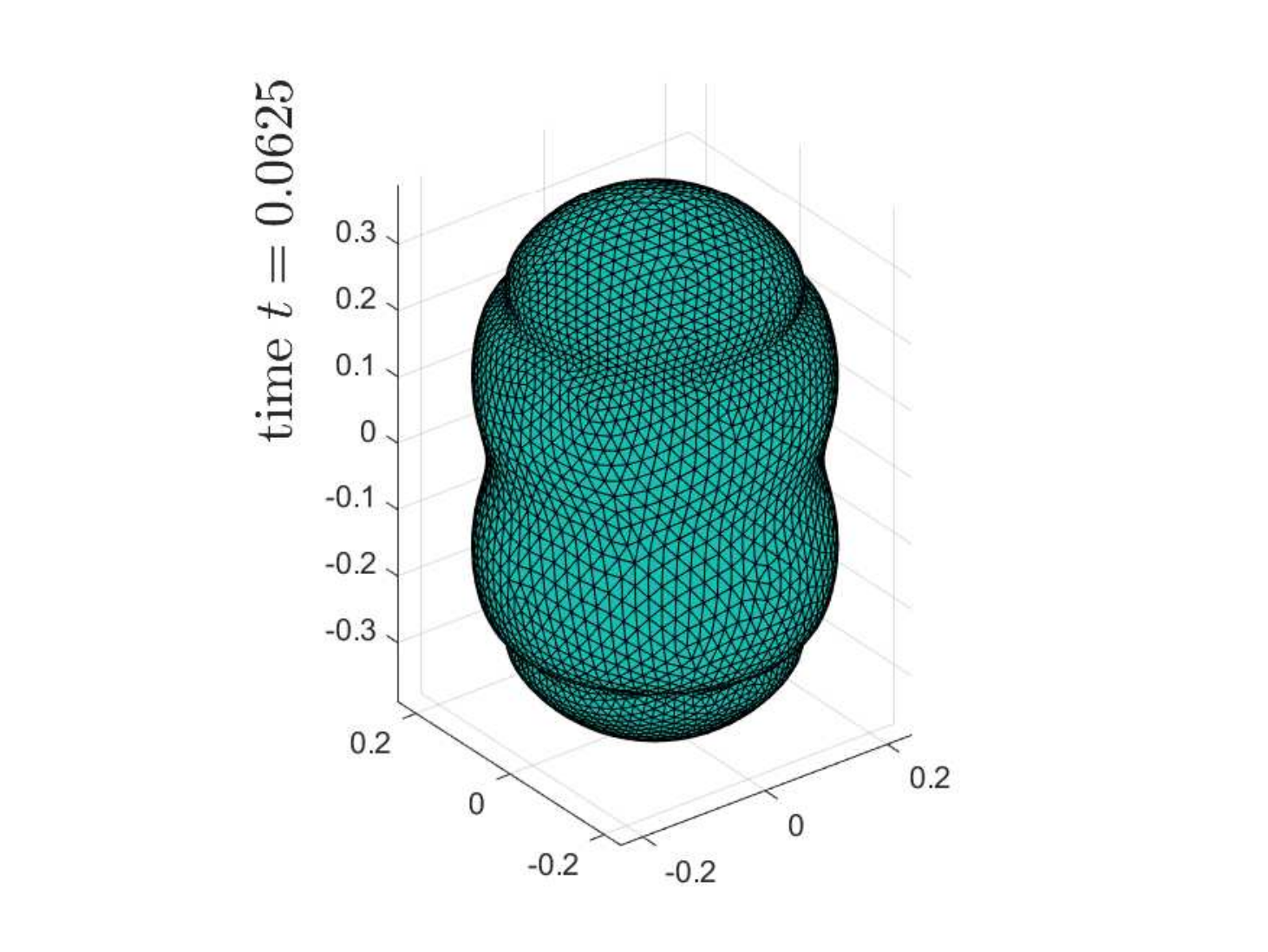}
	\caption{Powers of mean curvature flow with $\alpha = 2$ of a dumbbell in $\R^3$ at different times in $[0,0.0625]$.}
	\label{fig:MCFgen_dumbbell}
\end{figure}

%\pagebreak

\begin{figure}[htp]
	\includegraphics[trim={50 20 90 20},clip,width=0.31\textwidth]{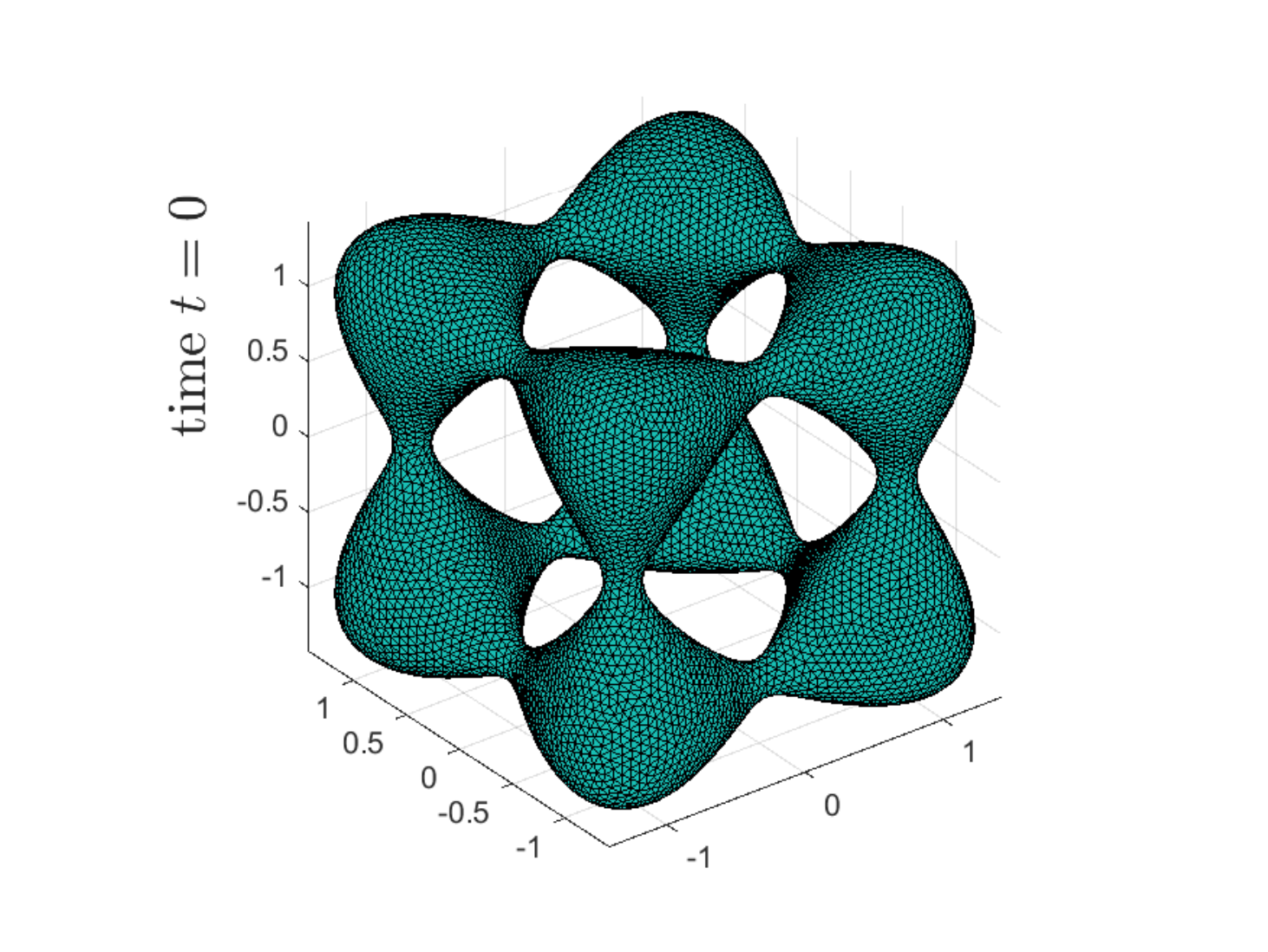} 
	\includegraphics[trim={50 20 90 20},clip,width=0.32\textwidth]{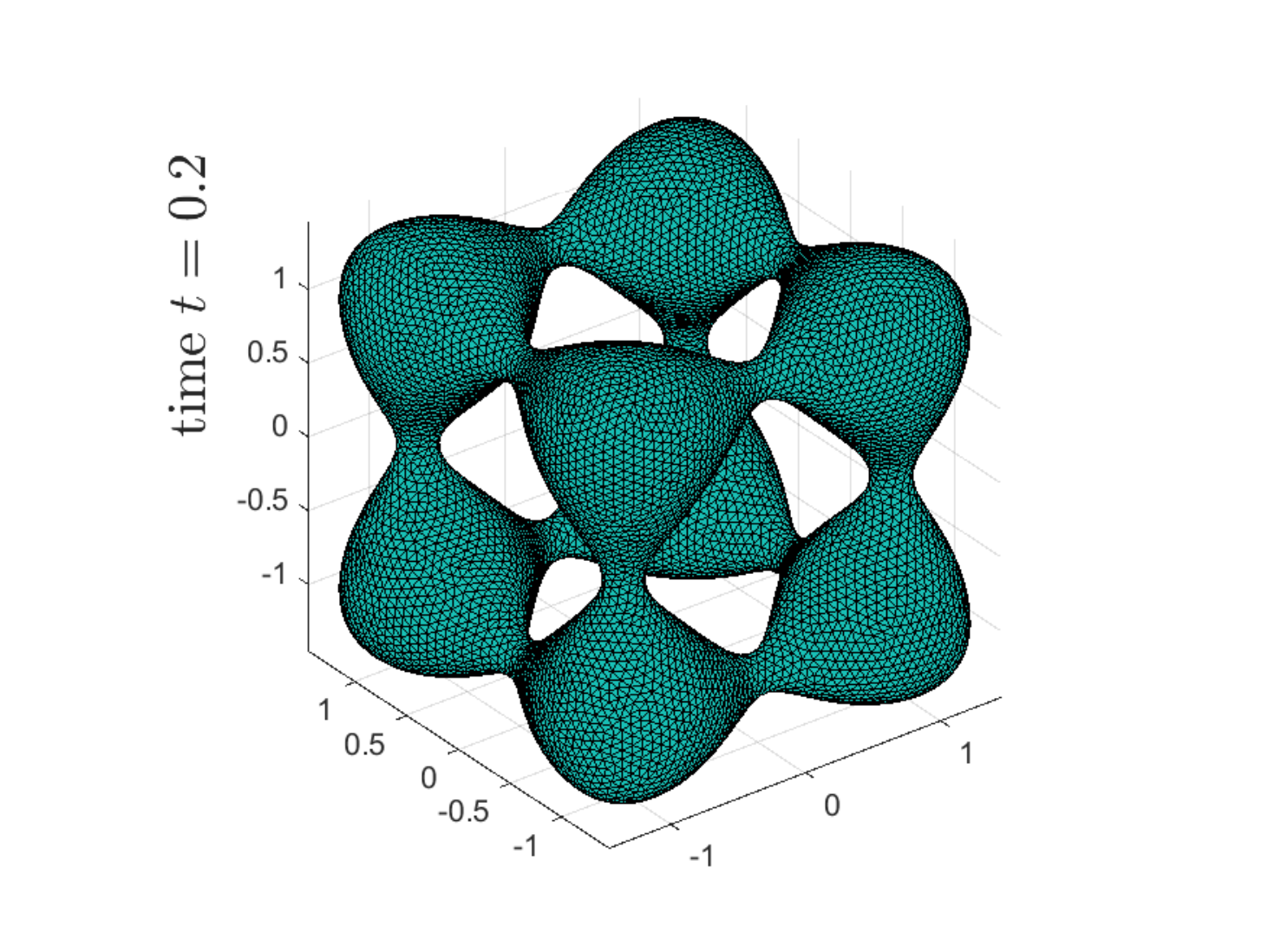} 
	\includegraphics[trim={50 20 90 20},clip,width=0.32\textwidth]{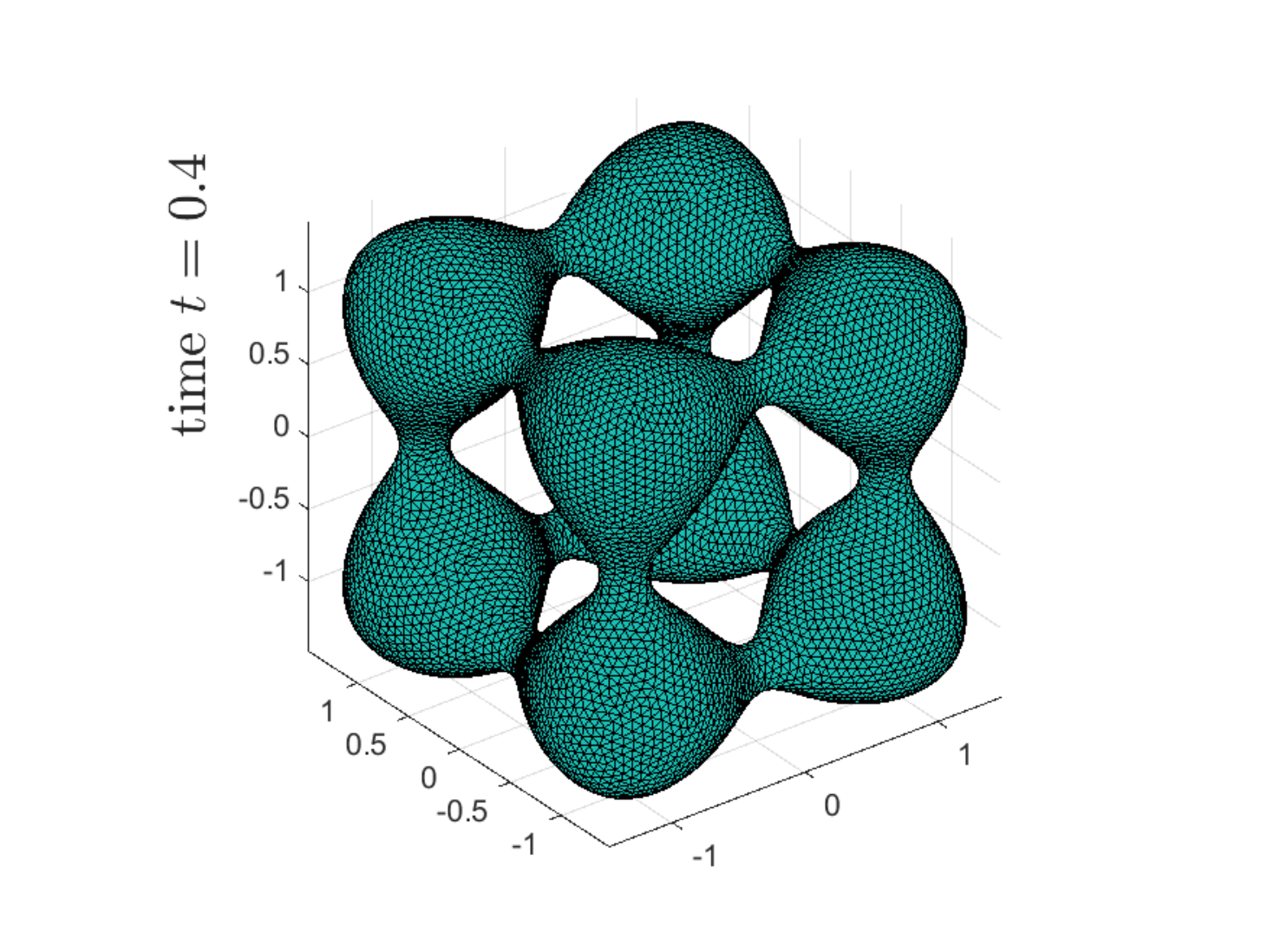} 
	\\
	\includegraphics[trim={50 20 90 20},clip,width=0.32\textwidth]{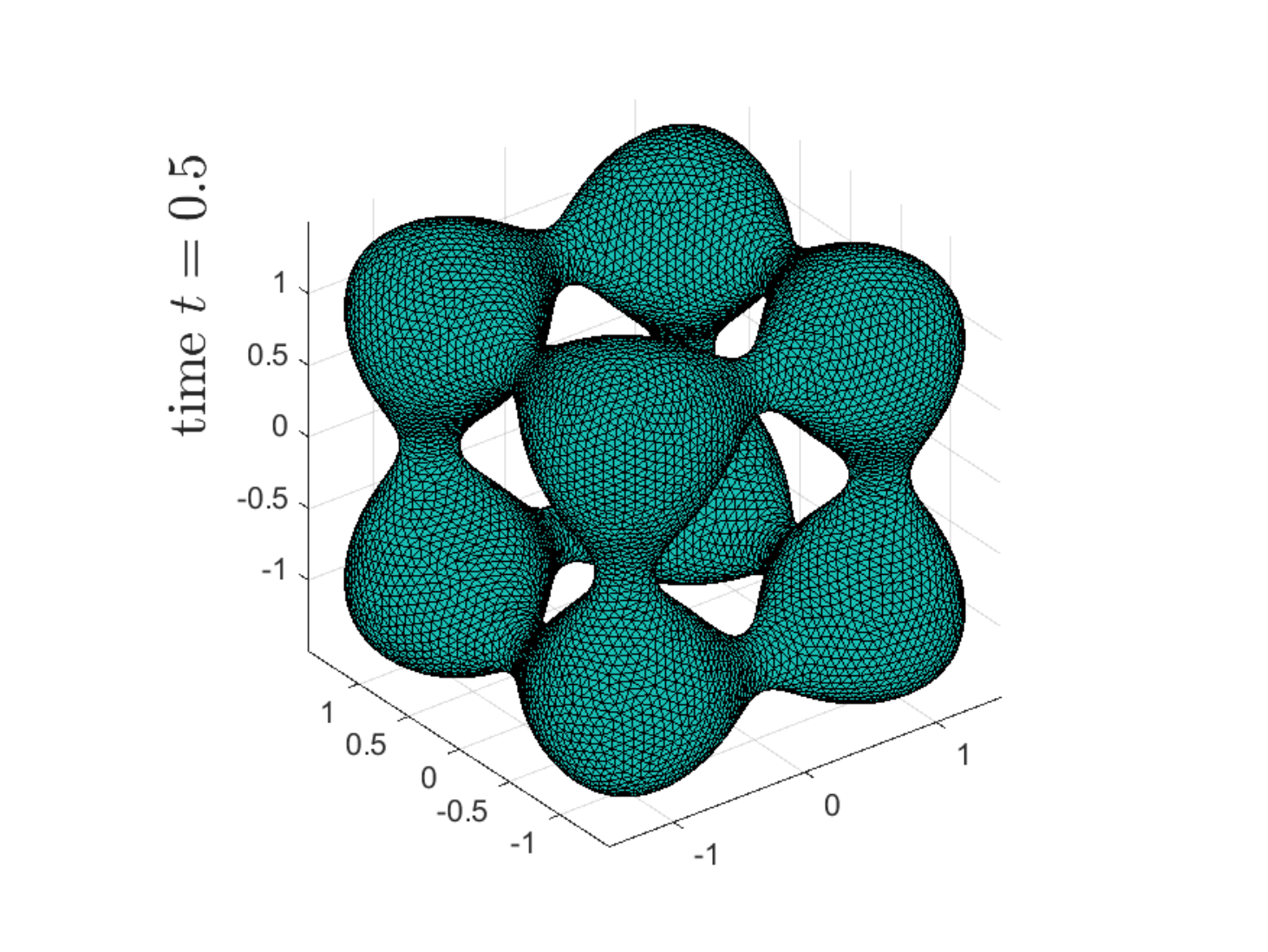} 
	\includegraphics[trim={50 20 90 20},clip,width=0.32\textwidth]{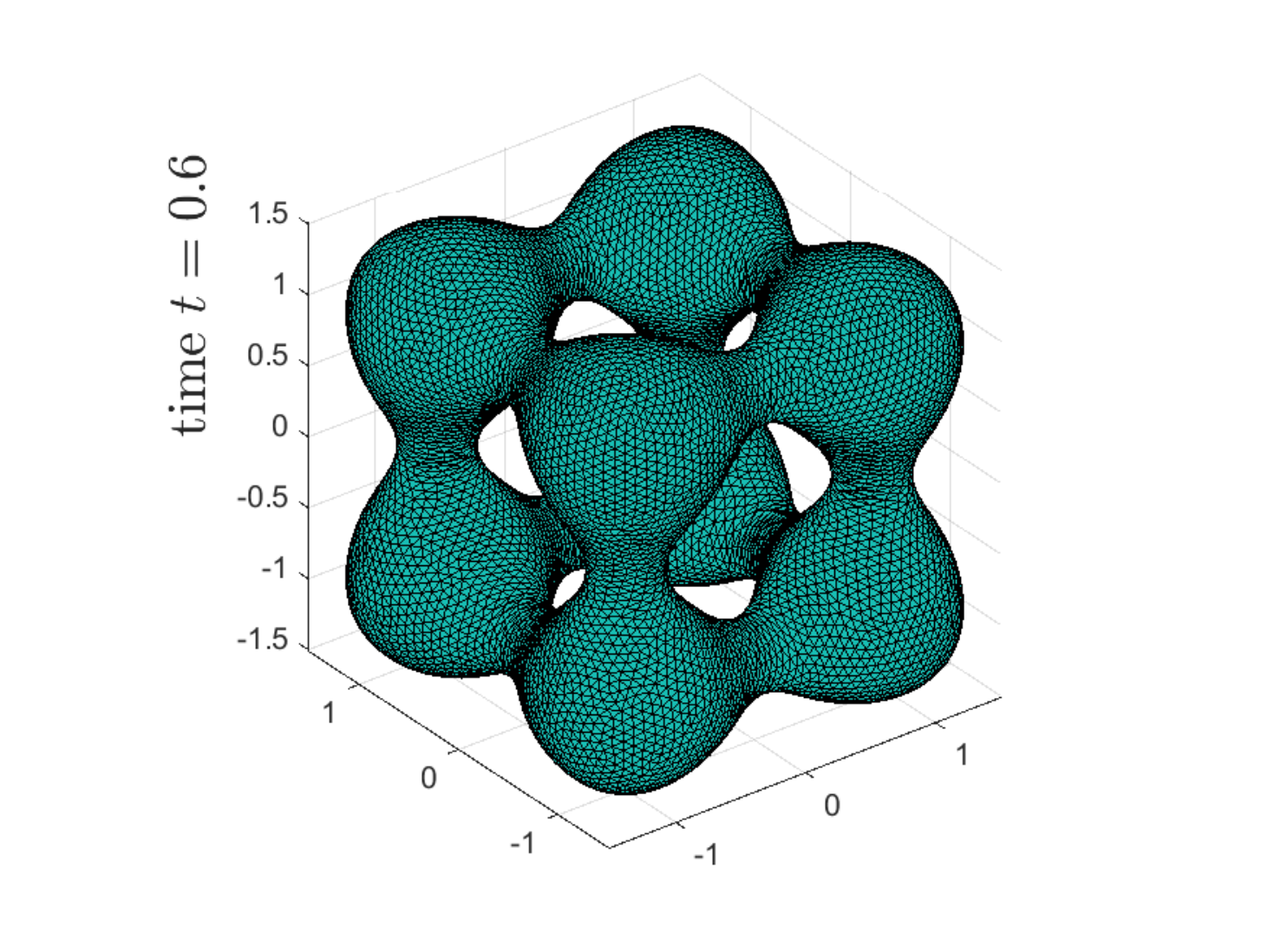} 
	\includegraphics[trim={50 20 90 20},clip,width=0.32\textwidth]{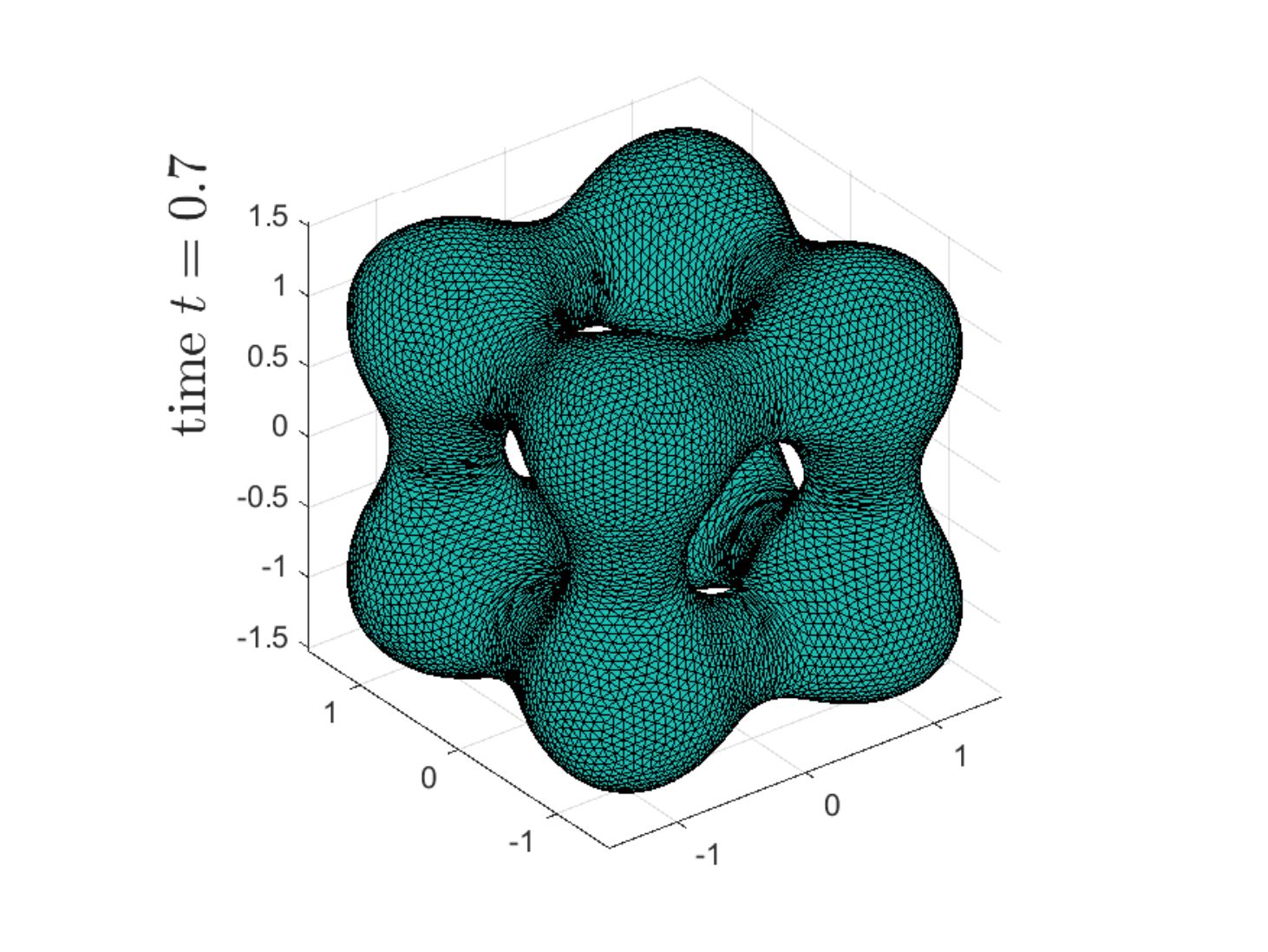} 
	\caption{Powers of inverse mean curvature flow with $\alpha = 2$ of a genus 5 surface in $\R^3$ at different times in $[0,0.7]$.}
	\label{fig:iMCFgen_genus5}
\end{figure}

\newpage

\section*{Acknowledgement}
We thank Simon Brendle and Christian Lubich for our inspiring discussions on the topic.

A significant portion of the manuscript was written when both authors were employed at the University of T\"{u}bingen. We gratefully acknowledge their support.

The work of Bal\'azs Kov\'acs is supported by the Deutsche Forschungsgemeinschaft (DFG, German Research Foundation) -- Project-ID 258734477 -- SFB 1173, and by the Heisenberg Programme of the Deutsche Forschungsgemeinschaft -- Project-ID 446431602.

\bibliographystyle{IMANUM-BIB}
\bibliography{MCF_literature}

%\clearpage
%
%\appendix
%
%
%\section*{Appendix A.}
%
%\btb
%Appendix löschen
%\etb 
%
%\begin{lemma}\label{lem.A.1}
%\btb 
%	For $a, b \geq c_0 > 0$
%\etb 	
%	 and $\gamma \in \R$ we obtain
%	\begin{equation*}
%	|a^\gamma - b^\gamma| 
%	\leq
%	\begin{cases}
%	C_\gamma \cdot |b|^{\gamma-1} \cdot |a-b| &\text{ for } \gamma < 0, \\
%	C_\gamma \cdot |a - b|^\gamma &\text{ for } \gamma \in (0,1],\\
%	C_\gamma \cdot (|a - b|^\gamma +|b|^{\gamma-1} \cdot |a-b|) &\text{ for } \gamma > 1.
%	\end{cases}
%	\end{equation*}
%	In particular, if $|a-b| \ll 1$ is small, it follows that $|a^\gamma - b^\gamma| \ll 1$ is small. 
%\end{lemma}
%\begin{proof}
%	Note that all functions in the statement are homogeneous of degree $\gamma$. 
%	So we assume without loss of generality that $b = 1$. 
%	By continuity of both sides and compactness it remains to consider the case $|a -1| < \varepsilon$ small, where the right-hand side becomes small and the case $a \to \infty$, where the left-hand side becomes large. 
%	We consider the three different cases.
%	\begin{enumerate}
%		\item[(i)] For large $a>1$ the term on the left-hand side growth like $a^\gamma \leq a$, since $\gamma < 0$. For $|a-1| < \varepsilon$ small
%		we have $|a^\gamma - 1| \leq C_\gamma |a-1|$.
%		\item[(ii)] For large $a>1$ the term on the left-hand side growth like 
%		$|a^\gamma| = |a|^\gamma$ for $\gamma > 0$.
%		For $|a-1| < \varepsilon$ small one obtains
%		$|a^\gamma -1| \leq C_\gamma |a-1|^\gamma$. 
%		\item[(iii)] The case $a > 1$ large is in (ii), whereas the case $|a-1|< \varepsilon$ is analogous to (i).  
%	\end{enumerate} 
%\end{proof}

\end{document}